\documentclass[reqno,12pt]{amsart} 
\usepackage{esint,mathrsfs,amsmath,amssymb,amsfonts}
\usepackage{verbatim} 
\usepackage{color}
\usepackage[hyperfootnotes=false]{hyperref}
\hypersetup{
  colorlinks,
  citecolor=blue,
  linkcolor=red,
  urlcolor=blue}
\usepackage[left=1.2in, right=1.2in, bottom=1.5in]{geometry}

\newtheorem{theorem}{Theorem}[section]
\newtheorem{lemma}[theorem]{Lemma}
\newtheorem{proposition}[theorem]{Proposition}

\theoremstyle{definition}

\newtheorem{claim}{Claim}
\theoremstyle{remark}
\newtheorem{remark}[theorem]{Remark}
\numberwithin{equation}{section}

\newcommand{\R}{\mathbb{R}}

\newcommand{\substep}[1]{\vspace{1ex}\noindent\textbf{#1}}

\begin{document}

\title[]{DIMENSION DROP FOR HARMONIC MEASURE ON AHLFORS REGULAR BOUNDARIES}

\author{Yingying Cai}
\address{Yingying Cai: Departament de Matemàtiques, Universitat Autònoma de Barcelona, Barcelona, Spain.}
\email{yingying.cai@uab.cat}
\author{Xavier Tolsa}
\address{Xavier Tolsa: ICREA, Barcelona, Departament de Matemàtiques, Universitat Autònoma de Barcelona, and Centre de Recerca Matemàtica, Barcelona, Catalonia.}
\email{xavier.tolsa@uab.cat}

\thanks{Y.C. and X.T. were supported by the European Research Council (ERC) under the European Union's Horizon 2020 research and innovation programme (grant agreement 101018680). Also partially supported by MICIU (Spain) under the grant PID2024-160507NB-I00. 
}

\begin{abstract}
We provide quantitative estimates for the dimension drop of harmonic measure. We show that for a domain $\Omega = \mathbb{R}^{n+1} \setminus E$ where $E$ is an $s$-Ahlfors regular compact set satisfying a uniform $L^2$-based non-flatness condition $\beta_2 \ge \delta_0$, the dimension of its harmonic measure is strictly less than $s$ for $s \in (n - c\delta_0^2, n]$. For planar domains, we establish an analogous quantitative threshold $s_0 = 1 - c\delta_0^2$ under Azzam's uniform non-flatness condition $\beta_\infty + \beta_{\operatorname{hole}} \ge \delta_0$.
\end{abstract}

\maketitle

\section{Introduction}

\subsection{Motivation}
The study of the relationship between harmonic measure and the geometry of the boundary is a classical topic in analysis, dating back to the work of the F. and M. Riesz \cite{Rs16}. In recent years, significant progress has been made in this area; see, e.g., \cite{A21,AHMMMTV16,AHMMT20}.


In the present paper, we focus on the dimension of harmonic measure. Let $\Omega \subset \mathbb{R}^{n+1}$ be a connected domain and $p \in \Omega$. Let $\omega^p$ denote the harmonic measure for $\Omega$ with pole $p$.  Recall that, for any Radon measure $\nu$ in $\mathbb{R}^{n+1}$, its (Hausdorff) dimension, denoted by $\dim \nu$, is defined by
$$\dim \nu = \inf \{ \dim_{\mathcal{H}} F : F \subset \mathbb{R}^{n+1} \text{ Borel},\, \nu(F^c) = 0 \},$$
where $\dim_{\mathcal{H}} F$ stands for the Hausdorff dimension of $F$.  Since $\Omega$ is connected, Harnack's inequality ensures that harmonic measures evaluated at different poles are mutually absolutely continuous. Consequently, the dimension of $\omega^p$ is independent of the choice of the pole $p$. For brevity, we will omit the superscript and simply write $\omega$.
A fundamental question is to compare $\dim \omega$ with the Hausdorff dimension of the boundary, $s = \dim_H \partial \Omega$. Naturally, one always has $\dim \omega \le s$. The behavior of $\dim \omega$ depends heavily on the dimension of the ambient space and the topology of the domain. In the planar case ($n=1$), Makarov \cite{M85} proved that for any simply connected domain, $\dim \omega = 1$, regardless of the dimension of the boundary. Later, Jones and Wolff \cite{JW88} extended this result to arbitrary planar domains, showing that $\dim \omega \le 1$. Wolff \cite{W93} sharpened this result by proving that $\omega$ must be concentrated on a set of $\sigma$-finite length.
In higher dimensions ($n \ge 2$), the situation is strictly different. Bourgain \cite{B87} proved that there exists a constant $\varepsilon_n > 0$ such that $\dim \omega \le n+1 - \varepsilon_n$ for any domain in $\mathbb{R}^{n+1}$. A natural guess would be that $\dim \omega \le n$ ($\varepsilon_n=1$ analogous to the planar case). However,  
Wolff \cite{W91} constructed ``snowflake'' domains in $\mathbb{R}^3$ (corresponding to $n=2$) where $\dim \omega>n$.
This construction was later generalized to arbitrary dimensions $n \ge 2$ by Lewis, Verchota, and Vogel \cite{LVV05}.
 A difficult open question in the area consists in finding the optimal value of the constant $\varepsilon_n$ such that $\dim \omega \le n+1-\varepsilon_n$ for any $\Omega \subset \mathbb{R}^{n+1}$. See for example \cite{J05}.

When the (Hausdorff) codimension of $\partial \Omega$ is not 1 or $\partial \Omega$ is of fractal type, many examples show that we may have $\dim \omega < \dim_{\mathcal{H}} \partial \Omega$. This is the so-called ``dimension drop'' for harmonic measure. 
This phenomenon was first observed by Carleson \cite{C85} for complements of Cantor sets and subsequently established for various classes of boundaries exhibiting self-similar or dynamical structures, such as Cantor repellers and attractors of iterated function systems; see, e.g., \cite{JW88, V91, V93,UZ02,B96,BZ15,GM05}.

In view of the results described above, it is natural to wonder if the dimension drop for harmonic measure occurs for more general, ``not dynamically generated'', subsets of $\mathbb{R}^{n+1}$ whose boundaries have fractional dimension, like domains with Ahlfors regular boundaries of fractional dimension. See Conjecture 1.9 from \cite{V22}. Recall that, given $s > 0$ and $C_0 > 1$, a compact set $E \subset \mathbb{R}^{n+1}$ is called $(s, C_0)$-Ahlfors regular if
\begin{equation}\label{ADR}
    C_0^{-1}r^s \le \mathcal{H}^s(E \cap B(x,r)) \le C_0r^s \quad \text{for all } x \in E \text{ and } 0 < r < \text{diam}(E).
\end{equation}

The question of whether dimension drop occurs for general Ahlfors regular boundaries depends subtly on the dimension $s$. Azzam \cite{A20} proved that for any $s$-Ahlfors regular boundary with $s \in (n, n+1)$, the dimension drop $\dim \omega < s$ always occurs. For the case $s \le n$ (codimension $\ge 1$), the situation is different. Using a compactness argument, Azzam \cite{A20} showed that dimension drop still occurs provided that the boundary satisfies an additional uniform non-flatness condition and $s$ is sufficiently close to $n$ (i.e., $s \in (s_0, n]$ for some threshold $s_0$ depending on the Ahlfors regularity and non-flatness constants).
On the other hand, for sufficiently small $s$, dimension drop does not always occur. David, Jeznach, and Julia \cite{DJJ23} constructed examples of $s$-Ahlfors regular sets in $\mathbb{R}^2$ for which the harmonic measure is absolutely continuous with respect to the $s$-dimensional Hausdorff measure, yielding $\dim \omega = s$. Specifically, they provided such examples contained in a line for $s \le 0.249$, and examples not contained in a line for $s$ up to slightly more than 0.4.
In the opposite direction, Tolsa \cite{T24} proved that for $s$-Ahlfors regular subsets of lines in the plane ($n=1$), if $s \in [1/2, 1)$, dimension drop always occurs.

In this paper, we focus on the quantitative aspect of Azzam's result. While \cite{A20} establishes the existence of a dimension drop for uniformly non-flat boundaries, the dependence of this drop on the underlying geometry remains implicit. Our main objective is to provide explicit and quantitative estimates in terms of the non-flatness parameter. For a recent independent work addressing this quantitative aspect, we refer the reader to the preprint \cite{P26}, which establishes bounds with an exponential dependence on the parameter; in contrast, our approach allows us to establish  quadratic bounds.

\subsection{Statement of main results} 
Throughout this paper, let $E \subset \mathbb{R}^{n+1}$ be a compact $(s, C_0)$-Ahlfors regular set. We consider the complementary domain $\Omega := \mathbb{R}^{n+1} \setminus E$ and denote by $\omega$ the associated harmonic measure with pole at infinity. We will work with the restriction of the $s$-dimensional Hausdorff measure to $E$, denoted by $\mu := \mathcal{H}^s|_E$.

To quantify the geometric non-flatness, we recall the classical Jones' beta numbers. For $x \in E$ and $0<r<\operatorname{diam}(E)$, we define the $L^\infty$-based non-flatness, the topological hole parameter, and the $L^2$-based beta number respectively by
\begin{align*}
    \beta_\infty(x,r) &:= \inf_L \sup_{y \in E \cap B(x,r)} \frac{\operatorname{dist}(y, L)}{r}, \\
    \beta_{\operatorname{hole}}(x,r) &:= \inf_L \sup_{z \in L \cap B(x,r)} \frac{\operatorname{dist}(z, E)}{r}, \\
    \beta_{2}(x,r) &:= \inf_L \left( \frac{1}{\mu(B(x,r))} \int_{B(x,r)} \left( \frac{\operatorname{dist}(y, L)}{r} \right)^2 \, d\mu(y) \right)^{1/2},
\end{align*}
where the infimum are taken over all $n$-dimensional affine planes $L \subset \mathbb{R}^{n+1}$.

\begin{theorem}\label{thm:main_higher_dim}Let $\Omega = \mathbb{R}^{n+1}\setminus E$ be a domain where $E$ is an $(s, C_0)$-Ahlfors regular compact set. Suppose that there exists $\delta_0 > 0$ such that$$\beta_{2}(x,r)\ge \delta_0, \quad \text{ for all } x\in E\text{ and }\, 0<r<\operatorname{diam}(E). \label{non-flat}$$Then there exists a constant $c > 0$, depending only on the capacity density constant  and the dimension $n$, such that for the threshold$$s_0 = n - c\delta_0^2,$$if $s \in (s_0, n]$, then $\dim \omega < s$.\end{theorem}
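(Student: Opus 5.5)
We follow Azzam's strategy \cite{A20} but replace the compactness step by an explicit estimate. The criterion we aim for is the standard one for dimension drop. It suffices to find constants $\varepsilon>0$, $\eta\in(0,1)$ and $k\in\mathbb{N}$ with the following property: for every ball $B=B(x,r)$ centered on $E$ there is a ball $B'=B(x',\eta^k r)\subset B$ centered on $E$ with
\[
\frac{\omega(B')}{\omega(B)}\ \ge\ (1+\varepsilon)\,\frac{\mu(B')}{\mu(B)} .
\]
Equivalently, the density $\omega/\mu$ is not almost constant at any scale. Iterating this along dyadic generations (a Bourgain/Carleson type argument with the entropy of $\omega$ relative to $\mu$), we get $\dim\omega\le s-c'\varepsilon^2$ for some $c'>0$, and in particular $\dim\omega<s$.

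To produce $\varepsilon$ we compare $\omega$ with $\mu$ through the function
\[
u(y)=\int |y-z|^{1-n}\,d\mu(z)-c\,G(y,\infty).
\]
More precisely, we use the $s$-Riesz-type potential $U_\mu(y)=\int|y-z|^{s-n-1}\,d\mu(z)$. Since the Green function with pole at infinity satisfies $\Delta G=\omega$ in the distributional sense, testing against a bump adapted to $B$ relates $\omega(B)$ to averages of $|\nabla G|$ on $\partial B$. The capacity density condition (uniform fatness of $E$, which holds because $s>n-1$) gives Hölder decay, Carleson estimates and the doubling property of $\omega$ with constants depending only on the CDC constant and $n$.

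The key computation is the following. If $s=n$ and $\omega$ were comparable to $\mu$ with almost constant density on $B$, then $G$ would be, up to small error, the single layer potential of $\mu$ minus a harmonic function. In that case $G$ would be an almost-solution of a two-phase problem with a vanishing jump in the normal derivative. Using the $L^2$ flatness coefficients, one expects to show that $\int_B|\nabla^2 G|^2$-type quantities (or the square function of the Riesz transform of $\mu$) control $\beta_2(x,r)^2\,\mu(B)$ from below. This is a Dorronsoro/Tolsa-type inequality:
\[
\beta_2(x,r)^2 \lesssim \frac{1}{\mu(B)}\int_{2B}\Big|\frac{d\omega}{d\mu}\Big/\frac{\omega(2B)}{\mu(2B)}-1\Big|^2 d\mu + (n-s).
\]
Hence $\beta_2\ge\delta_0$ forces the oscillation of the density to be at least $c\delta_0^2$ minus a term of size $n-s$. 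The $(n-s)$ term appears because for $s<n$ the measure $\mu$ is not $n$-dimensional. We must therefore quantify how much $\mathcal{H}^s$ deviates from an $n$-dimensional measure at scale $r$ over $k$ generations, and this deviation costs $\lesssim (n-s)k$. Choosing $k$ fixed and requiring $n-s<c\delta_0^2$ yields $\varepsilon\gtrsim\delta_0^2$, which gives the threshold $s_0=n-c\delta_0^2$.

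The main obstacle is the lower bound above: turning $L^2$ non-flatness into a definite $L^2$ oscillation of the density of $\omega$, linearly in $\delta_0^2$ and with no compactness. We would first try an integration by parts identity. Take a plane $L$ and test $\Delta G=\omega$ against $\varphi(y)\,\mathrm{dist}(y,L)^2$, or against quadratic polynomials localized to $B$. The $\mu$-side then produces $\int\mathrm{dist}(y,L)^2\,d\mu$, while the $\omega$-side is controlled by the density oscillation via Cauchy–Schwarz. The interior term $\int|\nabla G|^2$ is handled with Caccioppoli and the CDC. If this fails, the fallback is to bound the oscillation from below by the $L^2$ norm of $\nabla G$ minus its affine approximation, using the quantitative Riesz-transform/$\beta_2$ square-function estimates for Ahlfors regular measures.
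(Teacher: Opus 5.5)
Your reduction is fine: a definite jump of $\omega/\mu$ within a bounded number of generations gives $\sum\omega(P)^{1/2}\mu(P)^{1/2}\le\gamma\,\omega(Q)^{1/2}\mu(Q)^{1/2}$, and hence $\dim\omega<s$. This is exactly Lemma~\ref{lem:decay} and Section~\ref{11.28}. The gap is in producing the jump.

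Your displayed ``Dorronsoro/Tolsa-type'' inequality is a single-scale statement, and as such it is false. Let $E$ be a round sphere of radius $R$ in $\mathbb{R}^{n+1}$ with a cap of radius $\rho$ removed. Then $\Omega$ is connected and $E$ is $n$-Ahlfors regular with constants independent of $\rho$. As $\rho\to0$, $\omega$ tends to normalized surface measure. For $x$ far from the hole, $d\omega/d\mu$ converges uniformly on $B(x,R/2)$ to a constant. Yet $\beta_2(x,R/4)$ stays above an absolute constant, and $n-s=0$. Curvature at one scale need not create any oscillation of $\omega/\mu$ at that scale.

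Moreover, when $\omega\perp\mu$, which is the situation you are trying to prove, your right-hand side is trivially $\ge 1$. So only discrete ratios $\omega(P)/\mu(P)$ make sense. The integration by parts you suggest cannot give a lower bound either. Testing $\Delta G=\omega$ against $\varphi\,\operatorname{dist}(\cdot,L)^2$ and using the CDC bound of Lemma~\ref{lem:CDC-Green} only returns the trivial estimate $\int\varphi\,\operatorname{dist}(y,L)^2\,d\omega\lesssim r^2\,\omega(CB)$. Note also that a jump of size $1+c\delta_0^2$ within a fixed number $k$ of generations would give a dimension drop polynomial in $\delta_0$. That is far more than the paper's argument yields ($\gamma=1-c\,\ell_0^{m_0 s}$ with $m_0\gtrsim\delta_0^{-2}$).

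The missing idea is that non-flatness must be \emph{accumulated over many scales} and played against boundary terms. The paper argues by contradiction. Suppose $\omega(P)/\mu(P)$ stays within a factor $16$ of $\omega(Q)/\mu(Q)$ for all $P\in\mathcal{D}_{0,m_0}(Q)$. This comparability transfers $\beta_2\ge\delta_0$ for $\mu$ into lower bounds for $\beta_{2,\omega}^n$ on every cube (Lemmas~\ref{lower-bound} and \ref{lower-bound1}). Thus a window of $N$ generations contributes about $N\delta_0^2$ times the natural mass.

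The upper bound comes from Tolsa's square-function inequality (Theorem~\ref{thm:Tolsa_main}), applied to the mollified measure $\nu=\omega|_{3B_0}*\varphi_t$ so that $\theta_\nu^{n,*}=0$. Since $\mathcal{R}\omega=\nabla g$, Caccioppoli and the CDC bound for $g$ control the Riesz transform term by $n$-densities of $\omega$ at the bottom scale of the window. The term $\mathcal{P}_\nu(B_0)$ is controlled by the top scale (Lemma~\ref{lemma:key_estimate}). The averaging over windows in Subsection~\ref{11.57} then shows this is only possible if $N\delta_0^2\lesssim_{c_{\mathrm{cap}}}1$. Hence the jump is found within $m_0\sim\delta_0^{-2}$ generations, not within a fixed $k$.

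The threshold does not come from an additive $(n-s)$ error either. Converting $\Theta^n$ into $\Theta^s$ across the window costs $\ell_0^{-2N(n-s)}$, which forces $N\le 1/(n-s)$. Compatibility with $N\gtrsim\delta_0^{-2}$ is exactly $n-s\le c\,\delta_0^2$.

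Finally, the CDC gives H\"older decay and the Green function bound, but not doubling of $\omega$ with constants depending only on $c_{\mathrm{cap}}$ and $n$. The paper never uses doubling; it gets comparability inside the window from the contradiction hypothesis.
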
Next, we consider the planar case.

\begin{theorem}\label{thm:main_planar}Let $\Omega = \mathbb{R}^{2}\setminus E$ be a domain where $E$ is an $(s, C_0)$-Ahlfors regular compact set. Suppose that there exists $\delta_0 > 0$ such that
\begin{equation}
\beta_\infty(x,r)+ \beta_{\operatorname{hole}}(x,r)\ge \delta_0,  \quad \text{ for all } x\in E\text{ and }\, 0<r<\operatorname{diam}(E). \label{eq:non-flat_planar}
\end{equation}
Then there exists a constant $c > 0$, depending  on $C_0$, such that for the threshold$$s_0 = 1 - c\delta_0^2,$$if $s \in (s_0, 1]$, then $\dim \omega < s$.\end{theorem}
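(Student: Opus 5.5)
We follow Azzam's strategy from \cite{A20}, replacing the compactness step by an explicit computation. Dimension drop follows from a criterion of Bourgain/Batakis type. Let $g$ be the Green function with pole at infinity, and consider the quantity
\[
\Phi(x,r)=\log\frac{\omega(B(x,r))}{r^s}.
\]
If there are $\eta>0$ and $M>1$ such that for every $x\in E$ and $r$ the average of $\Phi$ drops by at least $\eta$ from scale $r$ to scale $M^{-1}r$ in an $\omega$-averaged sense, then $\dim\omega\le s-\eta/\log M<s$. Equivalently, it suffices to show
\[
\sum_{Q'\in\mathcal D_{k}(Q)}\omega(Q')\log\frac{\omega(Q')\,\mu(Q)}{\omega(Q)\,\mu(Q')}\;\ge\;\eta\,\omega(Q)
\]
uniformly over dyadic cubes $Q$, where $\mathcal D_k(Q)$ is the set of descendants of $Q$ of $k$ generations down. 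This is a relative-entropy (Kullback--Leibler) lower bound for the normalized restriction of $\omega$ against $\mu$. By Pinsker's inequality, the bound follows once we show that $\omega_Q:=\omega|_Q/\omega(Q)$ and $\mu_Q:=\mu|_Q/\mu(Q)$ differ in total variation by at least $c'\delta_0$ at some fixed number of generations. Then $\eta\gtrsim\delta_0^2$, which is the source of the quadratic dependence.

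To produce the total variation gap we use a Riesz-transform/divergence identity. Set $u=g$ near $Q$, rescaled so that $Q$ has unit size, and note that $\omega=-\Delta g$ distributionally. Take a test function $\varphi$ that is affine in the direction normal to a best-approximating plane $L$ and cut off near $B_Q$. Green's formula then gives
\[
\int\varphi\,d\omega=\int g\,\Delta\varphi,
\]
which is controlled by the capacity density condition (CDC) and boundary Hölder continuity of $g$. Comparing with $\int\varphi\,d\mu$, the $\beta_2\ge\delta_0$ hypothesis forces $\mu$ to have second moment about $L$ of order $\delta_0^2$. The key computation is that if $\omega_Q$ and $\mu_Q$ were $\varepsilon$-close in total variation, then $\omega$ would be close to a multiple of $\mathcal H^s|_E$. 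On the other hand, $g$ vanishes on $E$, and the flux of $\nabla g$ through a quadratic test function $\psi(y)=\operatorname{dist}(y,L)^2$ relates $\int\psi\,d\omega$ to $\int g\,\Delta\psi=2\int g$. When $s$ is close to $n$, the monotonicity-type/Almgren estimates (or a direct comparison of $g$ with $\operatorname{dist}(\cdot,E)^{n+1-s}$ behaviour) show that $\int\psi\,d\omega$ is forced to be $\lesssim (n-s)+\varepsilon$. Meanwhile $\int\psi\,d\mu\gtrsim\delta_0^2$, so $\varepsilon\gtrsim\delta_0^2-C(n-s)$. For $n-s<c\delta_0^2$ this gives a gap, and hence $\eta\gtrsim\delta_0^4$ or $\delta_0^2$. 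To obtain exactly $\delta_0^2$, apply the KL bound directly via a $\chi^2$/second-moment inequality rather than Pinsker, since the discrepancy is already measured in $L^2$.

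In the planar case, $\beta_\infty+\beta_{\rm hole}\ge\delta_0$ plays the role of $\beta_2\ge\delta_0$. By Ahlfors regularity, a large $\beta_\infty$ at scale $r$ yields a large $\beta_2$ at a comparable scale, with constants depending on $C_0$. A large $\beta_{\rm hole}$ means there is a hole of size $\delta_0r$ along the line, and harmonic measure leaks through it; this makes $\omega$ and $\mu$ visibly different via the same test-function argument. The remaining steps then repeat those of Theorem \ref{thm:main_higher_dim}.

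The main obstacle is the middle step: proving, with constants linear in $n-s$ and without compactness, that harmonic measure of an $s$-regular set with $s\approx n$ cannot have a large quadratic moment about $L$ unless it differs from $\mu$. I would first try the Green-formula identity $\int\psi\,d\omega=2\int g\,d\mathcal L^{n+1}$, combined with the CDC to bound $g$ from above near $E$.
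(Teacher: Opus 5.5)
\textbf{There is a genuine gap: the discrepancy step fails, and the planar reduction fails on both branches.}

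Your relative-entropy criterion is a legitimate substitute for the paper's decay of $\sum_P(\omega(P)\mu(P))^{1/2}$. The problem is the step meant to produce a discrepancy between $\omega$ and $\mu$, the one you flag yourself. It does not work, and no single-scale argument of that type can.

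Green's identity $\int\psi\phi\,d\omega=\int g\,\Delta(\psi\phi)$, combined with the CDC bound $g\lesssim\omega(B(x,Cr))/r^{n-1}$ on $B(x,2r)$, only gives $\int(\operatorname{dist}(\cdot,L)/r)^2\phi\,d\omega\lesssim\omega(B(x,Cr))$. That is trivial, and no factor $n-s$ appears anywhere. In fact the claimed bound $\int\psi\,d\omega\lesssim(n-s)+\varepsilon$ is false. For the unit circle ($s=n=1$), harmonic measure from infinity is normalized arclength, so $\varepsilon=0$ at every depth, yet the second moment of $\omega$ about any line at unit scale is of order one.

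Flatness of $\omega$ can only be forced \emph{on average over many scales}, through a square-function estimate. This is the paper's mechanism:
\begin{itemize}
\item One assumes that $\omega(P)/\mu(P)$ stays within a factor $16$ of $\omega(\tilde Q_0)/\mu(\tilde Q_0)$ for \emph{all} $P\in\mathcal D_{0,m_0}(\tilde Q_0)$. This multiscale comparability is much stronger than total-variation closeness at one depth. It is exactly what allows the non-flatness of $\mu$ to be transferred to $\omega$ at every scale via Whitney cubes (Lemmas \ref{lower-bound} and \ref{lower-bound1}).
\item Consequently the $\beta_{2,\omega}^n$ square function over a window of $N$ generations is bounded below by $\delta_0^2$ times the sum of $\Theta^s_\omega(P)^2\omega(P)$ over all $N+1$ generations.
\item Theorem \ref{thm:Tolsa_main}, applied to $\omega|_{3B_0}*\varphi_t$ with $\mathcal R\omega=\nabla g$, bounds the same square function by such terms at the top and bottom generations only (Lemma \ref{lemma:key_estimate}). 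This is the only place your ingredients (Green function, Caccioppoli, CDC) enter.
\item Converting $\Theta^n$ into $\Theta^s$ across the window costs $\ell_0^{-2N(n-s)}$, which forces $N\le 1/(n-s)$. The averaging in Subsection \ref{11.57} removes the dependence on $C_0$.
\end{itemize}
The quadratic threshold is the compatibility condition $\delta_0^{-2}\lesssim N\le (n-s)^{-1}$. Pinsker's inequality and the size of the drop $\eta$ play no role in $s_0$.

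Your planar reduction also breaks on both branches of $\beta_\infty+\beta_{\operatorname{hole}}\ge\delta_0$.

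When $\beta_\infty$ is small, $E\cap B(x,r)$ lies in a thin strip around a line $L$. Then $\int\operatorname{dist}(\cdot,L)^2\,d\omega$ and $\int\operatorname{dist}(\cdot,L)^2\,d\mu$ are both $O(\beta_\infty^2)$, so a second-moment test function cannot see the hole at all. The paper needs a different mechanism here, Lemma \ref{6.46}. By compactness, the blow-up at the edge of a hole is a set contained in a line with a gap. For such a set, $\widetilde\omega^{x_0}(B(0,r))\gtrsim r^{1/2}$ (slit-tip behaviour, see \cite{T24}). The absence of a jump, together with the change of pole formula, gives $\lesssim Mr^s$ with $s\ge 2/3$, which is a contradiction.

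When $\beta_\infty$ is large, Ahlfors regularity does not give a comparably large $\beta_2$. One only gets $\beta_2(x,2r)\gtrsim_{C_0}\beta_\infty(x,r)^{1+s/2}$. A segment plus a parallel segment of length $\beta r$ at height $\beta r$ has $\beta_\infty\approx\beta$ but $\beta_2\approx\beta^{3/2}$. Fed into the $\beta_2$ machinery, this would only yield a threshold of the form $1-c\,\delta_0^{2+s}$. To keep the dependence quadratic, the paper uses the planar traveling-salesman-type estimate of Lemma \ref{beta_infty},
$\sum_{\mathcal D_{0,N}(R)}\beta_\infty(Q)^2\ell(Q)^s\lesssim_{C_0}\sum_{\mathcal D_{0,N}(R)}\beta^s_{1,\mu}(Q)^2\ell(Q)^s+\ell(R)^s$.
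This is useful only inside the multiscale window argument (Case 1 of Proposition \ref{prop:stopping1}).
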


\begin{remark}
We emphasize that both thresholds admit a lower bound that is quadratic in $\delta_0$. While the constant $c$ in Theorem \ref{thm:main_higher_dim} is independent of the Ahlfors regularity constant $C_0$, the constant in the planar case (Theorem \ref{thm:main_planar}) depends on $C_0$. This dependence arises from the control of $\beta_\infty$ by $\beta_2$ (see Lemma \ref{beta_infty}) and the compactness method utilized in Lemma \ref{6.46}.
\end{remark}

\begin{remark}
For $n\geq2$, by similar methods to the ones used to prove Theorem \ref{thm:main_planar}, for domains $\Omega\subset \R^{n+1}$
with $s$-Ahlfors regular boundaries satisfying \eqref{eq:non-flat_planar}, one can show that for
$s_0 = n - c(C_0,\varepsilon)\,\delta_0^{n+\varepsilon},$ with any $\varepsilon>0$, if $s \in (s_0, n]$, then $\dim \omega < s$.
To this end, one can use a version of Lemma \ref{beta_infty} where the $\beta_\infty$ coefficients are replaced by $\beta_p$'s,
with $p<\frac {2s}{s-2}$ if $n\geq3$, and any $p\in(1,\infty)$  if $n=2$, and then that $\beta_\infty(\frac12 Q)^{1+\frac sp}\lesssim \beta_p(Q)$.
\end{remark}

\textbf{Novelty and comparison with Azzam's result.}
Our main results improve upon and quantify the dimension drop theorem established by Azzam in two  directions:

\begin{itemize}
\item \textbf{Explicit quantification and constant independence (Theorem \ref{thm:main_higher_dim}):} 
For general dimensions, Theorem \ref{thm:main_higher_dim} replaces Azzam's  requirement with the $L^2$-based assumption $\beta_2(x,r) \ge \delta_0$. The novelty of our result is establishing the explicit threshold $s_0 = n - c\delta_0^2$ directly via the $L^2$ beta numbers, where the dimensional drop constant $c$ is independent of $C_0$. It depends only on the dimension $n$ and the capacity density constant.

\item \textbf{Explicit quantification (Theorem \ref{thm:main_planar}):} For the planar case, Theorem \ref{thm:main_planar} quantifies Azzam's result under the  $\beta_\infty + \beta_{\operatorname{hole}}$ condition. Specifically, we obtain the threshold $s_0 = 1 - c(C_0)\delta_0^2$, yielding a dimension drop with quadratic dependence on $\delta_0$. \end{itemize}

We remark that it is an open question if the dimension drop holds for arbitrary domains in $\R^{n+1}$ with $s$-Ahlfors regular boundaries with $s<n$ close enough to $n$, without non-flatness conditions analogous to the ones in Theorems 
\ref{thm:main_higher_dim} and \ref{thm:main_planar}.

\subsection{Main ideas of proof}\label{main-idea}

The general strategy to establish the dimension drop relies on proving a uniform decay estimate for the sum $\sum (\omega(P)\mu(P))^{1/2}$ over the cubes $P$ of a David-Mattila dyadic lattice. Once this decay is obtained, a standard iteration argument yields the  dimension drop. The core of the paper is therefore devoted to finding a ``stopping cube'' where the density ratio $\omega(P)/\mu(P)$ experiences a significant jump. See details in Proposition \ref{prop:stopping}, Proposition \ref{prop:stopping1} and  Section \ref{11.28}.

For Theorem \ref{thm:main_higher_dim}, we argue by contradiction. Suppose that no such density jump occurs. This assumption implies that the harmonic measure $\omega$ and the Hausdorff measure $\mu$ are mutually comparable at all intermediate scales. Under this comparability, we can utilize a  result of Tolsa (Theorem \ref{thm:Tolsa_main}), which bounds the $L^2$-based beta numbers $\beta_{2,\omega}^n$ in terms of the Riesz transform and measure densities. Crucially, when applying this theorem to the harmonic measure, we can express its Riesz transform in terms of the gradient of the Green function ($\mathcal{R}\omega = \nabla g$).  
By carefully estimating the Green function using the capacity density condition (CDC),
we bound the accumulated $L^2$-non-flatness across a window of $N$ dyadic generations. As shown in Lemma \ref{lemma:key_estimate}, this upper bound is controlled by the harmonic measure densities at the top and bottom generations of the selected window, plus a negligible error term.
However, our geometric hypothesis $\beta_2(x,r) \ge \delta_0$ acts as an opposing force: it guarantees that the underlying set is uniformly non-flat in $L^2$. We transfer this $\mu$-non-flatness to $\omega$-non-flatness, leading the accumulated sum of these beta numbers on the left-hand side must grow linearly with $N$.
A major technical obstacle here is that the density bounds naturally involve the side length $\ell(Q)^s$, and translating these into bounds relative to the measure $\mu(Q)$ inevitably introduces the Ahlfors regularity constant $C_0$. 
To eliminate this dependency, we employ a careful averaging argument (see Subsection \ref{11.57}). Ultimately, choosing the generation gap $N$ to be sufficiently large (proportional to $1/\delta_0^2$) forces a direct contradiction against the boundary density control. This proves the existence of the density jump and yields the quantitative dimension drop $s_0 = n - c\delta_0^2$ independently of $C_0$.

For the planar case (Theorem \ref{thm:main_planar}), the geometric hypothesis $\beta_\infty(x,r) + \beta_{\operatorname{hole}}(x,r) \ge \delta_0$ naturally divides our analysis into two distinct regimes. The first regime occurs when $\beta_\infty(x,r)$ is relatively large. In the planar setting $\mathbb{R}^2$, $\beta_\infty$ can be controlled by $\beta_2$ (see Lemma \ref{beta_infty}). This specific relationship is the reason Theorem \ref{thm:main_planar} is restricted to two dimensions. 
Since bounding $\beta_2$ from below by $\beta_\infty$ requires 
the Ahlfors regularity constant $C_0$, yielding the $C_0$-dependent threshold $s_0 = 1 - c(C_0)\delta_0^2$.
Once we establish a lower bound in terms of $\beta_2$, the problem reduces to the scenario resolved by Theorem \ref{thm:main_higher_dim}.
The second regime occurs when $\beta_\infty(x,r)$ is small, which consequently forces $\beta_{\operatorname{hole}}(x,r)$ to be large. To handle this, we employ a compactness argument. A crucial advantage of this approach is that the smallness of $\beta_\infty$ forces the blow-up limit of the sets to be contained within a line. Furthermore, the presence of holes in this limiting set is vital; it ensures that the domain remains locally connected across the line. This specific limiting geometry—a flat structure with hole—permits the construction of valid Harnack chains crossing the line, thereby allowing the application of Carleson estimates and the boundary Harnack inequalities. Consequently, the resulting structural constants in our estimates for this regime depend only on the Ahlfors regularity constant $C_0$, and are completely independent of the dimension parameter $s$ and the threshold $\delta_0$ (see details in Lemma \ref{6.46}).

\section{Preliminaries}

\subsection{Notations}
In this paper, constants denoted by $C$ or $c$ depend only on the dimension $n$. We write $a \lesssim b$ if there exists a constant $C>0$ such that $a \le Cb$, and $a \approx b$ if $a \lesssim b \lesssim a$. When a constant depends on additional parameters, such as the Ahlfors regular constant $C_0$ or the capacity density constant $c_{\text{cap}}$, we indicate this dependence using subscripts, for example, $\lesssim_{C_0}$, $\lesssim_{c_{\text{cap}}}$.

\subsection{Capacities and the capacity density condition}
The fundamental solution of the negative Laplacian in $\mathbb{R}^2$ is  
\[
\mathcal{E}_2(x) = \frac{1}{2\pi} \log \frac{1}{|x|},
\]  
while in higher dimensions $\mathbb{R}^{n+1}$ ($n\ge 2$) it is given by  
\[
\mathcal{E}_{n+1}(x) = \frac{c_n}{|x|^{n-1}},
\]  
with $c_n = (n-1)\mathcal{H}^n(\mathbb{S}^n)$, $\mathbb{S}^n$ being the unit sphere in $\mathbb{R}^{n+1}$.  
For a measure $\nu$ on $\mathbb{R}^{n+1}$ we define its energy  
\[
I(\nu) = \iint \mathcal{E}_{n+1}(x-y) \, d\nu(x) \, d\nu(y),
\]  
and for a set $F \subset \mathbb{R}^{n+1}$ we set  
\[
\operatorname{Cap}(F) = \frac{1}{\inf_{\nu \in M_1(F)} I(\nu)},
\]  
where the infimum is taken over all probability measures $\nu$ supported on $F$.  
In the plane this is the Wiener capacity, denoted $\operatorname{Cap}_W(F)$; for $n\ge2$ it is the Newtonian capacity. In the plane it is often more convenient to work with the logarithmic capacity  
\[
\operatorname{Cap}_L(F) = e^{-\frac{2\pi}{\operatorname{Cap}_W(F)}}.
\]

\begin{lemma}\label{5.48}
    Let $E \subset \mathbb{R}^{n+1}$ be compact and $n - 1 < s \le n + 1$. In the case $n > 1$, we have
   \[
    \operatorname{Cap}(E) \gtrsim_{s,n} \mathcal{H}_\infty^s(E)^{\frac{n-1}{s}}.
\]
In the case $n = 1$, we have
\[
    \operatorname{Cap}_L(E) \gtrsim_s \mathcal{H}_\infty^s(E)^{\frac{1}{s}}.
\]
\end{lemma}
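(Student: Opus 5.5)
Our plan is to combine Frostman's lemma with an energy estimate. Set $h := \mathcal{H}^s_\infty(E)$; we may assume $h>0$, since otherwise there is nothing to prove. Frostman's lemma, applied to the compact set $E$, gives a finite Borel measure $\nu$ supported on $E$ such that $\nu(B(x,r)) \le r^s$ for every ball and $\nu(E) \gtrsim_n h$. We then take $\sigma := \nu/\nu(E)$, which is a probability measure on $E$. It satisfies $\sigma(B(x,r)) \lesssim r^s/h$, and of course $\sigma(B(x,r))\le 1$ always. From the definition $\operatorname{Cap}(E) = 1/\inf_\sigma I(\sigma)$, it is enough to bound $I(\sigma)$ from above.

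\textbf{Case $n>1$.} Write $R := h^{1/s}$. For each $x$ we use the layer-cake formula:
\[
\int \frac{d\sigma(y)}{|x-y|^{n-1}} = (n-1)\int_0^\infty \frac{\sigma(B(x,r))}{r^{n}}\,dr .
\]
We split the integral at $r=R$. For $r<R$ we use $\sigma(B(x,r))\lesssim r^s/h$. This contributes $\lesssim h^{-1}\int_0^R r^{s-n}\,dr = h^{-1}R^{s-n+1}/(s-n+1)$, which is finite exactly because $s>n-1$. For $r\ge R$ we use $\sigma(B(x,r))\le 1$, which contributes $\lesssim R^{1-n}$. Since $h^{-1}R^{s-n+1} = R^{1-n}$, both pieces are $\lesssim_{s,n} R^{1-n}$. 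Integrating in $x$ against $\sigma$ gives $I(\sigma)\lesssim_{s,n} h^{-(n-1)/s}$, and hence $\operatorname{Cap}(E)\gtrsim_{s,n} h^{(n-1)/s}$.

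\textbf{Case $n=1$.} Here the kernel is $\frac1{2\pi}\log\frac1{|x-y|}$, which can be negative, so we have to decide which quantity to bound. We aim for $\iint \log\frac{1}{|x-y|}\,d\sigma\,d\sigma \le \log\frac1R + C_s$. By the definition of $\operatorname{Cap}_L$ this gives $\operatorname{Cap}_L(E)\ge e^{-C_s}R$, which is the claimed bound $\gtrsim_s h^{1/s}$. (If the infimum of the energy is negative or zero, the corresponding $\operatorname{Cap}_W$ should be read as $+\infty$ or handled through the equivalent logarithmic formulation. We will use the standard identity $\operatorname{Cap}_L(E) = \exp(-\inf_\sigma \iint \log\frac1{|x-y|}\,d\sigma\,d\sigma)$ directly.)

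To prove the bound, fix $x$ and write
\[
\log\frac1{|x-y|} = \log\frac1R + \log\frac R{|x-y|}.
\]
We bound the positive part of the second term by a layer-cake formula:
\[
\int \log^+\frac{R}{|x-y|}\,d\sigma(y) = \int_0^R \frac{\sigma(B(x,r))}{r}\,dr \lesssim h^{-1}\int_0^R r^{s-1}\,dr = \frac{R^s}{s\,h}\lesssim_s 1 .
\]
The negative part only helps, so the bound follows.

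\textbf{Main obstacle.} The substantive step is producing the Frostman measure with total mass comparable to $\mathcal{H}^s_\infty(E)$. This is the classical Frostman lemma for compact sets, which we will invoke with a dimensional constant. The remaining difficulty is bookkeeping: the constants blow up as $s\downarrow n-1$ (from the factor $1/(s-n+1)$) and, in the planar case, as $s\downarrow 0$. This is why the implicit constants depend on $s$ as well as on $n$.
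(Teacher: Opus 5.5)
Your proof is correct and follows the paper's route. The paper gives no argument beyond calling the lemma an immediate consequence of Frostman's lemma and citing the standard references for $n>1$ and $n=1$. Your Frostman measure, normalized to a probability measure, with the layer-cake energy bounds split at $R=\mathcal{H}^s_\infty(E)^{1/s}$, is exactly the computation behind those references, and it correctly handles the sign of the logarithmic kernel via $\operatorname{Cap}_L(E)=\exp\bigl(-\inf_\sigma\iint\log\frac{1}{|x-y|}\,d\sigma\,d\sigma\bigr)$.
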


This result is an immediate consequence of Frostman's Lemma. See \cite[Chapter 8]{M95} for the case $n > 1$, and \cite[Lemma 4]{CTV23} for the case $n = 1$, for example.

Let $\Omega \subsetneq \mathbb{R}^{n+1}$ be open, and let $\xi \in \partial\Omega$ and $r_0 > 0$. We say that $\Omega$ satisfies the $(\xi, r_0)$-local capacity density condition (CDC) if there exists some constant $c_{\mathrm{cap}} > 0$ such that, for any $r \in (0, r_0)$,$$\text{Cap}(B(\xi, r) \setminus \Omega) \ge c_{\mathrm{cap}}r^{n-1} \quad \text{in the case } n > 1,$$and$$\text{Cap}_L(B(\xi, r) \setminus \Omega) \ge c_{\mathrm{cap}} r \quad \text{in the case } n = 1.$$

We say that $\Omega$ satisfies the CDC if it satisfies the $(\xi, \text{diam}(\partial\Omega))$-local CDC uniformly for all $\xi \in \partial\Omega$. In particular, as a consequence of Lemma \ref{5.48}, if $\partial \Omega$ is $s$-Ahlfors regular for some $s > n - 1$, then $\Omega$ satisfies the CDC with the constant $c_{\text{cap}}$ depending on $C_0$.

\begin{lemma}\label{lem:CDC-Green}Let $\Omega \subsetneq \mathbb{R}^{n+1}$ be a CDC domain. Let $B$ be a ball centered on $\partial\Omega$ with $0 < r_B < \operatorname{diam}(\partial\Omega)$. Then for all $x \in \Omega \setminus 2B$ and $y \in B$, we have the following estimates for the Green function :\begin{itemize}\item In the case $n > 1$,$$G_{\Omega}(x, y) \lesssim \frac{\omega_{\Omega}^x(4B)}{\operatorname{Cap}(B\setminus\Omega)} \lesssim_{c_{\mathrm{cap}}} \frac{\omega_{\Omega}^x(4B)}{r_B^{n-1}}.$$\item In the case $n = 1$,$$G_{\Omega}(x, y) \lesssim \frac{\omega_{\Omega}^x(4B)}{\operatorname{Cap}_L(B\setminus\Omega)} \lesssim_{c_{\mathrm{cap}}} \frac{\omega_{\Omega}^x(4B)}{r_B}.$$\end{itemize}\end{lemma}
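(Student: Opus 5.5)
Fix $y\in B$ and regard $u(x):=G_\Omega(x,y)$ as a function of $x$. It is harmonic and positive in $\Omega\setminus\{y\}$ and vanishes on $\partial\Omega$ (quasi-everywhere; CDC gives Wiener regularity). The plan is to compare $u$ with $\omega^x_\Omega(4B)$ on $\Omega\setminus 2B$ via the maximum principle. By the maximum principle it suffices to show that $u\lesssim M\,\omega^x_\Omega(4B)$ on $\partial(2B)\cap\Omega$, where $M:=\sup_{z\in\partial 2B\cap\Omega}G_\Omega(z,y)$. Both functions are harmonic in $\Omega\setminus\overline{2B}$, and $u$ vanishes on $\partial\Omega\setminus 2B$. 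For unbounded $\Omega$ one uses instead the Green function with pole at infinity, or notes that $u$ decays at infinity, so no trouble arises there.

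\textbf{Step 1: an upper bound for $M$.} Consider the measure-theoretic representation. Let $\nu$ be the equilibrium measure of the compact set $K=\overline{B}\setminus\Omega$, so that $\nu(K)=\operatorname{Cap}(K)$ and its potential $U\nu=\mathcal E*\nu\le 1$ everywhere, with $U\nu=1$ q.e. on $K$. For $z\in\Omega$, the function $w(z)=\int G_\Omega(z,\xi)\,d\nu(\xi)$, understood via the symmetric extension of $G_\Omega$ to boundary points (namely $0$ there), is not directly useful. Instead I use the standard identity: for $z\in\Omega$, $U\nu(z)-\int U\nu\,d\omega^z = \int G_\Omega(z,\cdot)\,d(-\Delta U\nu)$. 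Since $\nu$ lives on $\partial\Omega$ this gives zero, so this route also fails.

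The cleaner route is a Harnack comparison. For $z\in\partial 2B$ and $\xi\in K$, the ratio $G_\Omega(z,y)/\big(\mathcal E(z-\xi)\big)$ is controlled by harmonicity in $\Omega\setminus 3/2\,B$. Concretely, the function $v(x)=1-U\nu(x)$ is harmonic off $K$, lies in $[0,1]$, and vanishes q.e. on $K$. On $\partial 4B$ it satisfies $v\le 1-c\operatorname{Cap}(K)/r_B^{n-1}$ in the case $n>1$.

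\textbf{Step 2: the key estimate $G_\Omega(z,y)\lesssim 1/\operatorname{Cap}(K)$ for $z\in\partial 2B$.} I obtain this by duality. By symmetry $G_\Omega(z,y)=G_\Omega(y,z)$, so consider $h(\cdot)=G_\Omega(\cdot,z)$, which is harmonic in $\Omega\cap\tfrac32B$ and vanishes on $K$. Define $m:=\inf_{\partial\frac32 B\cap\Omega}$ and $\sup$ similarly. Integrating against $\nu$ gives
\[
\int U\nu\,d\delta_z-\int U\nu\,d\omega^z=\int G_\Omega(z,\xi)\,d\nu(\xi)\cdot 0 .
\]
Replacing $\nu$ by the capacitary measure of $K$ smeared slightly inside $\Omega$ (a measure $\nu_\epsilon$ on $\Omega$ near $K$) yields
\[
\int G_\Omega(z,\xi)\,d\nu_\epsilon(\xi)=U\nu_\epsilon(z)-\int U\nu_\epsilon\,d\omega^z\le U\nu_\epsilon(z)\lesssim \frac{\operatorname{Cap}(K)}{r_B^{n-1}} .
\]
By Harnack's inequality in $\Omega\setminus \tfrac32B$ (a chain condition holds under CDC in the relevant sense, or else work with the maximum on the sphere), $G_\Omega(z,\cdot)$ is comparable on $B$ near $K$ to its value at $y$. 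This gives $G_\Omega(z,y)\,\operatorname{Cap}(K)\lesssim \operatorname{Cap}(K)/r_B^{n-1}$, which is too weak. The correct normalization is instead to bound $G$ by $\omega^x(4B)$.

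\textbf{Step 3: concluding, and the main obstacle.} Since $\omega^x(4B)\ge \int_K \ldots$, I take the standard argument (Aikawa, or \cite{AHMMT20}) in which
\[
\omega^x(4B)\gtrsim \operatorname{Cap}(K)\,\inf_{\xi} G_\Omega(x,\xi\text{ near }B)
\]
is obtained via the Riesz decomposition of $x\mapsto \omega^x(4B)$ restricted to $\varphi$, a smooth bump equal to $1$ on $2B$ and supported in $4B$. One has $\omega^x(4B)\ge \int\varphi\,d\omega^x=\varphi(x)+\int G(x,\cdot)\Delta\varphi$ as a first attempt. The right version is $\int G_\Omega(x,\zeta)\,d\nu(\zeta)\le \omega^x(4B)$ for $\nu$ the equilibrium measure of $K$ relative to $4B$, combined with a Harnack/maximum-principle lower bound $G_\Omega(x,\zeta)\gtrsim G_\Omega(x,y)$ for $\zeta$ in the support of $\nu$. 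This last comparison, which requires boundary Harnack-type control of $G_\Omega(x,\cdot)$ on $B$ uniformly up to $\partial\Omega$ with only CDC available, is the main obstacle. I would handle it by bounding $\sup_B G_\Omega(x,\cdot)$ through a CDC-driven Hölder decay and averaging, rather than through a pointwise comparison. The $c_{\rm cap}$ version then follows from $\operatorname{Cap}(B\setminus\Omega)\ge c_{\rm cap}r_B^{n-1}$, and the case $n=1$ is identical with $\operatorname{Cap}_L$.
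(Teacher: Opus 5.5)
The paper states this lemma without proof, quoting it as a standard estimate, so I compare your proposal with the standard argument. Your overall frame is the right one: a maximum-principle comparison of $G_\Omega(\cdot,y)$ with $\omega^{\cdot}_\Omega(4B)$ outside a ball around $B$. But there is a genuine gap. The proposal never establishes the two estimates on the comparison sphere that make the frame work, and the route you settle on in Step 3 fails.

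\begin{itemize}
\item The equilibrium measure $\nu$ of $K=\overline B\setminus\Omega$ lives in $\R^{n+1}\setminus\Omega$, where $G_\Omega(x,\cdot)\equiv 0$. Hence $\int G_\Omega(x,\zeta)\,d\nu(\zeta)=0$, and the lower bound $G_\Omega(x,\zeta)\gtrsim G_\Omega(x,y)$ on $\operatorname{supp}\nu$ is false.
\item Moving $\nu$ slightly into $\Omega$ does not rescue this. It would require comparing $G_\Omega(x,\cdot)$ near $\partial\Omega$ with its value at $y$, which Harnack's inequality does not give. Step 2 relies on the same invalid comparison.
\item Your reduction to $u\lesssim M\,\omega^x_\Omega(4B)$ on $\partial(2B)$ needs $\omega^z_\Omega(4B)\gtrsim 1$ there. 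That would put $c_{\mathrm{cap}}$ into the first inequality, where it does not belong.
\end{itemize}
No boundary Harnack principle and no H\"older decay are needed at all.

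The missing idea is to match the two sides at the correct sizes on an intermediate sphere $S=\partial(\tfrac32 B)$.

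\begin{itemize}
\item \textbf{Green function.} For $z\in S\cap\Omega$ and $y\in B$ you have $|z-y|\ge r_B/2$, so trivially $G_\Omega(z,y)\le\mathcal E_{n+1}(z-y)\lesssim r_B^{1-n}$. This is exactly the bound you derived (invalidly) in Step 2 and discarded as too weak.
\item \textbf{Harmonic measure.} The matching lower bound is Bourgain's estimate. Normalize $\nu$ so that $\nu(K)=\operatorname{Cap}(K)$, $U\nu=\mathcal E_{n+1}*\nu\le 1$, and $U\nu=1$ q.e.\ on $K$. Set $v=U\nu-c_n(3r_B)^{1-n}\operatorname{Cap}(K)$. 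Then $v$ is harmonic in $\Omega$, $v\le 1$, and $v\le 0$ off $4B$, since such points are at distance $\ge 3r_B$ from $K$. Boundary points are regular under the CDC, so the maximum principle gives $v\le\omega^{\cdot}_\Omega(4B)$. Since $|z-w|\le\tfrac52 r_B$ for $z\in S$ and $w\in K$, this yields $\omega^z_\Omega(4B)\ge c_n\bigl((5/2)^{1-n}-3^{1-n}\bigr)\operatorname{Cap}(K)\,r_B^{1-n}$ on $S$.
\item \textbf{Conclusion.} Therefore $G_\Omega(\cdot,y)\lesssim\omega^{\cdot}_\Omega(4B)/\operatorname{Cap}(K)$ on $S\cap\Omega$, with a constant depending only on $n$. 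The maximum principle in $\Omega\setminus\overline{\tfrac32B}$, where $G_\Omega(\cdot,y)$ vanishes on $\partial\Omega$ and at $\infty$, gives the first inequality, because $\operatorname{Cap}(K)\ge\operatorname{Cap}(B\setminus\Omega)$. The second inequality is just the CDC.
\end{itemize}
The intermediate sphere is genuinely needed. On $\partial(2B)$, the distances from $\overline B$ to $\partial(2B)$ and to $\R^{n+1}\setminus 4B$ can both equal $3r_B$, so the potential comparison degenerates.

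Finally, the case $n=1$ is not \emph{identical}. There is no planar analogue of $G_\Omega\le\mathcal E_{n+1}\lesssim r_B^{1-n}$. Bounding $G_\Omega(\cdot,y)$ on $S$ by a constant is a genuinely CDC-dependent estimate, obtained via geometric decay of harmonic measure on dyadic annuli. Moreover, the planar line as displayed cannot hold. In the plane $G_\Omega$ and $\omega_\Omega$ are invariant under dilations, while $\operatorname{Cap}_L$ and $r_B$ scale like length. What is true under the CDC is $G_\Omega(x,y)\lesssim_{c_{\mathrm{cap}}}\omega^x_\Omega(4B)$. This is also what the later argument actually uses, since there $\ell(P)^{n-1}=1$.
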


\subsection{The David–Mattila lattice}
By a standard adaptation of the dyadic lattice construction of David and Mattila \cite{DM00}, we obtain the following grid structure associated with a given compactly supported Radon measure $\mu:=\mathcal{H}^s|_{E}$.
\begin{lemma}\label{dyadic-cube}
Let $\mu$ be a compactly supported Radon measure on $\mathbb{R}^{n+1}$. Denote $E = \operatorname{supp}\mu$. 
Then there exists a dyadic grid $\mathcal{D}$ associated to $\mu$ with the constants $\ell_0$, $A_0$, $C_1$, $C$, and $\eta$ depending only on the dimension $n$, satisfying 
 the following properties:

\begin{itemize}
    \item \textbf{(Completeness)} For every $k \in \mathbb{N}$, we have $E = \bigcup_{Q \in \mathcal{D}_k} Q$.
    
    \item \textbf{(Nesting)} For every $k_0 \le k_1$, if $Q_0 \in \mathcal{D}_{k_0}$ and $Q_1 \in \mathcal{D}_{k_1}$, then either $Q_1 \subset Q_0$ or $Q_1 \cap Q_0 = \emptyset$.
    
    \item \textbf{(Tree structure)} For each $Q_1 \in \mathcal{D}_{k_1}$ and each $k_0 < k_1$, there exists a unique cube $Q_0 \in \mathcal{D}_{k_0}$ such that $Q_1 \subset Q_0$. 
    
    \item \textbf{(Scaling)} For each $Q \in \mathcal{D}_k$, there exist a point $z_Q \in Q$ and balls $\widetilde{B}_Q = B\bigl(z_Q, \tfrac{1}{2}\ell_0^k\bigr)$ and $B_Q = B\bigl(z_Q, \tfrac{A_0}{2}\ell_0^k\bigr)$ such that $\widetilde{B}_Q \cap E \subset Q \subset B_Q$.
    
    \item \textbf{(Thin boundary)} For each $Q \in \mathcal{D}_k$ and every $t > 0$, we have
    \[
    \mu\bigl(\{x \in Q : \operatorname{dist}(x, E \setminus Q) \le t \ell_0^k\}\bigr) \le C_1 t^{\eta} \mu(C B_Q)
    \]
    and
    \[
    \mu\bigl(\{x \in E \setminus Q : \operatorname{dist}(x, Q) \le t \ell_0^k\}\bigr) \le C_1 t^{\eta} \mu(C B_Q).
    \]
   
\end{itemize}
\end{lemma}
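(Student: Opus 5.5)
The statement immediately above is the David–Mattila lattice lemma (Lemma \ref{dyadic-cube}). I would not build a new lattice; instead I would run the construction of David and Mattila \cite{DM00}, as presented for general Radon measures (cf. also Tolsa's monograph). That construction needs no doubling assumption and produces the full hierarchy $\mathcal{D}_k$ with the properties listed.

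\textbf{Step 1: nested families of centers.} Fix $\ell_0$ small and $A_0$ large, both depending only on $n$. For each $k$, choose a maximal $\ell_0^k$-separated family of points of $E$. Arrange these families to be nested: every center of generation $k$ is also a center of generation $k+1$.

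\textbf{Step 2: building the cubes.} Define provisional cells by assigning each point of $E$ to a nearby center. To get the tree structure, take $Q\in\mathcal D_k$ to be the union of the generation-$(k+1)$ cells whose centers are attached to the center $z_Q$. Attaching each child to a single parent gives completeness, nesting and the tree structure directly.

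\textbf{Step 3: scaling.} Maximal separation gives $\widetilde B_Q\cap E\subset Q$, after shrinking radii by a fixed factor at each step. Attaching each child center within distance $\lesssim \ell_0^k$ of its parent gives $Q\subset B_Q$, since the geometric series $\sum_j \ell_0^{k+j}$ is comparable to $\ell_0^k$ once $\ell_0$ is small.

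\textbf{Step 4: thin boundaries, which I expect to be the main obstacle.} Without doubling, one cannot simply take Voronoi cells. I would follow David–Mattila's randomization. At each generation, choose the cutting radius $r\in[\tfrac12\ell_0^k,\ell_0^k]$ of each cell so that the measure of the $t\ell_0^k$-neighborhood of the cell's sphere is at most $C t\,\mu(CB_Q)/\ell_0^k$ times the neighborhood width, normalized suitably. Such radii exist because the average over $r$ of these shell measures is bounded by $\mu(CB_Q)$.

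\textbf{Step 5: passing to the limit in generations.} The children that straddle the boundary of $Q$ form a geometric-type sum over descendants. Summing the bad layers across generations $k+j$ with ratio $\ell_0$ gives the bound $C_1t^{\eta}\mu(CB_Q)$ with some $\eta>0$, where $\eta$ comes from balancing $\ell_0$ against the number of affected generations, roughly $\log(1/t)/\log(1/\ell_0)$. The exterior estimate follows symmetrically.

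Finally, since the constants $\ell_0,A_0,C_1,C,\eta$ depend only on $n$ (through covering numbers in $\mathbb{R}^{n+1}$), the lemma follows. It is also worth noting that for $\mu=\mathcal H^s|_E$ with $E$ Ahlfors regular, the thin boundary property is in fact simpler: it follows from the doubling property. Only the independence of the constants from $C_0$ requires the general David–Mattila argument.
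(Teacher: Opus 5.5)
Your top-level route is the paper's. The paper gives no argument beyond invoking the David--Mattila construction for general Radon measures and renaming its constants: their ratio between generations becomes $\ell_0$, their inner and outer balls give $\widetilde B_Q$ and $B_Q$, and their small-boundary bound at scale $\ell_0^{k+l}$ becomes $C_1t^\eta\mu(CB_Q)$ with $t=\ell_0^l$. Read as that citation, your proposal is fine. Your closing remark (doubling would suffice except for independence of the constants from $C_0$) is exactly why the general lattice is needed. However, the construction you outline does not prove the one nontrivial property, the thin boundary. Step 4 is fine in itself, but Step 5 does not follow from it.

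Once the cubes are nested by building $Q$ from its descendants (your Step 2), $\partial Q$ is no longer the sphere whose radius you pigeonholed. It is displaced by up to $\approx\ell_0^{k+1}$. So a point of $Q$ within $t\ell_0^k=\ell_0^{k+l}$ of $E\setminus Q$ is only known to lie, for each $j<l$, within $C\ell_0^{k+j+1}$ of some generation-$(k+j)$ cutting sphere. Each such shell carries a fraction $\lesssim\ell_0$ of the mass of the corresponding balls. Hence any bound that uses the layers one at a time, or adds them as in Step 5, gives at best $\lesssim\ell_0\,\mu(CB_Q)$, with no decay in $l$.

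The factor $t^\eta\approx\ell_0^{\eta l}$ needs a \emph{multiplicative} estimate. The generation-$(k+j)$ shell must capture only a fraction $\lesssim\ell_0$ of the mass of the set left over from generations $k,\dots,k+j-1$, uniformly in $k$ and in the cube.
\begin{itemize}
\item For doubling $\mu$ this is Christ's argument, but its constants depend on the doubling constant, hence on $C_0$, which the lemma excludes.
\item For general $\mu$, a radius pigeonholed against $\mu$ on the generation-$(k+j)$ ball controls the shell only relative to $\mu(CB)$. The leftover set may be a negligible part of $\mu(CB)$ lying entirely inside that shell.
\item Nesting top-down instead fails for the same reason. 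Small cubes then inherit pieces of coarse spheres, whose shells are controlled only relative to the mass of the coarse ball, not relative to $\mu(CB_Q)$.
\end{itemize}
Getting around this is the real content of David and Mattila's proof, and your sketch replaces it with an unjustified summation.

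Note also that for non-doubling $\mu$ the exterior estimate can only hold for $t\lesssim 1$. For large $t$ the exterior neighbourhood may contain arbitrarily heavy mass outside $CB_Q$. The David--Mattila bound is for $t=\ell_0^l$, $l\ge0$, which is all the paper uses.
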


We denote the entire collection of these dyadic cubes by $\mathcal{D}_\mu = \bigcup_{k\ge0}\mathcal{D}_{k}$. By skipping generations if necessary, we may assume that the parameter $\ell_0$ satisfies $\ell_0 < 1/3$.

For any cube $Q \in \mathcal{D}_k$, we refer to the point $z_Q$ provided by the Scaling property as the center of $Q$, and we define its side length as $\ell(Q) = \tfrac{A_0}{2}\ell_0^k$.

For any cube $Q \in \mathcal{D}_\mu$ and an integer $k \ge 0$, we define the collection of its $k$-th generation descendants as
$$
    \mathcal{D}_k(Q) := \{P \in \mathcal{D}_\mu : P \subset Q \text{ and } \ell(P) = \ell_0^{k}\ell(Q)\}.
$$
More generally, for any integers $0 \le j \le k$, we denote the family of descendant cubes from the $j$-th to the $k$-th generation relative to $Q$ by
$$
    \mathcal{D}_{j,k}(Q) := \bigcup_{i=j}^k \mathcal{D}_i(Q) = \left\{ P \in \mathcal{D}_\mu : P \subset Q \text{ and } \ell(P) \in [\ell_0^k \ell(Q), \ell_0^j \ell(Q)] \right\}.
$$

Furthermore, for a constant $\lambda > 1$, we define the dilated cube $\lambda Q$ as the union of all cubes $P$ belonging to the same generation as $Q$ that satisfy $\operatorname{dist}(z_Q, P) \le \lambda\ell(Q)$. Note that this construction immediately yields the geometric inclusions$$B\bigl(z_Q, \lambda\ell(Q)\bigr)\cap E = \lambda B_Q\cap E \subset \lambda Q \subset (\lambda+2)B_Q = B\bigl(z_Q, (\lambda+2)\ell(Q)\bigr).$$Similarly, we define the $k$-th generation descendants restricted to the dilated cube $\lambda Q$ as$$\mathcal{D}_k(\lambda Q) = \{P \in \mathcal{D}_\mu : P \subset \lambda Q \text{ and } \ell(P) = \ell_0^{k}\ell(Q)\}.$$

\subsection{Riesz transforms and $\beta_2^n$ coefficients}

The following theorem, due to Tolsa \cite{T25}, plays a crucial role in our argument.

\begin{theorem}[Tolsa \cite{T25}]\label{thm:Tolsa_main}
Let $\nu$ be a Radon measure in $\mathbb{R}^{n+1}$ and let $B_0 \subset \mathbb{R}^{n+1}$ be a closed ball. Suppose that $M_n(\chi_{3B_0}\nu) \in L^2(\nu|_{2B_0})$ and $\mathcal{R}_*(\chi_{3B_0}\nu) \in L^2(\nu|_{2B_0})$. Then
\[
\begin{aligned}
&\int_{B_0} |M_n(\nu|_{B_0})|^2 \, d\nu + \int_{B_0} \int_0^{2\operatorname{rad}(B_0)} \beta_{2,\nu}^n(x,r)^2 \Theta^n_\nu(x,r) \frac{dr}{r} \, d\nu(x) \\
&\qquad \lesssim \int_{2B_0} |\mathcal{R}\nu - m_{\nu,2B_0}(\mathcal{R}\nu)|^2 \, d\nu + \mathcal{P}_\nu(B_0)^2 \nu(2B_0) + \int_{2B_0} \theta_{\nu}^{n,*}(x)^2 \, d\nu(x),
\end{aligned}
\]
where the implicit constant depends only on $n$.
\end{theorem}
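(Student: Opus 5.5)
The plan is to follow the corona-type strategy used for the codimension-one Riesz transform (in the spirit of Eiderman–Nazarov–Volberg and of the Reguera–Tolsa and Jaye–Tolsa–Villa energy arguments), adapted to an arbitrary Radon measure $\nu$, with no Ahlfors regularity assumed.

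\textbf{Localization.} First I reduce to the measure $\sigma:=\chi_{2B_0}\nu$. For $x\in 2B_0$, the difference $\mathcal{R}\nu(x)-\mathcal{R}\sigma(x)$ is the Riesz transform of $\nu|_{(2B_0)^c}$. Its oscillation on $2B_0$ is bounded by a constant times $\mathcal{P}_\nu(B_0)$, because the kernel is smooth away from the support. So, after subtracting the mean $m_{\nu,2B_0}$, the right-hand side dominates
\[
\int_{2B_0}|\mathcal{R}\sigma-m|^2\,d\sigma
\]
up to the term $\mathcal{P}_\nu(B_0)^2\nu(2B_0)$. The hypotheses on $M_n(\chi_{3B_0}\nu)$ and $\mathcal{R}_*(\chi_{3B_0}\nu)$ only guarantee that all integrals below are finite, so that absorption arguments are legitimate.

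\textbf{Dyadic corona decomposition.} Next I build a David–Mattila lattice for $\sigma$, as in Lemma~\ref{dyadic-cube}, with doubling cubes, and perform a corona decomposition driven by the densities $\Theta_\sigma(Q)=\sigma(2B_Q)/\ell(Q)^n$. Starting from a root $R$, I stop at maximal doubling subcubes of three kinds:
\begin{itemize}
\item \emph{HD cubes}, where $\Theta(Q)\ge A\,\Theta(R)$;
\item \emph{LD cubes}, where $\Theta(Q)\le \delta\,\Theta(R)$;
\item cubes where a large fraction of the measure lies in non-doubling descendants.
\end{itemize}
This produces trees $\mathrm{Tree}(R)$ on which the density is comparable to $\Theta(R)$. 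The left-hand side is then discretized as follows.
\begin{itemize}
\item The maximal-function term is bounded by $\sum_R \Theta(R)^2\sigma(R)$.
\item The beta term is bounded by $\sum_R\sum_{Q\in\mathrm{Tree}(R)}\beta_{2,\sigma}^n(Q)^2\,\Theta(Q)\,\sigma(Q)$.
\end{itemize}
The extra power of $\Theta$ in the first sum is harmless once trees are normalized.

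\textbf{Two estimates.}

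(a) \emph{Packing of tops.} The sum $\sum_R\Theta(R)^2\sigma(R)$ is controlled as follows. HD chains are controlled by $\int\theta^{n,*}_\nu(x)^2\,d\nu$, because along a chain of HD jumps the upper density at points of the bottom cube dominates a geometric series. LD tops are paid by their parents, since their densities decay geometrically.

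(b) \emph{Key tree estimate.} For each tree, I aim to prove
\[
\Theta(R)^2\sum_{Q\in\mathrm{Tree}(R)}\beta_{2,\sigma}^n(Q)^2\sigma(Q)\lesssim \int_{R}|\mathcal{R}\sigma_R-m_R|^2\,d\sigma+\Theta(R)^2\sigma(R),
\]
where $\sigma_R$ is a suitable smoothed approximation of $\sigma$ on the tree. The last term is already paid by (a).

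\textbf{Proving the key tree estimate.} I will use that in codimension one $\mathcal{R}\sigma=c\nabla(\mathcal{E}_{n+1}*\sigma)$ is the gradient of a function harmonic off the support. If $\beta_2(Q)$ is large on a cube of comparable density, I integrate by parts against a bump adapted to the best-approximating plane. This should show that the harmonic potential cannot have nearly constant gradient on $\operatorname{supp}\sigma\cap 2B_Q$, which yields a lower bound
\[
\int_{2B_Q}|\mathcal{R}\sigma-m_Q|^2\,d\sigma\gtrsim\beta_2(Q)^2\,\Theta(Q)^2\sigma(Q).
\]
Summing these over the tree requires an almost-orthogonality of the local oscillations $\mathcal{R}\sigma-m_Q$ across scales, obtained via a Littlewood–Paley decomposition of $\mathcal{R}\sigma_R$ adapted to the tree, with error terms controlled by the stopping conditions.

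I expect step (b) to be the main obstacle. Two specific difficulties are that the lower bound must hold quantitatively with no compactness available (since $\nu$ is arbitrary), and that the orthogonality must be established for a non-doubling measure. My first attempt would be a variational argument on the smoothed measure $\sigma_R$, which is $n$-dimensional and has bounded density on the tree, combined with the quantitative Lipschitz harmonic approximation used by Jaye–Tolsa–Villa to get the $\beta$ lower bound cube by cube.
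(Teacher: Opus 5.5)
The paper does not prove this statement; it is quoted from \cite{T25}. Your proposal therefore has to stand on its own, and it has a genuine gap at step (a).

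\textbf{Step (a).} You bound the maximal-function term by $\sum_R\Theta(R)^2\sigma(R)$. You then control that sum using only $\int\theta_\nu^{n,*}(x)^2\,d\nu(x)$, the root term, and the geometric decay of LD tops; the Riesz transform enters only in (b). Your argument would therefore yield $\int_{B_0}|M_n(\nu|_{B_0})|^2\,d\nu\lesssim\int_{2B_0}\theta_\nu^{n,*}(x)^2\,d\nu(x)+\mathcal{P}_\nu(B_0)^2\nu(2B_0)$, and this is false.

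Take $\nu=m\,\mathcal{L}^{n+1}|_{B(0,\rho)}/\mathcal{L}^{n+1}(B(0,\rho))$ and $B_0=\bar{B}(0,r_0)$ with $r_0\gg\rho$.
\begin{itemize}
\item $\theta_\nu^{n,*}\equiv 0$ and $\mathcal{P}_\nu(B_0)^2\nu(2B_0)\approx m^3r_0^{-2n}$.
\item $M_n\nu\ge m(2\rho)^{-n}$ on $B(0,\rho)$, so the left-hand side is $\gtrsim m^3\rho^{-2n}$.
\item In the theorem this is paid only by the Riesz term. By Newton's theorem, $\mathcal{R}\nu(x)=m\,x/\rho^{n+1}$ for $|x|<\rho$, whose oscillation is $\approx m^3\rho^{-2n}$.
\end{itemize}
The faulty sentence is the one about HD chains. $\theta_\nu^{n,*}(x)$ is a limit as $r\to 0$ and carries no information about the density of the last cube of a finite HD chain. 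In the example the chain stops at scale $\approx\rho$, and all smaller cubes are LD. The upper density only accounts for points lying in infinitely many HD tops. Bounding the packing of high-density tops by the oscillation of $\mathcal{R}\nu$ is the heart of the theorem. Already for measures with $n$-growth, this requires a delicate variational argument in the spirit of Eiderman--Nazarov--Volberg, and nothing in your plan addresses it.

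\textbf{Step (b).} This step is not carried out, and the mechanism you indicate would not close it. The quantities $\int_{2B_Q}|\mathcal{R}\sigma-m_Q|^2\,d\sigma$ contain the oscillation of $\mathcal{R}\sigma$ at all scales below $\ell(Q)$. With the usual martingale differences, $\int_Q|f-m_Qf|^2\,d\sigma=\sum_{P\subseteq Q}\|\Delta_Pf\|_{L^2(\sigma)}^2$. Summing over $Q\in\mathrm{Tree}(R)$ counts each $\|\Delta_Pf\|^2$ once for every ancestor of $P$ in the tree. The total can therefore exceed $\int_R|f-m_Rf|^2\,d\sigma$ by a factor comparable to the number of generations, and no Littlewood--Paley reorganization removes this.

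To sum over the tree you would need a single-scale lower bound for $\beta_{2,\sigma}^n(Q)^2\Theta(Q)^2\sigma(Q)$. Controlling the $\beta$'s of a tree by the Riesz transform is itself a deep multiscale statement, of the type of the Nazarov--Tolsa--Volberg uniform rectifiability theorem. The per-cube lower bound you state is also left unproved, and, as you note yourself, no compactness is available.

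\textbf{Localization.} The localization step is inaccurate as written. The oscillation of $\mathcal{R}(\nu|_{(2B_0)^c})$ on $2B_0$ is not controlled by $\mathcal{P}_\nu(B_0)$, since $\nu$ may charge points arbitrarily close to $\partial(2B_0)$. The bound holds only on a smaller ball such as $B_0$, so the passage to $\sigma=\chi_{2B_0}\nu$ needs a more careful cutoff.
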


For the reader's convenience, we briefly recall the notation involved. For a Radon measure $\nu$ on $\mathbb{R}^{n+1}$, $x \in \mathbb{R}^{n+1}$, $r > 0$, and a dimension parameter $d \in (0, n+1]$ (which will typically be $n$ or $s$ in our setting), the $d$-dimensional density of $\nu$ on $B(x,r)$ is defined by$$\Theta_\nu^d(x,r) = \frac{\nu(B(x,r))}{r^d}.$$For $1 \le p < \infty$, the corresponding $L^p$-based $d$-dimensional Jones' beta number is given by$$\beta_{p,\nu}^d(x,r) = \inf_L \left( \frac{1}{r^d} \int_{B(x,r)} \left( \frac{\operatorname{dist}(y, L)}{r} \right)^p d\nu(y) \right)^{1/p},$$where the infimum is taken over all $n$-dimensional affine planes $L \subset \mathbb{R}^{n+1}$. Associated with the $n$-dimensional density, we define the maximal density operator and the upper $n$-dimensional density respectively as$$M_n\nu(x) = \sup_{r>0} \Theta_\nu^n(x,r), \qquad \theta_\nu^{n,*}(x) = \limsup_{r\to 0} \Theta_\nu^n(x,r).$$Additionally, for a ball $B = B(x,r)$, we define sum$$\mathcal{P}_\nu(B) := \sum_{j \ge 0} 2^{-j} \Theta_\nu^n(x, 2^j r).$$Finally, we recall the Riesz transform operators. The $n$-dimensional Riesz transform of a  Radon measure $\nu$ is defined formally by$$\mathcal{R}\nu(x) = \int \frac{x-y}{|x-y|^{n+1}} \, d\nu(y).$$For $\varepsilon > 0$, the $\varepsilon$-truncated Riesz transform is$$\mathcal{R}_\varepsilon\nu(x) = \int_{|x-y|>\varepsilon} \frac{x-y}{|x-y|^{n+1}} \, d\nu(y).$$The maximal Riesz transform and the principal value are defined respectively as$$\mathcal{R}_*\nu(x) = \sup_{\varepsilon>0} |\mathcal{R}_\varepsilon\nu(x)|, \qquad \operatorname{pv} \mathcal{R}\nu(x) = \lim_{\varepsilon\to 0} \mathcal{R}_\varepsilon\nu(x),$$whenever the latter limit exists. By a standard abuse of notation, we will frequently write $\mathcal{R}\nu$ to denote the principal value $\operatorname{pv} \mathcal{R}\nu$.

\section{Existence of a stopping cube}
In this section we prove Proposition \ref{prop:stopping}, which is the key to obtain the dimension drop. 
The proof proceeds by contradiction. Assuming that no stopping cube exists, we derive an estimate that leads to a contradiction after a careful averaging argument and an application of Theorem \ref{thm:Tolsa_main}.

\begin{proposition}\label{prop:stopping}
Under the assumption \eqref{non-flat},
there exists an integer $m_0$ depending on $\delta_0$ and $C_0$ such that for every cube $Q\in\mathcal{D}_\mu$ one can find a descendant $P\in\mathcal{D}_{0,m_0}(Q)$ satisfying  either
\[
\frac{\omega(P)}{\mu(P)}\;\ge\; 16\,\frac{\omega(Q)}{\mu(Q)}\qquad\text{or}\qquad 
\frac{\omega(P)}{\mu(P)}\;\le\; 16^{-1}\,\frac{\omega(Q)}{\mu(Q)}.
\]  
\end{proposition}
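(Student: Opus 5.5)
We argue by contradiction. Fix $Q\in\mathcal{D}_\mu$ and suppose that every $P\in\mathcal{D}_{0,m_0}(Q)$ satisfies
\[
16^{-1}\frac{\omega(Q)}{\mu(Q)} < \frac{\omega(P)}{\mu(P)} < 16\,\frac{\omega(Q)}{\mu(Q)}.
\]
Write $\Lambda := \omega(Q)/\mu(Q)$. Then $\omega$ and $\Lambda\mu$ are comparable, with constant $16$, on all descendants of $Q$ down to generation $m_0$. By the thin-boundary property and Ahlfors regularity, this also gives $\omega(B(x,r))\approx_{C_0}\Lambda r^s$ for $x\in Q$ and $\ell_0^{m_0}\ell(Q)\lesssim r\lesssim \ell(Q)$. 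We will test $\nu:=\omega|_{3B_0}$ (possibly smoothed at the bottom scale) against Theorem \ref{thm:Tolsa_main}, with $B_0$ a ball comparable to $B_Q$. The goal is a lower bound for the $\beta_{2,\omega}^n$-term growing linearly in $m_0$, and an upper bound that does not grow with $m_0$. Choosing $m_0\approx C/\delta_0^2$ then gives the contradiction.

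\textbf{Lower bound.} On a cube $P$ at an intermediate scale $r\approx\ell(P)$, comparability gives $d\omega\approx \Lambda\,d\mu$ at the resolution of the children of $P$. The hypothesis $\beta_2(x,r)\ge\delta_0$ for $\mu$ then transfers to $\beta_{2,\omega}^n(x,r)^2\gtrsim \delta_0^2\,\Lambda r^{s-n}$. There are two issues here. One is the discretization: we replace the integral over $B(x,r)$ by a sum over children and use that $\mu$ charges a definite fraction of each child ball. The other is normalization: $\beta^n_{2,\omega}$ is normalized by $r^{n}$, whereas $\Theta^n_\omega(x,r)\approx \Lambda r^{s-n}$. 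Integrating $\beta^2\Theta\,dr/r\,d\omega$ over $m_0$ generations gives a left-hand side of order
\[
\delta_0^2\, m_0\,\Lambda^2\,\ell(Q)^{2(s-n)}\,\mu(Q)\,\ell(Q)^{0}\times(\text{powers }\ell_0^{k(s-n)}).
\]
When $s\le n$, the factors $\ell_0^{k(s-n)}\ge 1$ only help.

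\textbf{Upper bound.} Here we use $\mathcal{R}\omega=c\nabla g$, where $g$ is the Green function with pole at infinity, so that $g(x)\approx$ a harmonic-measure quantity. By Lemma \ref{lem:CDC-Green} and the CDC, for $y$ near a cube $P$ of scale $r$ we get $|\nabla g(y)|\lesssim g(y)/\operatorname{dist}(y,E)\lesssim \omega(CP)/r^{n}$. The principal value on $E$ is handled by averaging over a layer at the bottom scale. We therefore bound $\int_{2B_0}|\mathcal{R}\nu-m(\mathcal{R}\nu)|^2d\nu$ by contributions coming only from the top scale (the term $\mathcal{P}_\nu(B_0)$ and the far part $\omega\setminus 3B_0$, which gives a nearly constant field on $2B_0$ with small oscillation) and from the bottom scale. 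The same holds for $\theta^{n,*}$ after mollifying at scale $\ell_0^{m_0}\ell(Q)$. This is Lemma \ref{lemma:key_estimate}: the right-hand side is $\lesssim_{c_{\rm cap}}$ (density at top)$^2\mu(Q)$ plus (density at bottom)$^2\mu(Q)$ plus a small error.

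\textbf{Main obstacle.} The hard step is making the comparison independent of $C_0$. Converting $\ell(P)^s$ into $\mu(P)$ introduces $C_0$ at every scale, and the bottom-scale density term, which carries the factor $\ell_0^{m_0(s-n)}$, could swamp the linear gain. I would first average over the choice of the window: pick the top and bottom generations within $[0,m_0]$ where the $n$-density of $\omega$ is not exceptionally large compared with its average. Since $s>n-c\delta_0^2$, the factor $\ell_0^{-N(n-s)}$ stays bounded by a constant when $N\lesssim\delta_0^{-2}$. With $N=C\delta_0^{-2}$ and $C$ large in terms of $n$ and $c_{\rm cap}$, the lower bound $\gtrsim \delta_0^2 N\,\Lambda^2\cdots$ then exceeds the upper bound. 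This is a contradiction, so some $P\in\mathcal{D}_{0,m_0}(Q)$ must have the density jump, with $m_0\approx\delta_0^{-2}$ (the dependence on $C_0$ entering only through the comparability constants of the discretization).
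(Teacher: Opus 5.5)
Your overall route is the paper's: contradiction, Theorem \ref{thm:Tolsa_main} applied to $\nu=\omega|_{3B_0}*\varphi_t$, transfer of $\beta_2$ from $\mu$ to $\omega$, an upper bound by top- and bottom-generation densities, averaging over windows, and $N\approx\delta_0^{-2}\le (n-s)^{-1}$. However, the way you dispose of the part of $\omega$ outside $3B_0$ in the upper bound does not work. To use $\mathcal{R}\omega=c\nabla g$ you must write $\mathcal{R}\nu=(\mathcal{R}\omega)*\varphi_t-(\mathcal{R}(\omega|_{(3B_0)^c}))*\varphi_t$, and you treat the second term as a nearly constant field with small oscillation on $2B_0$. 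Kernel smoothness only bounds that oscillation by the tail $\sum_{k\ge1}2^{-k}\Theta^n_\omega(x_0,2^k r_0)$. This involves densities of $\omega$ at all scales above $B_0$, including scales beyond $\tilde Q_0$, where the comparability $\omega\approx\Lambda\mu$ says nothing. Nothing in the hypotheses bounds this tail by $\Theta^n_\omega(CB_0)$. So you would get neither an $m_0$ independent of the particular cube nor constants free of $C_0$; even inside $\tilde Q_0$ you would have to pass through Ahlfors regularity, and that constant would enter the required size of $N$ and hence the threshold.

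The missing idea is to use the PDE for the far field as well. The function $\mathcal{R}(\omega|_{(3B_0)^c})$ is harmonic in $3B_0$ and equals $c\nabla g-\mathcal{R}(\omega|_{3B_0})$ off $E$. The mean value property on $B(x,\ell(Q))$, Caccioppoli and Lemma \ref{lem:CDC-Green} then give the purely local bound $|\mathcal{R}(\omega|_{(3B_0)^c})(x)|\lesssim_{c_{\mathrm{cap}}}\Theta^n_\omega(100Q)$ for $x\in 2B_0$. This is how the paper proves Lemma \ref{lemma:key_estimate}.

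There are also two smaller corrections.

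\textbf{Transfer of $\beta_2$.} It cannot be done at the resolution of the children of $P$. Discretizing $\operatorname{dist}(\cdot,L)$ on pieces of size $\epsilon\ell(P)$ costs an error $C\epsilon^2\mu(B(x,r))$, which must be absorbed by $\delta_0^2\mu(B(x,r/2))$. So you need $\epsilon=c(C_0)\delta_0$, as in the paper, which splits the Whitney cubes of $E\setminus L$ at $\epsilon\ell(P)$ in Lemma \ref{lower-bound}. Hence comparability must hold about $\log(1/\delta_0)$ generations below the window.

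\textbf{Thin boundary.} The thin boundary property does not give $\omega(B(x,r))\approx\Lambda r^s$ for all $x\in Q$; the upper bound fails near the boundary of $Q$. It only controls the measure of a boundary layer. The paper uses it exactly for that: to bound the contribution of the cubes $Q\in\mathcal{D}_j(Q_0)$ with $200Q\not\subset Q_0$ by $C^*\ell_0^{j\eta}\Theta^s_\omega(Q_0)^2\omega(Q_0)$, which requires $j\ge j_0(C_0)$. This, together with the length of the averaging range needed to find non-exceptional window endpoints, is where $m_0$ picks up its dependence on $C_0$. That is contrary to your closing remark that $C_0$ enters only through the discretization constants.
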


\begin{proof}
We argue by contradiction. Suppose that for some cube $\tilde{Q}_0\in\mathcal{D}_\mu$ and for every $P\in\mathcal{D}_{0,m_0}(\tilde{Q}_0)$ we have  
\begin{equation}\label{eq:ratio_bound}
16^{-1}\,\frac{\omega(\tilde{Q}_0)}{\mu(\tilde{Q}_0)}\;<\;\frac{\omega(P)}{\mu(P)}\;<\;16\,\frac{\omega(\tilde{Q}_0)}{\mu(\tilde{Q}_0)}.
\end{equation}

\subsection{Setup and auxiliary objects}
In what follows, we fix a cube $Q_0 \in \mathcal{D}_\mu$ such that $100 Q_0 \subset \tilde{Q}_0$ and $\ell(Q_0) \approx \ell(\tilde{Q}_0)$. We then choose an integer $j$ sufficiently large such that every descendant cube $Q \in \mathcal{D}_j(Q_0)$ satisfies $200Q \subset \tilde{Q}_0$. Fixing such a cube $Q$, let $B_Q$ be its associated ball given by the Scaling property in Lemma \ref{dyadic-cube}. In order to apply Theorem \ref{thm:Tolsa_main}, we take the ball $B_0$ in the theorem as $B_0 := 49B_Q$. Accordingly, for a parameter $t \approx \ell_0^{N}\ell(Q)(N$ specified later), we define the regularized measure
\[
\nu := \omega|_{3B_0} * \varphi_t .
\]

Similar to the definition for balls, for a dyadic cube $P \in \mathcal{D}_\mu$, a Radon measure $\nu$ (which will typically be either $\mu$ or $\omega$ in our setting), and a dimension parameter $d \in (0, n+1]$, we define the discrete $L^p$-based Jones' beta number for $1 \le p < \infty$ as$$\beta_{p,\nu}^d(P) = \inf_L \left( \frac{1}{\ell(P)^d} \int_{P} \left( \frac{\operatorname{dist}(y, L)}{\ell(P)} \right)^p d\nu(y) \right)^{1/p},$$where the infimum is again taken over all $n$-dimensional affine planes $L \subset \mathbb{R}^{n+1}$.

The following lemma provides the quantitative transfer mechanism between $\beta_{2,\mu}^s$ and $\beta_{2,\omega}^n$, which is necessary to bridge our geometric hypothesis \eqref{non-flat} and the  Theorem \ref{thm:Tolsa_main}.

\begin{lemma}\label{lower-bound}
Fix $\epsilon\in (0,1)$ and $m_0=m_0(\epsilon, N)$ large enough. Then we have \begin{equation}
\begin{aligned}
\sum_{P\in\mathcal{D}_{0,N}(Q)}\Theta_\omega^n (P)^2 \,\frac{\omega(P)}{\mu(P)} &\,\Big( \beta_{2,\mu}^s (P)^2 \ell(P)^s-CC_0^2\epsilon^2
\mu(P)\Big)\\
&\lesssim \int_{4B_Q} \int_t^{12\ell(Q)} \beta_{2,\omega}^n(x,r)^2 \,\Theta_\nu^n(x,2r)\,\frac{dr}{r}\,d\nu(x).
\end{aligned}
\end{equation}

\end{lemma}

\begin{proof}
To establish the lemma, it suffices to prove a localized estimate for each cube $P \in \mathcal{D}_{0,N}(Q)$. Moreover, by an averaging argument, there exist $x \in 2B_{P}$ and $r \in (10\ell(P), 12\ell(P))$ such that$$\int_{2B_{P}}\int_{10\ell(P)}^{12\ell(P)}\beta_{2,\omega}^n (y,\rho)^2 \Theta_\nu^n (y, 2\rho)\frac{d\rho}{\rho}d\nu(y) \approx\beta_{2,\omega}^n (x,r)^2 \Theta_\nu^n (x, 2r) \nu( 2B_{P}).$$Observe that $\Theta_\nu^n (x, 2r) \nu( 2B_{P}) \gtrsim \Theta_\omega^n (P)\omega(P) $.
Specifically, since $P \subset B(x,r/2)$ and $\mu$ is $(s,C_0)$-Ahlfors regular, the desired inequality follows if we can show that
\begin{equation}\label{1,00}
 \frac{\omega(P)}{\mu(P)}\Big(
\beta_{2,\mu}^s(x,r/2)^2 r^s-C\epsilon^2\mu(B(x,r))\Big)\lesssim \beta_{2,\omega}^n(x,r)^2 r^n= \beta_{2,\omega}^s (x,r)^2 r^s.
\end{equation}

Denote by \(L\) an \(n\)-plane that minimises \(\beta_{2,\omega}^s(x,r)\) (the existence of such a plane is standard).  
Consider a Whitney decomposition of \(E\setminus L\) with respect to \(L\) (i.e., we write \(E\setminus L\) as a disjoint union of maximal dyadic cubes \(R\in\mathcal{D}_\mu\) such that  
\(\operatorname{dist}(R,L)\;\approx\; \ell(R)\). Denote by $\mathcal{W}_{x,r}$ the family of those Whitney cubes such that
$R\cap B(x,r/2)\neq\varnothing$. By choosing appropriately the parameters of the Whitney decomposition we can assume
that all the cubes in $\mathcal{W}_{x,r}$ are contained in $B(x,r)$.
Then we have \begin{equation}\begin{aligned}
\beta_{2,\omega}^s (x,r)^2 r^s&=\int_{B(x,r)\cap E} \Big(\frac{\operatorname{dist}(y,L)}{r}\Big)^2 d\omega(y)\\&= \int_{B(x,r)\cap E\setminus L} \Big(\frac{\operatorname{dist}(y,L)}{r}\Big)^2 d\omega(y)\gtrsim \sum_{R\in \mathcal{W}_{x,r}}\Big(\ell(R)/\ell(P)\Big)^2 \omega(R).\end{aligned}\end{equation}
Similarly, recalling the definition of $\beta_{2,\mu}^s (x,r/2)$, we obtain$$\beta_{2,\mu}^s (x,r/2)^2 r^s \lesssim \sum_{R\in \mathcal{W}_{x,r}}\Big(\ell(R)/\ell(P)\Big)^2 \mu(R).$$Split $\mathcal{W}$ into two subfamilies:$$\mathcal{I}_1 = \bigl\{R\in\mathcal{W}_{x,r}: \ell(R)\ge \epsilon \ell(P)\bigr\},\qquad
\mathcal{I}_2 = \bigl\{R\in\mathcal{W}_{x,r}: \ell(R)<  \epsilon\ell(P)\bigr\}.$$For the cubes in $\mathcal{I}_2$ we have the trivial estimate$$\sum_{R\in\mathcal{I}_2} \Big(\ell(R)/\ell(P)\Big)^{\!2}\mu(R)\;\le\; \epsilon^2\,\mu(B(x,r)).$$For the cubes in $\mathcal{I}_1$, by \eqref{eq:ratio_bound}, we have$$\sum_{R\in\mathcal{I}_1} \Big(\ell(R)/\ell(P)\Big)^{\!2}\mu(R)=
\sum_{R\in\mathcal{I}_1} \Big(\ell(R)/\ell(P)\Big)^{\!2}\omega(R)\frac{\mu(R)}{\omega(R)}\approx \frac{\mu(P)}{\omega(P)}\sum_{R\in\mathcal{I}_1} \Big(\ell(R)/\ell(P)\Big)^{\!2}\omega(R).$$

Combine the above estimate, we have 
\[
\begin{aligned}
\beta_{2,\mu}^s (x,r/2)^2 r^s & \lesssim \sum_{R\in \mathcal{W}_{x,r}}\Big(\ell(R)/\ell(P)\Big)^2 \mu(R)\\
&\lesssim\sum_{R\in\mathcal{I}_2} \Big(\ell(R)/\ell(P)\Big)^{\!2}\mu(R)+ \sum_{R\in\mathcal{I}_1} \Big(\ell(R)/\ell(P)\Big)^{\!2}\mu(R)\\
 &\lesssim \epsilon^2\,\mu(B(x,r))+
 \frac{\mu(P)}{\omega(P)}\sum_{R\in\mathcal{I}_1} \Big(\ell(R)/\ell(P)\Big)^{\!2}\omega(R)\\
 &\lesssim \epsilon^2\,\mu(B(x,r))+
 \frac{\mu(P)}{\omega(P)}\beta_{2,\omega}^s (x,r)^2 r^s.
\end{aligned}
\]
Multiplying both sides by $\frac{\omega(P)}{\mu(P)}$, we arrive at \eqref{1,00}.
\end{proof}

Having established the generalized estimate in Lemma \ref{lower-bound}, we now specialize it by applying our  geometric assumption.
\begin{lemma}\label{lower-bound1}
 Under the above assumptions and assumption \eqref{non-flat}, then we have 
   \[
     \delta_0^2 \ell(Q)^{2(s-n)}\, \!\!\sum_{P\in\mathcal{D}_{0,N}(Q)}\!\!\Theta_\omega^s(P)^2\omega(P)
     \lesssim \int_{4B_Q} \int_t^{12\ell(Q)} \beta_{2,\omega}^n(x,r)^2 \,\Theta_\nu^n(x,2r)\,\frac{dr}{r}\,d\nu(x).
     \]
\end{lemma}
\begin{proof}The proof follows the same localized averaging argument as in Lemma \ref{lower-bound}. By repeating this argument for each cube $P \in \mathcal{D}_{0,N}(Q)$, we can find a point $x \in 2B_{P}$ and a radius $r \in (10\ell(P), 12\ell(P))$ such that the upper bound by the integral holds. It then suffices to estimate the corresponding localized quantity on the left-hand side. Under the  assumption \eqref{non-flat}, we then  
take $\epsilon = c(C_0)\delta_0$ for an appropriately small constant $c(C_0)$, we can absorb the negative error term. Since $P \subset B(x,r/2)$ and $\mu$ is an Ahlfors regular measure, this yields$$\beta_{2,\mu}^s (x,r/2)^2 r^s - C \epsilon^2\,\mu(B(x,r)) \gtrsim \delta_0^2\mu(B(x,r/2)) \gtrsim \delta_0^2\mu(P).$$Multiplying this lower bound by the density ratio $\frac{\omega(P)}{\mu(P)}$ (which appears in the sum as shown in Lemma \ref{lower-bound}), the $\mu(P)$ term cancels out, leaving us with a lower bound proportional to $\delta_0^2 \omega(P)$.

Finally, we substitute this localized lower bound back into the sum. Noting that $\Theta_\omega^n(P)^2 = \Theta_\omega^s(P)^2 \ell(P)^{2(s-n)}$ and $\ell(P) \le \ell(Q)$ for $P \in \mathcal{D}_{0,N}(Q)$, we can pull the factor $\ell(Q)^{2(s-n)}$ out of the summation, which establishes the desired inequality.\end{proof}

\subsection{Key estimate for $l_j(\omega)$}

In the preceding subsection, we bounded the quantities arising from Theorem \ref{thm:Tolsa_main} from below. The current subsection establishes the upper bound, yielding the key estimate presented in Lemma \ref{lemma:key_estimate}.

To achieve this, our strategy relies on the connection between the harmonic measure $\omega$ and the Green function $g$. This allows us to translate the problem of bounding the harmonic measure into controlling the Green function. Furthermore, the Thin boundary property from Lemma \ref{dyadic-cube} plays a crucial role in bounding the error terms that arise near the boundaries of the cubes.

For each generation \(j\) we introduce
\[
l_j(\omega):=\sum_{P\in\mathcal{D}_{j,j+N}(Q_0)}\Theta_\omega^s(P)^2\,\omega(P)\quad \text{ and }\quad
l_j(\mu):=\sum_{P\in\mathcal{D}_{j,j+N}(Q_0)}\Theta_\mu^s(P)^2\,\mu(P).
\]

\begin{lemma}\label{lemma:key_estimate}
Under the assumptions above, if $N\leq \frac1{n-s}$, we have
\begin{equation}\label{eq:key_estimate}
\delta_0^2 \, l_j(\omega) \;\lesssim_{c_{\mathrm{cap}}}\;
\sum_{Q \in \mathcal{D}_j(Q_0)} \Theta_\omega^s(Q)^2 \,\omega(Q)
\;+\; \sum_{Q \in \mathcal{D}_{j+N}(Q_0)} \Theta_\omega^s(Q)^2 \,\omega(Q)
\;+\; C^* \, \ell_0^{j\eta} \, \theta_\omega^s(Q_0)^2 \,\omega(Q_0),
\end{equation}
where \(C^*\) depends on \(C_0\), and \(\eta>0\) is as given in Lemma \ref{dyadic-cube}.
\end{lemma}

\begin{proof}
We fix an arbitrary cube \(Q \in \mathcal{D}_j(Q_0)\) and work on a suitable enlargement.  
Recall that we have chosen \(j\) so large that \(200Q \subset \tilde{Q}_0\).  
Set \(B_0 := 49B_Q\) and define the regularised measure \(\nu := \omega|_{3B_0} * \varphi_t\) with \(t \approx \ell_0^{N}\ell(Q)\).

\substep{Step 1: From \(\beta_{2,\omega}^n\) to \(\beta_{2,\nu}^n\):}
 Using Lemma~\ref{lem:beta-inequality} with \(r \in [t, 12\ell(Q)]\) we obtain
\[
\begin{aligned}
\int_{4B_Q} \int_t^{12\ell(Q)} \beta_{2,\omega}^n(x,r)^2 \,\Theta_\nu^n(x,r)\,\frac{dr}{r}\,d\nu(x)
&\lesssim \int_{4B_Q} \int_t^{12\ell(Q)} \beta_{2,\nu}^n(x,2r)^2 \,\Theta_\nu^n(x,2r)\,\frac{dr}{r}\,d\nu(x) \\
&\quad + \int_{4B_Q} \int_t^{12\ell(Q)} \Bigl(\frac{t}{r}\Bigr)^2 \Theta_\omega^n(x,3r)\,\frac{dr}{r}\,d\nu(x) \\
&\lesssim \int_{B_0} \int_0^{2\operatorname{rad}(B_0)} \beta_{2,\nu}^n(x,r)^2 \,\Theta_\nu^n(x,r)\,\frac{dr}{r}\,d\nu(x) \\
&\quad + \int_{4B_Q} |M_n(\nu|_{B_0})(x)|^2 \Bigl(\int_t^{12\ell(Q)} \Bigl(\frac{t}{r}\Bigr)^2\frac{dr}{r}\Bigr) d\nu(x).
\end{aligned}
\]
Since \(\int_t^\infty (t/r)^2 \, dr/r\) is bounded, the last term is controlled by \(\int_{B_0} |M_n(\nu|_{B_0})|^2 \, d\nu\).  
Combining with Lemma \ref{lower-bound1}, we obtain the estimate
\begin{equation}\begin{aligned}
\label{eq:step1}
   \delta_0^2 \ell(Q)^{2(s-n)}  & \sum_{P\in\mathcal{D}_{0,N}(Q)}\!\Theta_\omega^s(P)^2\omega(P)\\
   &
\lesssim\int_{B_0}\int_0^{2\operatorname{rad}(B_0)}\!\!\beta_{2,\nu}^n(x,r)^2\Theta_\nu^n(x,r)\frac{dr}{r}d\nu(x)
+ \int_{B_0}|M_n(\nu|_{B_0})|^2d\nu .
\end{aligned}
\end{equation}

\substep{Step 2: Estimating the right‑hand side via Theorem \ref{thm:Tolsa_main}:}
We now examine the terms on the right‑hand side of \eqref{eq:step1}.  
First, note that \(2B_0 \subset 100Q\) and that \(\nu\) is supported in \((3+t)B_0\).  
For the Riesz transform we write
\[
\mathcal{R}\nu = (\mathcal{R}\omega|_{3B_0})*\varphi_t = (\mathcal{R}\omega)*\varphi_t - (\mathcal{R}\omega|_{(3B_0)^c})*\varphi_t,
\]
hence
\[
\int_{2B_0}|\mathcal{R}\nu - m_{\nu,2B_0}(\mathcal{R}\nu)|^2d\nu \;\lesssim\; \int_{2B_0}|\mathcal{R}\nu|^2d\nu \le I_1 + I_2,
\]
where
\[
I_1 = \int_{2B_0}|(\mathcal{R}\omega)*\varphi_t|^2d\nu,\qquad 
I_2 = \int_{2B_0}|(\mathcal{R}\omega|_{(3B_0)^c})*\varphi_t|^2d\nu.
\]

\emph{Estimate of \(I_1\):} For any cube \(P\in\mathcal{D}_N(100Q)\) and any \(x\) such that $\operatorname{dist}(x,P)<t$, write \(\mathcal{R}\omega = \nabla g\) with \(g\) the Green function with the pole at infinity.  
By the usual mollifier estimates,
\[
\begin{aligned}
|(\mathcal{R}\omega)*\varphi_t(x)|^2
&= \Bigl|\int_{B(x,t)}\nabla g(y)\varphi_t(x-y)\,dy\Bigr|^2
\lesssim t^{n+1}\int_{B(x,t)}|\nabla g(y)|^2\varphi_t(x-y)^2\,dy\\
&\lesssim t^{-(n+1)}\int_{B(x,t)}|\nabla g(y)|^2\,dy
\lesssim t^{-(n+3)}\int_{B(x,2t)}|g(y)|^2\,dy,
\end{aligned}
\]  
where we also used the Caccioppoli inequality in the last estimate.
For \(y\in B(x,2t)\) the capacity density condition (Lemma~\ref{lem:CDC-Green}) yields
\[
|g(y)|  \lesssim_{c_{\mathrm{cap}}} \frac{\omega(16P)}{\ell(P)^{n-1}}.
\]
Since \(t\approx\ell(P)\), we obtain \(|(\mathcal{R}\omega)*\varphi_t(x)|^2 \lesssim_{c_{\mathrm{cap}}}  \Theta^n_\omega(16P)^2\).  
Summing over all \(P\in\mathcal{D}_N(100Q)\) gives
\[
I_1 \lesssim_{c_{\mathrm{cap}}}  \sum_{P\in\mathcal{D}_N(100Q)} \Theta^n_\omega(16P)^2 \,\nu(P)
      \lesssim \sum_{P\in\mathcal{D}_N(100Q)} \Theta^n_\omega(16P)^2 \,\omega(16P).
\]

\emph{Estimate of \(I_2\):} The function \(\mathcal{R}\omega|_{(3B_0)^c}\) is harmonic in \(3B_0\), therefore for \(x\in2B_0\),
\[
(\mathcal{R}\omega|_{(3B_0)^c})*\varphi_t(x) = \mathcal{R}\omega|_{(3B_0)^c}(x).
\]
Choose \(r=\ell(Q)\). By the mean value property for harmonic functions,
\begin{align*}
|\mathcal{R}\omega|_{(3B_0)^c}(x)| & \approx r^{-(n+1)}\Bigl|\int_{B(x,r)} \mathcal{R}\omega|_{(3B_0)^c}(y)\,dy\Bigr|\\
&\lesssim r^{-(n+1)}\int_{B(x,r)}\bigl(|\mathcal{R}\omega(y)|\,dy
+r^{-(n+1)}\int_{B(x,r)}|\mathcal{R}\omega|_{3B_0}(y)|\,dy.
\end{align*}
By Caccioppoli inequality, arguing as above,  the first intertal on the right hand side part is bounded by \(\Theta_\omega(\Lambda Q)\) (with \(\Lambda\) a fixed enlargement, e.g. \(\Lambda=100\)). The last integral is satisfies a similar estimate after interchanging integrals:
\[
r^{-(n+1)}\int_{B(x,r)}|\mathcal{R}\omega|_{3B_0}(y)|\,dy
\lesssim r^{-(n+1)}\int_{3B_0}\int_{4B_0}\frac{1}{|z|^n}\,dz\,d\omega(y)
\lesssim \Theta^n_\omega(100Q).
\]
Consequently \(|\mathcal{R}\omega|_{(3B_0)^c}(x)|^2 \lesssim_{c_{\mathrm{cap}}}  \Theta^n_\omega(100Q)^2\), and therefore
\[
I_2 \lesssim_{c_{\mathrm{cap}}}  \Theta^n_\omega(100Q)^2 \,\nu(2B_0) \lesssim \Theta^n_\omega(100Q)^2 \,\omega(100Q).
\]

Combining the estimates for \(I_1\) and \(I_2\) we obtain
\begin{equation}\label{eq:Riesz_nu}
\int_{2B_0}|\mathcal{R}\nu - m_{\nu,2B_0}(\mathcal{R}\nu)|^2d\nu
\;\lesssim_{c_{\mathrm{cap}}} \; \sum_{P\in\mathcal{D}_{N}(100Q)}\Theta^n_\omega(16P)^2\,\omega(16P)
\;+\; \Theta^n_\omega(100Q)^2\,\omega(100Q).
\end{equation}

\emph{Estimate of \(\mathcal{P}_\nu(B_0)\) and \(\theta_\nu^{n,*}\):}

For any \(x\in P\) with \(P\in\mathcal{D}_N(100Q)\) and \(s\ll t\),
\begin{align*}
\nu(B(x,s)) & = \int_{B(x,s)}\int_{3B_0}\varphi_t(y-z)\,d\omega(z)\,dy
\lesssim t^{-(n+1)}\int_{B(x,s)}\omega(B(z,t))\,dz\\
&\lesssim (s/t)^{\,n+1}\,\omega(B(x,2t)).
\end{align*}
Hence
\[
\theta_\nu^{n,*}(x) = \limsup_{s\to0}\frac{\nu(B(x,s))}{s^n}
\lesssim \limsup_{s\to0}\frac{s}{t^{n+1}}\,\omega(B(x,2t)) = 0.
\]
Moreover, \(\operatorname{supp}\nu\subset(3+t)B_0\), so for every \(k\ge0\),
\[
\nu(2^k B_0) \le \omega(4B_0),
\]
and therefore
\[
\mathcal{P}_\nu(B_0)^2\,\nu(2B_0) \lesssim \Theta^n_\omega(4B_0)^2\,\nu(4B_0) \lesssim \Theta^n_\omega(200Q)^2\,\omega(200Q).
\]

\vspace{1mm}
We now apply Theorem \ref{thm:Tolsa_main} to the measure \(\nu\) and the ball \(B_0\).  
The theorem gives
\[
\begin{aligned}
\int_{B_0}\!\int_0^{2\operatorname{rad}(B_0)}&\!\beta_{2,\nu}^n(x,r)^2\Theta^n_\nu(x,r)\frac{dr}{r}d\nu(x)
 + \int_{B_0}|M_n(\nu|_{B_0})|^2d\nu\\
&\lesssim \int_{2B_0}|\mathcal{R}\nu - m_{\nu,2B_0}(\mathcal{R}\nu)|^2d\nu+ \mathcal{P}_\nu(B_0)^2\nu(2B_0)
+ \int_{2B_0}\theta_\nu^{n,*}(x)^2d\nu(x).
\end{aligned}
\]
Inserting the estimates from Steps 1 and 2, and using that \(\theta_\nu^{n,*}=0\), we obtain
\[
\begin{aligned}
\delta_0^2 \ell(Q)^{2(s-n)} \!\!\!\sum_{P\in\mathcal{D}_{0,N}(Q)}\!\!\Theta_\omega^s(P)^2\omega(P)&\lesssim_{c_{\mathrm{cap}}} \; \Theta^n_\omega(200Q)^2\,\omega(200Q)
\,+ \!\!\sum_{P\in\mathcal{D}_{N}(100Q)}\!\!\Theta^n_\omega(16P)^2\,\omega(16P)\\
&\lesssim \; \ell(Q)^{2(s-n)}\Theta^s_\omega(200Q)^2\,\omega(200Q)\\
&+(\ell_0^N\ell(Q))^{2(s-n)} \sum_{P\in\mathcal{D}_{N}(100Q)}\Theta^s_\omega(16P)^2\,\omega(16P).
\end{aligned}
\]

If \(s=n\) the factor \(\ell(Q)^{s-n}\) disappears; otherwise we may absorb it by choosing \(N\) such that \(\ell_0^{-N(n-s)}\) is a not too large constant (for instance \(N \leq \frac1{n-s}\)).  
In any case we finally have
\begin{equation}\label{eq:local_estimate}
\delta_0^2 \sum_{P\in\mathcal{D}_{0,N}(Q)}\Theta_\omega^s(P)^2\omega(P)
\;\lesssim_{c_{\mathrm{cap}}} \; \Theta_\omega^s(200Q)^2\,\omega(200Q)
\;+\; \sum_{P\in\mathcal{D}_{N}(100Q)}\Theta_\omega^s(16P)^2\,\omega(16P).
\end{equation}

\substep{Step 3: Summation over all \(Q\in\mathcal{D}_j(Q_0)\)}
To obtain \eqref{eq:key_estimate} we sum \eqref{eq:local_estimate} over all \(Q\in\mathcal{D}_j(Q_0)\).  
The left‑hand side becomes \(\delta_0\, l_j(\omega)\).  
For the right‑hand side we split the sum according to the position of \(200Q\):
\[
\mathcal{K}_1 := \{ Q\in\mathcal{D}_j(Q_0) : 200Q \subset Q_0 \},\qquad
\mathcal{K}_2 := \{ Q\in\mathcal{D}_j(Q_0) : 200Q \cap (Q_0)^c \neq \varnothing \}.
\]

\begin{itemize}
\item For \(Q\in\mathcal{K}_1\), the term \(\Theta_\omega^s(200Q)^2\,\omega(200Q)\) is bounded by a finite  combination (with bounded overlap) of cubes \(P\in\mathcal{D}_j(Q_0)\).  
  More precisely, because \(200Q\) is covered by boundedly many cubes of the same generation, we have
  \[
  \sum_{Q\in\mathcal{K}_1} \Theta_\omega^s(200Q)^2\,\omega(200Q)
  \;\lesssim\; \sum_{P\in\mathcal{D}_j(Q_0)} \Theta_\omega^s(P)^2\,\omega(P).
  \]
\item For \(Q\in\mathcal{K}_2\), every \(x\in Q\) satisfies \(\operatorname{dist}(x,Q_0^c)\lesssim \ell(Q)\lesssim \ell_0^{j}\ell(Q_0)\).  
  Using the thin boundary property of the lattice (and the fact that \(\omega\) and \(\mu\) are comparable on these cubes by the stopping condition \eqref{eq:ratio_bound}), we estimate
  \[
  \begin{aligned}
  \sum_{Q\in\mathcal{K}_2} \Theta_\omega^s(200Q)^2\,\omega(200Q)
  &\lesssim \Bigl(\frac{\omega(Q_0)}{\mu(Q_0)}\Bigr)^3
     \sum_{Q\in\mathcal{K}_2} \sum_{P\subset 200Q} \Theta_\mu^s(P)^2\,\mu(P) \\
  &\lesssim_{C_0} \Bigl(\frac{\omega(Q_0)}{\mu(Q_0)}\Bigr)^3
     \sum_{Q\in\mathcal{K}_2} \mu(200Q)
   \lesssim_{C_0} \Bigl(\frac{\omega(Q_0)}{\mu(Q_0)}\Bigr)^3
     \sum_{Q\in\mathcal{K}_2} \mu(Q) \\
  &\le \Bigl(\frac{\omega(Q_0)}{\mu(Q_0)}\Bigr)^3
     \mu\bigl(\{x\in Q_0 : \operatorname{dist}(x,Q_0^c)\lesssim \ell_0^{j}\ell(Q_0)\}\bigr) \\
  &\lesssim_{C_0} \ell_0^{j\eta}\,\Bigl(\frac{\omega(Q_0)}{\mu(Q_0)}\Bigr)^3 \mu(Q_0)
   \approx_{C_0} \ell_0^{j\eta}\,\Theta^s_\omega(Q_0)^2\,\omega(Q_0).
  \end{aligned}
  \]
  In the last inequality we used the Thin boundary property of the David–Mattila lattice in Lemma \ref{dyadic-cube}.
\end{itemize}

The second term on the right‑hand side of \eqref{eq:local_estimate} is handled similarly:
\[
\begin{aligned}
\sum_{Q\in\mathcal{D}_j(Q_0)}\sum_{P\in\mathcal{D}_{N}(100Q)}\Theta_\omega^s(16P)^2\,\omega(16P)
&\lesssim \sum_{Q\in\mathcal{D}_{j}(Q_0)} \sum_{P\in\mathcal{D}_{N}(200Q)}\Theta_\omega^s(P)^2\,\omega(P) \\
&\lesssim \sum_{P\in\mathcal{D}_{j+N}(Q_0)} \Theta_\omega^s(P)^2\,\omega(P)
   \;+\; C^* \ell_0^{j\eta}\,\Theta^s_\omega(Q_0)^2\,\omega(Q_0).
\end{aligned}
\]

Putting everything together we arrive exactly at the estimate \eqref{eq:key_estimate}.  
This completes the proof of the lemma.
\end{proof}

\subsection{Averaging argument and choice of indices}\label{11.57}
Next, fix integers \(j \ge 1\) and \(m \ge 1\). For each $i\in \mathbb Z$ such that $j+i\,m\geq 0$, define the average
\[
a_i := \frac{1}{m}\sum_{t=1}^{m} \frac{1}{l_{\,j+t+im}(\mu)}.
\]
Recall from the Ahlfors regularity property \eqref{ADR} that every \(l_p(\mu)\) satisfies
\[
C_0^{-2}N\mu (Q_0)\lesssim \sum_{P\in\mathcal{D}_{p,p+N}(Q_0)}\Theta_\mu^s(P)^2\mu(P)=l_p(\mu)\lesssim C_0^{2}N\mu (Q_0).
\]
Consequently the numbers \(a_i\) are bounded.

We claim that there exists some $k$ with $|k|\lesssim C_0$ such that
\begin{equation}\label{eq:akk*}
2a_{k-1} > a_k \quad\text{and}\quad a_k \le 2a_{k+1}.
\end{equation}
Indeed, suppose that  this does not hold for $k=0$. In case that
$a_0>2a_1$, 
let \(k \ge 1\) be the smallest index such that \(a_k \le 2a_{k+1}\). Such a \(k\) exists because otherwise we would have \(a_i > 2a_{i+1}\) for all \(i>0\), which would imply \(a_i /a_0\le 2^{-i}\) and force \(a_i\) to become arbitrarily small, contradicting the lower bound \(a_i/a_0 \ge C_0^{-4}\). Moreover, a crude estimate using the bounds on \(a_i\) gives \(k \lesssim C_0\).  
From the minimality of \(k\) we obtain
\[
a_{k-1} > 2a_k \quad\text{and}\quad a_k \le 2a_{k+1},
\]
and so \eqref{eq:akk*} holds.
In case that $2a_{-1}<a_0$, we let let \(k \le -1\) be the largest index such that \(2a_{k-1} \ge a_{k}\) and we argue analogously.

Now let $K_1\in \{j+t+(k-1)m:t = 1,\dots,m \bigr\}$ and $K_2 \in \{j+t+(k+1)m:t = 1,\dots,m \bigr\}$ be such that
\[
l_{K_1}(\mu) = \min\bigl\{ l_{\,j+t+(k-1)m}(\mu) : t = 1,\dots,m \bigr\}\]
and
\[l_{K_2}(\mu) = \max\bigl\{ l_{\,j+t+(k+1)m}(\mu) : t = 1,\dots,m \bigr\}.
\]
Define three families of indices:
\[
\begin{aligned}
\mathcal{J}   &:= \{\, j+t+km \mid 8l_i(\mu) \ge l_{K_1}(\mu) \text{ and } 8l_i(\mu) \ge l_{K_2}(\mu) \,\},\\
\mathcal{J}_1 &:= \{\, j+t+km \mid 8l_i(\mu) < l_{K_1}(\mu) \,\},\\
\mathcal{J}_2 &:= \{\, j+t+km \mid 8l_i(\mu) < l_{K_2}(\mu) \,\}.
\end{aligned}
\]

From \(2a_{k-1} > a_k\) we obtain
\[
\frac{2}{l_{K_1}(\mu)} \ge 2a_{k-1} > a_k
= \frac{1}{m}\sum_{i=j+1+km}^{j+(k+1)m} \frac{1}{l_i(\mu)}
\ge \frac{1}{m}\sum_{i\in\mathcal{J}_1} \frac{1}{l_i(\mu)}
\ge \frac{1}{m}\cdot \frac{|\mathcal{J}_1|}{\max_{i\in\mathcal{J}_1} l_i(\mu)}.
\]
For \(i\in\mathcal{J}_1\) we have \(8l_i(\mu) < l_{K_1}(\mu)\), hence \(1/l_i(\mu) > 8/l_{K_1}(\mu)\).  
Thus the last term is at least \( \frac{8}{m}\frac{|\mathcal{J}_1|}{l_{K_1}(\mu)}\).  
Consequently
\[
\frac{1}{l_{K_1}(\mu)} > \frac{4}{m}\frac{|\mathcal{J}_1|}{l_{K_1}(\mu)} \quad\Longrightarrow\quad |\mathcal{J}_1| < \frac{m}{4}.
\]
A similar argument (using the inequality \(a_k \le 2a_{k+1}\)) gives \(|\mathcal{J}_2| \le \frac{m}{4}\).  
Therefore
\[
|\mathcal{J}| = m - |\mathcal{J}_1| - |\mathcal{J}_2| \ge m - \frac{m}{4} - \frac{m}{4} =  \frac{m}{2}.
\]

Now choose \(m \approx N\) (where \(N\) is as in Lemma~\ref{lemma:key_estimate}) and apply the key estimate \eqref{eq:key_estimate}.  
Summing over all indices in \(\mathcal{J}\) we obtain
\begin{equation}\label{eq:key-estimaten1}
\delta_0^2 \sum_{i\in\mathcal{J}} l_i(\omega) \;\lesssim_{c_{\mathrm{cap}}} \; l_{K_1}(\omega) + l_{K_2}(\omega) + C^* \ell_0^{j\eta} N \,\Theta_\omega^s(Q_0)^2\,\omega(Q_0). 
\end{equation}

Without loss of generality assume \(l_{K_1}(\omega) \ge l_{K_2}(\omega)\).  
From the definition of \(l_{K_1}(\omega)\) we have
\[
l_{K_1}(\omega) = \sum_{P\in\mathcal{D}_{K_1,K_1+N}(Q_0)} \Theta_\omega^s(P)^2 \,\omega(P)
\;\approx_{C_0}\; N\,\Theta_\omega^s(Q_0)^2\,\omega(Q_0). 
\]
Because \(j\) can be taken arbitrarily large, we can ensure that the error term in 
\eqref{eq:key-estimaten1} is negligible compared to \(l_{K_1}(\omega)\).  
Indeed, choose \(j_0 = j_0(C_0)\) so large that
\[
C^* 2^{-j\eta} N \,\Theta_\omega^s(Q_0)^2\,\omega(Q_0) \le \frac12\,l_{K_1}(\omega) \quad\text{for all }j\ge j_0.
\]
Then \eqref{eq:key-estimaten1} implies the existence of some index \(K_0 \in \mathcal{J}\) such that
\[
l_{K_0}(\omega) \;\le\; \frac{C(n,c_{\mathrm{cap}})}{N\delta_0^2}\,l_{K_1}(\omega). 
\]

On the other hand, by construction of \(\mathcal{J}\) we have \(8l_{K_0}(\mu) \ge l_{K_1}(\mu)\) and \(8l_{K_0}(\mu) \ge l_{K_2}(\mu)\); in particular
\[
l_{K_0}(\mu) \ge \frac{1}{8}\,l_{K_1}(\mu). 
\]

Now express \(l_{K_0}(\omega)\) in terms of \(\mu\).  Using the definition and the fact that on the cubes under consideration the ratio \(\omega(P)/\mu(P)\) is controlled by the stopping condition \eqref{eq:ratio_bound}, we obtain
\[
\begin{aligned}
l_{K_0}(\omega)
&= \sum_{P\in\mathcal{D}_{K_0,K_0+N}(Q_0)} \Theta_\omega^s(P)^2 \,\omega(P) \\
&= \sum_{P\in\mathcal{D}_{K_0,K_0+N}(Q_0)} \Bigl(\frac{\omega(P)}{\mu(P)}\Bigr)^3 \Theta_\mu^s(P)^2 \,\mu(P) \\
&\ge 2^{-24} \Bigl(\frac{\omega(Q_0)}{\mu(Q_0)}\Bigr)^3 \sum_{P\in\mathcal{D}_{K_0,K_0+N}(Q_0)} \Theta_\mu^s(P)^2 \,\mu(P) \\
&= 2^{-24} \Bigl(\frac{\omega(Q_0)}{\mu(Q_0)}\Bigr)^3 l_{K_0}(\mu) \ge2^{-27} \Bigl(\frac{\omega(Q_0)}{\mu(Q_0)}\Bigr)^3 l_{K_1}(\mu)\ge\; 2^{-51}\,l_{K_1}(\omega). 
\end{aligned}
\]

If we choose \(N \gtrsim_{c_{\mathrm{cap}}}  \frac{1}{\delta_0^2}\), we obtain a contradiction. 
Recall that in Lemma \ref{lemma:key_estimate} we asked $N\leq \frac1{n-s}$. The two conditions are compatible because
we assume that
$n-s\leq c\,\delta_0^2$, for a small $c=c(c_{\mathrm{cap}}) >0$.

\subsection{Choice of \(m_0\) and conclusion}
Finally, we choose \(m_0\) sufficiently large, e.g. \(m_0 \approx j_0 + N + C_0^2\).  
With this choice, for every cube \(Q \in \mathcal{D}_{j_0+2N+C_0^2}(Q_0)\) and every \(P \in \mathcal{D}_{0,N}(Q)\), the condition  
\[
\delta_0\ell(P) \ge \ell_0^{-m_0}\ell(Q_0)
\]  
required in Lemma \ref{lower-bound} is automatically satisfied.  
This completes the proof of Proposition \ref{prop:stopping}.
\end{proof}

\section{Proof of Theorem \ref{thm:main_higher_dim}}\label{11.28}

In this section, we complete the proof of Theorem \ref{thm:main_higher_dim}. With Proposition \ref{prop:stopping} already established, the remainder of the proof relies on a standard argument; see, for instance, \cite{T24, A20}. However, for the convenience of the reader and to keep the paper self-contained, we provide the  details of the proof below. We begin by deducing the following decay estimate from Proposition \ref{prop:stopping}.

\begin{lemma}\label{lem:decay}
    Under the assumptions of Proposition \ref{prop:stopping}, there exists $\gamma\in (0,1)$ depending on $m_0$ and $s$ such that for all $Q\in \mathcal{D}_\mu, $
    \begin{equation}
        \sum_{P\in \mathcal{D}_{m_0}(Q)}\omega(P)^{1/2}\mu(P)^{1/2}\le \gamma \,\omega(Q)^{1/2}\mu(Q)^{1/2}.
    \end{equation}
\end{lemma}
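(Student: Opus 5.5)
The plan is to combine Cauchy--Schwarz with the density jump from Proposition \ref{prop:stopping}, quantifying the loss in Cauchy--Schwarz when the ratios $\omega(P)/\mu(P)$ are not all equal. Fix $Q$ and write $\alpha_P := \omega(P)/\mu(P)$ for $P\in\mathcal{D}_{m_0}(Q)$, with $\alpha_Q := \omega(Q)/\mu(Q)$. By normalization (scale $\omega$) we may assume $\alpha_Q=1$ and $\mu(Q)=1$. By completeness and nesting, the cubes of $\mathcal{D}_{m_0}(Q)$ partition $Q$ up to a $\mu$-null set (and $\omega$-null, since $\omega\ll$ nothing is needed: both measures are additive over the partition of $E\cap Q$). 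Hence $\sum_P\mu(P)=\mu(Q)$ and $\sum_P\omega(P)=\omega(Q)$. Then
\[
\sum_{P}\omega(P)^{1/2}\mu(P)^{1/2}=\sum_P \alpha_P^{1/2}\mu(P),
\]
and by Jensen/Cauchy--Schwarz this is at most $(\sum_P\alpha_P\mu(P))^{1/2}=1$. The whole point is to obtain a definite gain $\gamma<1$.

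\textbf{The main step} is to show that the probability measure $\mu$ on the children of generation $m_0$ assigns non-negligible mass to cubes with $\alpha_P$ far from $1$, or else that $\omega$ does. Proposition \ref{prop:stopping} gives some $P_1\in\mathcal{D}_{0,m_0}(Q)$ with $\alpha_{P_1}\ge16$ or $\alpha_{P_1}\le 1/16$. By Ahlfors regularity, $\mu(P_1)\ge c(C_0,s)\,\ell_0^{s m_0}\mu(Q)$. This holds because $P_1$ contains $\widetilde B_{P_1}\cap E$ and $\ell(P_1)\ge \ell_0^{m_0}\ell(Q)$.

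\textbf{Case $\alpha_{P_1}\ge16$.} The descendants $P\in\mathcal{D}_{m_0}(Q)$ with $P\subset P_1$ carry total $\omega$-mass $\omega(P_1)\ge 16\mu(P_1)$. For any $\lambda>0$, use the elementary inequality $\sqrt{a}\le \frac{1}{2}(\lambda^{-1}a+\lambda)$ applied with $a=\alpha_P$, taking $\lambda=1$ outside $P_1$ and $\lambda=4$ inside. This yields
\[
\sum_P\alpha_P^{1/2}\mu(P)\le \tfrac12\big(\omega(Q\setminus P_1)+\mu(Q\setminus P_1)\big)+\tfrac12\big(\tfrac14\omega(P_1)+4\mu(P_1)\big).
\]
Since $\omega(Q)=\mu(Q)=1$, the right side equals $1-\tfrac12\big(\tfrac34\omega(P_1)-3\mu(P_1)\big)\le 1-\tfrac32\cdot\tfrac{3}{2}\cdot\ldots$; more precisely it is $\le 1-\tfrac{3}{8}\omega(P_1)+\tfrac32\mu(P_1)\le 1-\big(6-\tfrac32\big)\mu(P_1)$, using $\omega(P_1)\ge16\mu(P_1)$.

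\textbf{Case $\alpha_{P_1}\le1/16$.} Symmetrically take $\lambda=1/4$ on $P_1$. This gives a bound $\le 1-\tfrac12\big(\tfrac34\mu(P_1)-3\omega(P_1)\big)\le 1-\tfrac{3}{16}\mu(P_1)$.

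In either case the sum is at most $1-c\,\mu(P_1)\le 1-c'(C_0,s)\ell_0^{sm_0}=:\gamma<1$. Undoing the normalization, which is homogeneous of degree $1/2$ in $\omega$ and degree $1/2$ in $\mu$ on both sides, gives the claim for every $Q$.

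\textbf{The expected obstacle} is only bookkeeping. One point is the lower bound on $\mu(P_1)/\mu(Q)$, which depends on $C_0$ as well as $m_0$ and $s$; the statement's dependence on $C_0$ through $m_0$ absorbs this. The other is the case where $P_1=Q$ itself, i.e. generation $0$, which is vacuous since then $\alpha_{P_1}=\alpha_Q=1$. So $P_1$ is a proper descendant, and is in any case a union of cubes in $\mathcal{D}_{m_0}(Q)$, which is all the argument uses.
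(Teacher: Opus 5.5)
Your proof is correct and follows essentially the same route as the paper. Both split the generation-$m_0$ cubes into those inside the jump cube $P_1$ given by Proposition \ref{prop:stopping} and the rest, and both extract a gain proportional to $\mu(P_1)/\mu(Q)\gtrsim_{C_0}\ell_0^{m_0 s}$, giving $\gamma=1-c\,\ell_0^{m_0 s}$. The only difference is cosmetic: after normalizing $\omega(Q)=\mu(Q)=1$, your weighted bound $\sqrt{\omega(P)\mu(P)}\le\frac12(\lambda^{-1}\omega(P)+\lambda\mu(P))$ replaces the paper's Cauchy--Schwarz on each piece followed by $(1-x)^{1/2}\le 1-x/2$, and, as in the paper's own proof, $\gamma$ also depends on $C_0$.
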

\begin{proof}
    First,  Proposition \ref{prop:stopping} implies that we can select  $P_0 \in \mathcal{D}_{0,m_0}(Q)$ satisfies either
    \begin{equation}\label{shrink_cube}
        \frac{\omega (P_0)}{\mu(P_0)} \le 16^{-1}\, \frac{\omega (Q)}{\mu (Q)},
    \end{equation}
    or
    \begin{equation}\label{blowup_cube}
       \frac{\omega (P_0)}{\mu(P_0)}  \ge 16 \,\frac{\omega (Q)}{\mu (Q)} .
    \end{equation}
    
Assume first that \eqref{shrink_cube} holds. We split the sum into two parts: the terms contained in \(P_0\), and the rest. By the Cauchy-Schwarz inequality applied to the cubes in $Q \setminus P_0$, we have
    \[
     \sum_{P\in \mathcal{D}_{m_0}(Q)\setminus \mathcal{D}_{\mu}(P_0)}
    \omega(P)^{1/2} \mu(P)^{1/2} \leqslant \omega(Q \setminus P_0)^{1/2} \mu(Q \setminus P_0)^{1/2} \le \omega(Q)^{1/2} (\mu(Q) - \mu(P_0))^{1/2}.
    \]
    For the term $P_0$, \eqref{shrink_cube}  yields
    \[
    \omega(P_0)^{1/2} \mu(P_0)^{1/2} \le  \frac14\, \omega(Q)^{1/2} \mu(Q)^{1/2} \frac{\mu(P_0)}{\mu(Q)},
    \]
   and  using the elementary inequality $(1-x)^{1/2} \le 1 - x/2$ for $0\le x\le 1$, we verify 
    \[
    \begin{aligned}
     \sum_{P\in \mathcal{D}_{m_0}(Q)}\!\!\omega(P)^{1/2}\mu(P)^{1/2}&= \!\! \sum_{P\in \mathcal{D}_{m_0}(Q)\cap \mathcal{D}_{\mu}(P_0)} \!\!\omega(P)^{1/2} \mu(P)^{1/2}+\!\!
      \sum_{P\in \mathcal{D}_{m_0}(Q)\setminus \mathcal{D}_{\mu}(P_0)}\!\!\omega(P)^{1/2} \mu(P)^{1/2}\\
     &\le\omega(P_0)^{1/2} \mu(P_0)^{1/2} + \sum_{P\in \mathcal{D}_{m_0}(Q)\setminus \mathcal{D}_{\mu}(P_0)} \omega(P)^{1/2} \mu(P)^{1/2}\\
     & \le \omega(Q)^{1/2} \mu(Q)^{1/2} \left[ \left(1 - \frac{\mu(P_0)}{\mu(Q)}\right)^{1/2} + \frac{1}{4}\frac{\mu(P_0)}{\mu(Q)} \right] \\
     &\le \left(1 - \frac{1}{4} \frac{\mu(P_0)}{\mu(Q)} \right) \omega(Q)^{1/2} \mu(Q)^{1/2}.
    \end{aligned}
    \]
     Since
     $$ \frac{\mu(P_0)}{\mu(Q)}\approx_{C_0}\frac{\ell(P_0)^s}{\ell(Q)^s}\ge\ell_0^{m_0 s}, $$we obtain the factor $\gamma = 1 - c \ell_0^{m_0 s}$.
     
    Suppose now that \eqref{blowup_cube} holds. The arguments are quite similar to the previous ones, interchanging the roles of $\mu$ and $\omega$. Indeed, By Cauchy-Schwarz,
\[
    \sum_{P\in \mathcal{D}_{m_0}(Q)\setminus \mathcal{D}_{\mu}(P_0)} \omega(P)^{1/2} \mu(P)^{1/2} \le (\omega(Q) - \omega(P_0))^{1/2} \mu(Q)^{1/2}.
\]
Also, we have
\begin{align*}
    \omega(P_0)^{1/2} \mu(P_0)^{1/2}  \omega(P_0) \frac{\mu(P_0)^{1/2}}{\omega(P_0)^{1/2}} \le \frac{1}{4} \omega(P_0)(\frac{\mu(Q)}{\omega(Q)})^{1/2} = \frac{1}{4} \frac{\omega(P_0)}{\omega(Q)} \omega(Q)^{1/2} \mu(Q)^{1/2}.
\end{align*}
    From two previous estimates and using again the inequality $(1 - x)^{1/2} \le 1 - \frac{1}{2}x$ for $0 \le x \le 1$, we obtain
\begin{align*}
    \sum_{P \in \mathcal{D}_{m_0}(Q)} \omega(P)^{1/2}\mu(P)^{1/2} &\le (\omega(Q) - \omega(P_0))^{1/2}\mu(Q)^{1/2} + \frac{1}{4}\frac{\omega(P_0)}{\omega(Q)} \omega(Q)^{1/2}\mu(Q)^{1/2} \\
    &= \omega(Q)^{1/2}\mu(Q)^{1/2} \left( \left( 1 - \frac{\omega(P_0)}{\omega(Q)} \right)^{1/2} + \frac{1}{4}\frac{\omega(P_0)}{\omega(Q)} \right) \\
    &\le \omega(Q)^{1/2}\mu(Q)^{1/2} \left( 1 - \frac{1}{4} \frac{\omega(P_0)}{\omega(Q)} \right).
\end{align*}
 Observe now that
\[
    \frac{\omega(P_0)}{\omega(Q)}\ge 16 \frac{\mu(P_0)}{\mu(Q)}\gtrsim \ell_0^{m_0s}.
\]
Taking as before, $\gamma = 1 - c \ell_0^{m_0 s}$, we get
\[
    \sum_{P \in \mathcal{D}_{m_0}(Q)} \omega(P)^{1/2} \mu(P)^{1/2} \le \gamma \omega(Q)^{1/2} \mu(Q)^{1/2}.
\]
Finally, we completes the proof of the lemma. 
\end{proof}

\begin{proof}[Proof to Theorem \ref{thm:main_higher_dim}]
We introduce now a dyadic Hausdorff content for subsets of $E := \operatorname{supp}(\mu)$. For $F \subset E$ and $t, \varepsilon > 0$, we denote
\begin{equation}\label{eq:12.1}
    \mathcal{H}_\varepsilon^s(F) = \inf \Big\{ \sum_i \operatorname{diam}(A_i)^s : F \subset \bigcup_i A_i, \operatorname{diam}(A_i) \le \varepsilon \Big\}.
\end{equation}
and
\begin{equation}\label{eq:12.2}
    \mathcal{H}_\varepsilon^{\mathcal{D},s}(F) = \inf \Big\{ \sum_i \ell(Q_i)^s : Q_i \in \mathcal{D}, F \subset \bigcup_i Q_i, \ell(Q_i) \le \varepsilon \Big\},
\end{equation}
By the properties of $\mathcal{D}_\mu$, it is immediate to check that $\mathcal{H}_\varepsilon^t(F) \approx \mathcal{H}_\varepsilon^{\mathcal{D}_\mu,t}(F)$ with the implicit constant depending on the parameters in the definition of $\mathcal{D}_\mu$, which depend only on $n$.

We will show that for every cube $R_0 \in \mathcal{D}_\mu$, $\dim_{\mathcal{H}}(\omega|_{R_0}) \le t$ for some $t < s$ depending on $\gamma$ in Lemma \ref{lem:decay}, which suffices to prove Theorem \ref{thm:main_higher_dim}. To this end, we will prove that, for every $\tau > 0$, there exists a subset $E_\tau \subset E \cap R_0$ satisfying
    \begin{equation}\label{eq:12.19}
    \mathcal{H}_\infty^t(E_\tau) \le \tau \quad \text{and} \quad \omega(E \cap R_0 \setminus E_\tau) \le \tau.
\end{equation}

It is immediate to check that this implies that $\dim_{\mathcal{H}}(\omega|_{R_0}) \le t$.

For $m_0$ as in Lemma \ref{lem:decay}, for every $k \ge 1$ we have
\begin{align*}
    \sum_{Q \in \mathcal{D}_{ k m_0}(R_0)} \omega(Q)^{1/2}\mu(Q)^{1/2} &= \sum_{P \in \mathcal{D}_{ (k-1) m_0}(R_0)} \sum_{Q \in \mathcal{D}_{ m_0}(P)} \omega(Q)^{1/2}\mu(Q)^{1/2} \\
    &\le \gamma \sum_{P \in \mathcal{D}_{ (k-1) m_0}(R_0)} \omega(P)^{1/2}\mu(P)^{1/2}.
\end{align*}

Iterating, we deduce that
\begin{equation}\label{eq:12.20}
    \sum_{Q \in \mathcal{D}_{ k m_0}(R_0)} \omega(Q)^{1/2}\mu(Q)^{1/2} \le \gamma^k \omega(R_0)^{1/2}\mu(R_0)^{1/2} \quad \text{for all } k \ge 1.
\end{equation}

For any fixed $k \ge 1$, denote $\delta_k = \ell_0^{k m_0}$, so that $\ell(Q) = \delta_k \ell(R_0)$ for $Q \in \mathcal{D}_{ k m_0}(R_0)$. For some $t' \in (0, s)$ to be fixed below, consider the families
\[
    S_k^1 = \Big\{ Q \in \mathcal{D}_{ km_0}(R_0) : \omega(Q) \ge \Big( \frac{\ell(Q)}{\ell(R_0)} \Big)^{t'} \omega(R_0) \Big\}, \qquad S_k^2 = \mathcal{D}_{ km_0}(R_0) \setminus S_k^1.
\]

We have
\[
    \ell(Q)^{t'} \le \frac{\omega(Q)}{\omega(R_0)} \ell(R_0)^{t'} \quad \text{for each } Q \in S_k^1,
\]
and thus
\begin{equation}\label{eq:12.21}
    \sum_{Q \in S_k^1} \ell(Q)^{t'} \le \sum_{Q \in S_k^1} \frac{\omega(Q)}{\omega(R_0)} \ell(R_0)^{t'} \le \ell(R_0)^{t'}.
\end{equation}

On the other hand, the cubes $Q \in S_k^2$ satisfy
\[
    \omega(Q) < \Big( \frac{\ell(Q)}{\ell(R_0)} \Big)^{t'} \omega(R_0) = \delta_k^{t'-s} \Big( \frac{\ell(Q)}{\ell(R_0)} \Big)^s \omega(R_0),
\]
and so, by \eqref{eq:12.20},
\begin{align*}
    \sum_{Q \in S_k^2} \omega(Q) &\le \delta_k^{(t'-s)/2} \sum_{Q \in \mathcal{D}_{ km_0}(R_0)} \omega(Q)^{1/2} \Big( \frac{\ell(Q)}{\ell(R_0)} \Big)^{s/2} \omega(R_0)^{1/2} \\
    &\lesssim_{C_0} \delta_k^{(t'-s)/2} \sum_{Q \in \mathcal{D}_{ km_0}(R_0)} \omega(Q)^{1/2} \mu(Q)^{1/2} \Big( \frac{\omega(R_0)}{\mu(R_0)} \Big)^{1/2} \le \delta_k^{(t'-s)/2} \gamma^k \omega(R_0).
\end{align*}

Recalling that $\delta_k = \ell_0^{-km_0}$, for $t'$ close enough to $s$ we have
\[
    \delta_k^{(t'-s)/2} \gamma^k = \ell_0^{-km_0(t'-s)/2} \gamma^k \le \gamma^{k/2},
\]
and thus
\begin{equation}\label{eq:12.22}
    \sum_{Q \in S_k^2} \omega(Q) \lesssim \gamma^{k/2} \omega(R_0).
\end{equation}

Let $t = (t'+s)/2$ and denote
\[
    E_\tau = \bigcup_{Q \in S_k^1} Q.
\]

Since the cubes $Q$ in the union above satisfy $\ell(Q) \le \delta_k \ell(R_0)$, by \eqref{eq:12.21}, we have
\[
    \mathcal{H}_\infty^t(E_\tau) \lesssim (\delta_k \ell(R_0))^{t-t'} \mathcal{H}_{\delta_k \ell(R_0)}^{\mathcal{D}_\mu, t'} \Big( \bigcup_{Q \in S_k^1} Q \Big) \lesssim \delta_k^{t-t'} \ell(R_0)^t = \delta_k^{s-t} \ell(R_0)^t.
\]

On the other hand, notice that
\[
    E \cap R_0 \subset E_\tau \cup \bigcup_{Q \in S_k^2} Q.
\]

Then, by \eqref{eq:12.22}, $\omega(E \cap R_0 \setminus E_\tau) \lesssim \gamma^{k/2} \omega(R_0)$. Hence, for $k$ large enough \eqref{eq:12.19} follows. 
\end{proof}

\section{Proof of Theorem \ref{thm:main_planar} : The Planar Case}
In this section, we present the proof of Theorem \ref{thm:main_planar}. As outlined in Section \ref{main-idea}, the geometric hypothesis $\beta_\infty(x,r) + \beta_{\operatorname{hole}}(x,r) \ge \delta_0$ naturally divides our analysis into two regimes. We organize the section as follows: First, in Lemma \ref{6.46}, we use a compactness argument for the regime where $\beta_\infty$ is small, utilizing the fact that the blow-up limit is contained in a line. 
Next, we address the complementary regime by exploiting the specific planar property that $\beta_\infty$ is controlled by $\beta_2$. Proposition \ref{prop:stopping1} synthesizes these two regimes to secure the existence of a stopping cube with a definitive density jump. This allows us to conclude the proof by applying the standard dimension drop machinery from Section \ref{11.28}.

\begin{lemma}\label{6.46}
Let $s \in [2/3, 1]$, $\delta \in( 0,1)$, and $M>1$. Then, there exist constants $\tau,\kappa \in(0,1)$, both depending only on  $C_0$ and $M$, and independent of $s$ and $\delta$, such that the following holds: Suppose $E$ is an $(s,C_0)$-Ahlfors regular compact set. Let $x \in E$ and $0 < r < \operatorname{diam}(E)$. If $\beta_{\infty}(x,10r) < \tau \delta$ and $\beta_{\operatorname{hole}}(x,r) > \delta$, then there exist balls $B_1 \subset B_2 \subset B(x,r)$ centered on $E$ with $r(B_2) \geq\kappa \delta r$ and  $r(B_1) \geq\kappa \delta r$ such that$$\frac{\omega(B_1)}{r(B_1)^s} > M\frac{\omega(B_2)}{r(B_2)^s}.$$\end{lemma}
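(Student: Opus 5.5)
We argue by contradiction combined with compactness, as announced in Section \ref{main-idea}. First rescale: replace $E$ by $(E-x)/r$, so that $x=0$, $r=1$. The conclusion is scale-invariant, because $\omega(B)/r(B)^s$ changes by a common factor and the harmonic measure with pole at infinity rescales accordingly. It is also invariant under multiplying $\omega$ by a constant, so we may normalize $\omega(B(0,1))=1$.

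The hypotheses say the following. The set $E\cap B(0,10)$ lies in the $10\tau\delta$-neighborhood of some line $L$. Every line, and in particular $L$, meets $B(0,1)$ at a point $z$ with $\operatorname{dist}(z,E)>\delta$; hence $L\cap B(0,1)$ contains a hole of size comparable to $\delta$. Since $\tau$ is at our disposal, this hole is much larger than the width $\tau\delta$ of the strip containing $E$.

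Next we rescale once more by the factor $\delta$ around the hole. Pick $z\in L\cap B(0,1)$ with $B(z,\delta)\cap E=\varnothing$. Let $y\in E$ be a point nearest to $z$ along $L$ in the strip; by Ahlfors regularity such a $y$ exists within distance $\lesssim_{C_0}\delta$ after adjusting constants. Work in the ball $B(y,A\delta)$ with $A$ large depending on $C_0$. In this ball $E$ is $\tau$-flat at scale $\delta$ and has a gap of length $\approx\delta$ on $L$ adjacent to $y$. We look for $B_1\subset B_2$ inside this ball with radii $\approx\kappa\delta$, so the constraints $r(B_i)\ge\kappa\delta r$ become automatic once $\kappa\lesssim A^{-1}$.

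Now suppose the conclusion fails for a sequence $\tau_k\to0$ (with $M,C_0$ fixed), sets $E_k$, exponents $s_k\in[2/3,1]$, and parameters $\delta_k$. After the rescaling above, the points $y_k$ become $0$. The sets $E_k$ are uniformly Ahlfors regular with $s_k\to s_\infty\in[2/3,1]$, so they converge in Hausdorff distance on compacts to an $s_\infty$-Ahlfors regular set $E_\infty$. This limit is contained in a line $L_\infty$ and has a complementary gap $I\subset L_\infty$ of unit length adjacent to $0$.

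The failure of the conclusion says that for all balls $B_1\subset B_2$ centered on $E_k$ in the rescaled window with radii $\ge\kappa$, we have $\omega_k(B_1)/r(B_1)^{s_k}\le M\,\omega_k(B_2)/r(B_2)^{s_k}$. Using Lemma \ref{5.48}, the CDC holds uniformly, so we have uniform Hölder continuity of Green functions near the boundary. Therefore the Green functions $g_k$ with pole at infinity, normalized at scale $1$, converge locally uniformly to a Green function $g_\infty$ for $\mathbb R^2\setminus E_\infty$, and $\omega_k\rightharpoonup\omega_\infty$ weakly.

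The hole $I$ guarantees that $\mathbb R^2\setminus E_\infty$ is connected across $L_\infty$ with Harnack chains through $I$. The key point is then that $g_\infty$ is symmetric under reflection in $L_\infty$ and behaves like a nondegenerate function near the endpoint $0$ of the gap. By the boundary Harnack principle and the Carleson estimate in the slit-type domain, near the endpoint of the gap $\omega_\infty(B(0,\rho))\approx\rho^{1/2}$, in the manner of a square root at the tip of a slit, relative to the density at the unit scale. More precisely, since $E_\infty$ near $0$ lies on one side of $0$ along $L_\infty$, the domain contains a cone of aperture $2\pi$ minus a set of zero angle at $0$. Comparison with $\operatorname{Re}\sqrt{z}$ gives $\omega_\infty(B(0,\rho))\gtrsim \rho^{1/2}\omega_\infty(B(0,1))$, with constants depending only on $C_0$.

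Since $s_\infty\ge2/3>1/2$, we get $\omega_\infty(B(0,\rho))/\rho^{s_\infty}\gtrsim\rho^{1/2-s_\infty}\ge\rho^{-1/6}$. This exceeds $M\omega_\infty(B(0,1))$ for $\rho=\rho(M,C_0)$ small. Here we use $B_1=B(0,\rho)$ and $B_2=B(0,1)$ and pick $\kappa\le\rho$. Passing this strict inequality back to $E_k$ via weak convergence, shrinking and enlarging the balls slightly to handle boundary mass, contradicts the assumed failure. Since the bound depends only on $C_0$ and $M$, the constants $\tau,\kappa$ are uniform in $s$ and $\delta$.

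The main obstacle is the compactness step: justifying convergence of $\omega_k$ to the harmonic measure of the limit domain, which requires the uniform CDC and the Hölder decay of $g_k$ near $E_k$. A second obstacle is the lower bound $\omega_\infty(B(0,\rho))\gtrsim\rho^{1/2}$ at the gap endpoint. For this I would first try a direct barrier argument: restrict to $B(0,1)\setminus L_\infty^-$, where $L_\infty^-$ is the half-line containing $E_\infty$, and compare $g_\infty$ with $\operatorname{Re}\sqrt z$ via the maximum principle. The extra subset of $E_\infty$ only lowers $g_\infty$, so the barrier must be set up so that it bounds from below the mass $\omega_\infty$ puts near $0$. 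The cleanest way to get this lower bound is through the Carleson estimate, $\omega(B(0,\rho))\approx \rho^{n-1}g(a_\rho)$ with $n=1$ read logarithmically, applied at the corkscrew point $a_\rho$ inside the gap.
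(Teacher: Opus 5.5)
Your proposal is correct and follows the same route as the paper: contradiction plus compactness, blowing up at the touching point of the hole so that the limit set lies in a line with a gap ending at $0$, the square-root lower bound $\omega(B(0,\rho))\gtrsim\rho^{1/2}$ at that endpoint, and the contradiction from $s\ge 2/3>1/2$. The differences are minor. The paper gets the $\rho^{1/2}$ bound from Azzam's change-of-pole formula and Tolsa's finite-pole estimate, and lets $\kappa_k\to0$ along the sequence. You get it directly from the slit-disk barrier combined with the CDC estimate $\omega(B(0,4\rho))\gtrsim g(a_\rho)$ at a point $a_\rho$ in the gap, and your normalization of $g_k$ handles possible escape of mass to infinity more carefully than the paper does.

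The only slip is your remark that $g_\infty$ is symmetric under reflection in $L_\infty$. This fails in general, since $E_k$ is only controlled in a bounded window, and even when the limit set lies in a line the normalized limit need not be symmetric. Nothing in your argument actually uses it.
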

\begin{proof}
The proof relies on a compactness argument in the Attouch-Wets topology, which is a local variant of the Hausdorff metric topology for the convergence of sets. For $r > 0$ and non-empty sets $E, F \subset \mathbb{R}^2$, we define
\[
d_r(E, F) = \max\Big( \sup_{x \in E \cap \bar{B}(0,r)} \operatorname{dist}(x, F), \sup_{x \in F \cap \bar{B}(0,r)} \operatorname{dist}(x, E) \Big).
\]
A sequence of sets $E_k \subset \mathbb{R}^2$ is said to converge to a set $F \subset \mathbb{R}^2$ in the Attouch-Wets topology if $d_r(E_k, F) \to 0$ as $k \to \infty$ for every $r > 0$. Similar to the classical Hausdorff convergence for compact subsets, any sequence of closed subsets $E_k \subset \mathbb{R}^2$ intersecting a fixed ball admits a subsequence that converges in the Attouch-Wets topology to some closed set in $\mathbb{R}^2$ (see \cite{BL15} for a recent exposition).

Suppose, for the sake of contradiction, that the lemma fails. Then, for fixed constants $C_0, M > 1$, there exist sequences $\delta_k \in (0,1)$, $\tau_k \to 0$, $\kappa_k\to0$, $s_k \in [2/3, 1]$, and $(s_k, C_0)$-Ahlfors regular compact sets $E_k \subset \mathbb{R}^2$ with points $x_k \in E_k$ and radii $0 < r_k \leq \operatorname{diam}(E_k)$ such that 
\[
\beta_{\infty, E_k}(x_k, 10r_k) \leq \tau_k \delta_k \quad \text{and} \quad \beta_{\operatorname{hole}, E_k}(x_k, r_k) > \delta_k,
\]
and moreover, for all balls $B_1 \subset B_2 \subset B(x_k, r_k)$ centered on $E_k$ with $r(B_1), r(B_2) \geq\kappa_k \delta_k r_k$, we have
\begin{equation}\label{eq:contra_assumption}
\frac{\omega_k^{\infty}(B_1)}{r(B_1)^{s_k}} \le M \frac{\omega_k^{\infty}(B_2)}{r(B_2)^{s_k}},
\end{equation}
where $\omega_k$ is the harmonic measure for $\mathbb{R}^2 \setminus E_k$.

By translating and dilating $E_k$, we may assume that $x_k = 0$ and $r_k = 1$. The conditions $\beta_{\infty, E_k}(0, 10) \leq \tau_k \delta_k$ and $\beta_{\operatorname{hole}, E_k}(0, 1) > \delta_k$ imply the existence of a line $L_k$ such that $B(0,10) \cap E_k \subset \{x : \operatorname{dist}(x, L_k) \le 10\tau_k \delta_k\}$ and $\sup_{y \in B(0,1) \cap L_k} \operatorname{dist}(y, E_k) > \delta_k$. Consequently, there exists a point $z_k \in B(0,1) \cap L_k$ such that $\operatorname{dist}(z_k, E_k) \ge \delta_k / 2$. Let $y_k \in E_k \cap B(0,2)$ be a touching point realizing this distance, so that $\operatorname{dist}(z_k, E_k) = \operatorname{dist}(z_k, y_k)$.

We now introduce the magnified sets $\tilde{E}_k = \frac{E_k - y_k}{\delta_k/4}$. Under this scaling, the beta numbers satisfy
\[
\beta_{\infty, \tilde{E}_k}(0,8) = \beta_{\infty, E_k}(y_k, 2\delta_k) \le \frac{10}{2\delta_k} \beta_{\infty, E_k}(0, 10) \lesssim \tau_k.
\]
 By a suitable rotation, we can assume that the best approximating plane $\tilde L_k$ is the horizontal axis and that $B((-1,0),1/2)\cap \tilde E_k=\varnothing$.
 
 Let $\tilde{\omega}_k$ denote the corresponding harmonic measure for the re-scaled domain.
By conformal invariance, the inequality \eqref{eq:contra_assumption} is preserved for radii $\gtrsim \kappa_k$. Thus, for any balls $B(0, r_1) \subset B(0, r_2)\subset B(0,4)$ with $r_1, r_2 \gtrsim \kappa_k$, it holds that
\begin{equation}\label{eq:scaled_contra}
\frac{\tilde{\omega}_k^{\infty}(B(0,r_1))}{r_1^{s_k}} \le M\frac{\tilde{\omega}_k^{\infty}(B(0,r_2))}{r_2^{s_k}}.
\end{equation}

Passing to a subsequence, we may assume that $s_k \to s \in [2/3, 1]$ and that the sets $\tilde{E}_k$ converge in the Attouch-Wets topology to an $(s, C_0)$-Ahlfors regular set $\widetilde{E} \subset \mathbb{R}^2$. Let $\tilde{\omega}_k^\infty$ denote the harmonic measure with pole at infinity; it follows that $\tilde{\omega}_k^\infty$ converges weakly-* to $\widetilde{\omega}^\infty$, the harmonic measure for $\widetilde{\Omega} = \mathbb{R}^2 \setminus \widetilde{E}$ with pole at infinity.

Since $\beta_{\infty, \tilde{E}_k}(0, 8) \lesssim \tau_k \to 0$ as $k \to \infty$, we deduce that $\beta_{\infty, \widetilde{E}}(0, 8) = 0$. Because $0 \in \widetilde{E}$, this means $\widetilde{E} \cap B(0,8)$ is completely contained in a line, which we may assume to be the horizontal axis and also that $B((-1,0),1/2)\cap \widetilde E=\varnothing$. Furthermore, from \eqref{eq:scaled_contra} and the weak-* convergence of the measures, we fix $r_2 = 1/100$ and obtain for $0<r \leq 1/100$:
\[
\frac{\widetilde{\omega}^\infty(B(0, r))}{r^s} \lesssim M \,\widetilde{\omega}^\infty(B(0, 1/100)).
\]

Now, let $x_0 = (0, 1/200)$. Notice that $B(0,8)\setminus \widetilde{E} $ is a semi-uniform domain satisfying the CDC,\footnote{Since $s$ can be exactly $1$, the domain $B(0,8)\setminus \widetilde{E} $ might fail to be a uniform domain.} and then by
the change of pole formula proved by Azzam (see \cite{A21}), it holds that
\[
\widetilde{\omega}^{x_0}(B(0, r)) \lesssim \frac{\widetilde{\omega}^\infty(B(0, r))}{\widetilde{\omega}^\infty(B(0, 1/100))} \lesssim M r^s.
\]
We remark that an inspection of the arguments in \cite{A21} shows that the change of pole formula does not require the
whole domain $\R^2\setminus \widetilde E$ to be semi-uniform, and instead, it suffices $B(0,8)\setminus \widetilde E$ to be semi-uniform.

On the other hand, applying the techniques from \cite[Section 3]{T24}, we have the lower bound $\widetilde{\omega}^{x_0}(B(0, r)) \gtrsim r^{1/2}$. Combining the previous estimates gives
\[
r^{1/2} \lesssim M r^s \implies r^{1/2 - s} \lesssim M.
\]
Because $s \in [2/3, 1]$, the exponent satisfies $1/2 - s \le -1/6 < 0$. As $r \to 0$, the term $r^{1/2 - s}$ diverges to $+\infty$, which drastically contradicts the fact that it is bounded by the fixed constant $M$. 
 \end{proof}

With the compactness argument of Lemma \ref{6.46} concluded, we have  handled the geometric regime where $\beta_\infty$ is  small. For the complementary regime, as outlined at the beginning of this section, this is precisely the scenario where the planar relationship between $\beta_\infty$ and  $\beta_2$ comes into play (see Lemma \ref{beta_infty} below).

\begin{proposition}\label{prop:stopping1}

 Under the assumptions of Theorem \ref{thm:main_planar}, there exists $m_1$ depending on $\delta_0$ and $C_0$ 
 and $\gamma\in (0,1)$ depending on $m_1$ and $s$ such that for all $Q\in \mathcal{D}_\mu, $
    \begin{equation}\label{3.17}
        \sum_{P\in \mathcal{D}_{m_1}(Q)}\omega(P)^{1/2}\mu(P)^{1/2}\le \gamma \omega(Q)^{1/2}\mu(Q)^{1/2}.
    \end{equation}
\end{proposition}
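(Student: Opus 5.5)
The plan is to reduce Proposition \ref{prop:stopping1} to a density-jump statement, as in Proposition \ref{prop:stopping}, and then repeat the proof of Lemma \ref{lem:decay} verbatim. Concretely, it suffices to show that there is $m_1=m_1(\delta_0,C_0)$ such that every $Q\in\mathcal{D}_\mu$ has a descendant $P_0\in\mathcal{D}_{0,m_1}(Q)$ with
\[
\frac{\omega(P_0)}{\mu(P_0)}\ge 16\,\frac{\omega(Q)}{\mu(Q)}\quad\text{or}\quad \frac{\omega(P_0)}{\mu(P_0)}\le 16^{-1}\,\frac{\omega(Q)}{\mu(Q)}.
\]
Once this holds, the two Cauchy--Schwarz computations of Lemma \ref{lem:decay} give \eqref{3.17} with $\gamma=1-c\,\ell_0^{m_1 s}$.

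To find $P_0$ we argue by contradiction, as in Proposition \ref{prop:stopping}. Suppose \eqref{eq:ratio_bound} holds on $\mathcal{D}_{0,m_1}(\tilde Q_0)$. Fix $\tau=\tau(C_0,M)$ from Lemma \ref{6.46}, with $M$ a large constant to be chosen depending on $C_0$. Then split according to the geometry at the scales inside $\tilde Q_0$.

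\emph{Case 1.} There exist $x\in E$ and $r\approx\ell(P)$ for some $P\in\mathcal{D}_{0,m_1/2}(\tilde Q_0)$, with $B(x,10r)$ well inside $\tilde Q_0$, such that $\beta_\infty(x,10r)<\tau\delta_0/2$.

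\emph{Case 2.} For all such $x,r$ we have $\beta_\infty(x,10r)\ge\tau\delta_0/2$.

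In Case 1, hypothesis \eqref{eq:non-flat_planar} gives $\beta_{\rm hole}(x,r)\ge\delta_0-\beta_\infty(x,r)\gtrsim\delta_0$. Lemma \ref{6.46} (with $\delta\approx\delta_0$, using $s\ge 2/3$ since $s>1-c\delta_0^2$) then produces balls $B_1\subset B_2\subset B(x,r)$ with radii $\ge\kappa\delta_0 r$ and
\[
\frac{\omega(B_1)}{r(B_1)^s}>M\,\frac{\omega(B_2)}{r(B_2)^s}.
\]
Next, cover $B_1$ and $B_2$ by boundedly many cubes of comparable size; there are $\lesssim_{C_0}1$ of them, and each has generation at most $\log(1/(\kappa\delta_0))$ below $P$. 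Ahlfors regularity converts $r(B_i)^s$ into $\mu$ of these cubes up to factors of $C_0$. The bound \eqref{eq:ratio_bound} then says that $\omega/\mu$ on all of these cubes lies within a factor $16^2$ of $\omega(\tilde Q_0)/\mu(\tilde Q_0)$. This gives
\[
\frac{\omega(B_1)}{r(B_1)^s}\lesssim_{C_0} 16^2\,\frac{\omega(B_2)}{r(B_2)^s},
\]
which contradicts the previous display once $M$ is chosen large depending on $C_0$. This forces $m_1\gtrsim\log(1/(\kappa\delta_0))$.

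In Case 2 we invoke Lemma \ref{beta_infty}, the planar estimate $\beta_\infty(\tfrac12 Q)\lesssim_{C_0}\beta_2(Q)^{a}$; I expect it to give $\beta_2\gtrsim_{C_0}\beta_\infty$ up to a power (via $\beta_\infty^{1+s/p}\lesssim\beta_p$ with $p=2$, say). This yields $\beta_{2}(x,r)\ge\delta_1:=c(C_0)\,\delta_0$ at all relevant points and scales inside a large sub-window. We then run the proof of Proposition \ref{prop:stopping} (Lemmas \ref{lower-bound}, \ref{lower-bound1}, \ref{lemma:key_estimate} and the averaging of Subsection \ref{11.57}) with $\delta_1$ in place of $\delta_0$. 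Those arguments only use the non-flatness on the cubes of the window $\mathcal{D}_{j,j+N}$ for the chosen $j$'s inside $\tilde Q_0$, not globally. This gives a contradiction provided $N\approx_{C_0}\delta_1^{-2}$ and $N\le 1/(1-s)$, that is, $1-s\le c(C_0)\delta_0^2$. This is the threshold $s_0=1-c\delta_0^2$. Here the CDC constant depends only on $C_0$ by Lemma \ref{5.48}.

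The main obstacle is the bookkeeping of the case split. Case 2 must hold uniformly on a whole window of $\approx N+j_0$ generations, whereas Case 1 only needs one bad ball. So the dichotomy must be stated as "either some ball in the window is $\beta_\infty$-flat (use Lemma \ref{6.46}) or none is (use the $\beta_2$ machinery)", and $m_1$ must be taken as the maximum of the two generation counts, $m_1\approx j_0+N+\log(1/(\kappa\delta_0))+C_0^2$. A second delicate point is that the power in Lemma \ref{beta_infty} should be exactly $1$ (quadratic $\delta_0$ dependence). If that lemma only gives a worse power, I would exploit the $\beta_{\rm hole}$ term in Case 2 as well, or lower the flatness cutoff $\tau\delta_0$ accordingly.
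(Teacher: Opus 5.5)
Your flat regime (your Case 1) is fine. It is in fact slightly cleaner than the paper's treatment. Under \eqref{eq:ratio_bound}, the ratios $\omega/\mu$ of any two cubes in the window differ by at most a factor $16^2$, so the jump of size $M$ produced by Lemma \ref{6.46} is an immediate contradiction once $M$ is large depending on $C_0$. For $B_2$ you should use a cube of comparable size contained in $B_2$ rather than a cover, but that is cosmetic. The paper instead runs the computation of Lemma \ref{lem:decay} at the intermediate cube $Q_2$ and passes to $Q$ by Cauchy--Schwarz.

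\textbf{The gap is in your Case 2.} Lemma \ref{beta_infty} is not a pointwise inequality, and no pointwise bound $\beta_2\gtrsim_{C_0}\beta_\infty$ holds. Suppose $E\cap B(x,r)$ lies on a line except for a cluster of diameter $\approx\beta r$ at distance $\beta r$ from it. Then $\beta_\infty(x,r)\approx\beta$, but $\beta_2(x,r)^2\approx r^{-s}(\beta r)^s\beta^2=\beta^{2+s}$. Ahlfors regularity shows this is the worst case, so the correct pointwise relation is $\beta_2\gtrsim_{C_0}\beta_\infty^{1+s/2}\approx\beta_\infty^{3/2}$. This is exactly the pointwise route in the paper's remark after Theorem \ref{thm:main_planar}, which is why that remark gets only $\delta_0^{n+\varepsilon}$. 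Imposing $\beta_\infty\ge\tau\delta_0$ on every ball of the window does not help: one can displace a piece of relative size $\delta_0$ inside every ball at every scale.

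Your own formula with $p=2$ therefore gives $\delta_1\approx\delta_0^{3/2}$, not $c(C_0)\delta_0$. Feeding this into Proposition \ref{prop:stopping} requires $N\approx\delta_0^{-3}$. Together with $N\le 1/(1-s)$, you would only reach $s_0=1-c\delta_0^3$, which does not cover the range $s>1-c\delta_0^2$ in the hypotheses. Your fallbacks do not fix this. Lowering the cutoff only decreases $\delta_1$, and in Case 2 nothing controls $\beta_{\operatorname{hole}}$.

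\textbf{The missing idea} is that the machinery of Proposition \ref{prop:stopping} never uses $\beta_2$ pointwise, only its sum over the window.
\begin{itemize}
\item Under \eqref{eq:ratio_bound}, the weights $\Theta_\omega^n(P)^2\,\omega(P)/\mu(P)$ in Lemma \ref{lower-bound} are essentially constant on $\mathcal{D}_{0,N}(R)$. This holds up to $C_0$-constants and the factor $\ell_0^{-2N(1-s)}\lesssim 1$.
\item So it suffices to know $\sum_{P\in\mathcal{D}_{0,N}(R)}\beta_{2,\mu}^s(P)^2\ell(P)^s\gtrsim_{C_0}\delta_0^2N\ell(R)^s$.
\item The actual Lemma \ref{beta_infty} is the Carleson-type bound $\sum_{\mathcal{D}_{0,N}(R)}\beta_\infty(P)^2\ell(P)^s\lesssim_{C_0}\sum_{\mathcal{D}_{0,N}(R)}\beta_{1,\mu}^s(P)^2\ell(P)^s+\ell(R)^s$. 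It is proved by chaining best-approximating lines (Lemma \ref{16.09}) and Cauchy--Schwarz.
\item Combine it with $\beta_{2,\mu}^s\gtrsim_{C_0}\beta_{1,\mu}^s$ and $\beta_\infty(P)\ge\tau\delta$ on the window. This gives the sum $\gtrsim_{C_0}\delta_0^2N\ell(R)^s-C\ell(R)^s$.
\item The additive error is absorbed because $N\gtrsim_{C_0}\delta_0^{-2}$ is required anyway.
\end{itemize}
This is Claim \ref{2.54}. With it replacing your pointwise $\delta_1$, your Case 2 goes through and yields the quadratic threshold.
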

\begin{proof}We proceed by a case analysis on the $L^\infty$-flatness of the descendants of $Q$. Let $m_0$ be the generation gap given by Proposition \ref{prop:stopping}, and let $\tau > 0$ be the constant from Lemma \ref{6.46}. We introduce a parameter $\delta \approx \delta_0$, which will be specified later.

\textbf{Case 1:} Suppose for all $P \in \mathcal{D}_{0,m_0}(Q)$, we have
\begin{equation}\label{non-flat beta infty}
  \beta_{\infty}(P) \ge \tau\delta.  
\end{equation}

As in the proof of Proposition \ref{prop:stopping}, our goal is to show that there exists a cube $P_0\in \mathcal{D}_{0,m_0}(Q) $ such that either
\[
\frac{\omega(P_0)}{\mu(P_0)}\;\ge\; 16\,\frac{\omega(Q)}{\mu(Q)}\qquad\text{or}\qquad 
\frac{\omega(P)}{\mu(P)}\;\le\; 16^{-1}\,\frac{\omega(Q)}{\mu(Q)}.
\]  
Suppose for the sake of contradiction that this is not the case. Then, for all $P\in \mathcal{D}_{0,m_0}(Q) $
we have
\begin{equation}\label{eq:ratio_bound1}
16^{-1}\,\frac{\omega(Q)}{\mu(Q)}\;<\;\frac{\omega(P)}{\mu(P)}\;<\;16\,\frac{\omega(Q)}{\mu(Q)}.
\end{equation}
 
\begin{claim}\label{2.54}
  Fix $R\in\mathcal{D}_{0,\,m_0-N}(Q)$ and under the assumptions of \eqref{non-flat beta infty} and \eqref{eq:ratio_bound1}, we have 
   \[
   \delta_0^2 \ell(R)^{2(s-1)}\, \!\!\sum_{P\in\mathcal{D}_{0,N}(R)}\!\!\Theta_\omega^s(P)^2\omega(P)
   \lesssim_{C_0} \int_{4B_R} \int_t^{12\ell(R)} \beta_{2,\omega}^1(x,r)^2 \,\Theta_\nu^1(x,2r)\,\frac{dr}{r}\,d\nu(x).
   \]
\end{claim}
\begin{proof}[Proof of Claim \ref{2.54}]

First, recalling the assumption \eqref{eq:ratio_bound1} and proceeding as in the proof of Lemma
\ref{lower-bound} and \eqref{eq:ratio_bound1}, we obtain 
\[
\begin{aligned}
\int_{4B_R} &\int_t^{12\ell(R)} \beta_{2,\omega}^1(x,r)^2 \,\Theta_\nu^1(x,2r)\,\frac{dr}{r}\,d\nu(x) \\
&\!\!\!\!\gtrsim_{C_0} \Theta_\omega^s(R)^2 \ell(R)^{2(s-1)}\frac{\omega(R)}{\mu(R)} \sum_{P\in\mathcal{D}_{0,N}(R)} \beta_{2,\mu}^s(P)^2 \ell(P)^s- C\epsilon^2 N \,\Theta_\omega^s(R)^2 \ell(R)^{2(s-1)} \omega(R).
\end{aligned}
\]

Note that \[\beta_{2,\mu}^s(P)^2\gtrsim_{C_0} \beta_{1,\mu}^s(P)^2\]
and combine with 
the inequality in Lemma \ref{beta_infty} gives the lower bound
\[
\sum_{P\in\mathcal{D}_{0,N}(R)} \beta_{2,\mu}^s(P)^2 \ell(P)^s \gtrsim_{C_0} \sum_{P\in\mathcal{D}_{0,N}(R)} \beta_{\infty}(P)^2 \ell(P)^s - \ell(R)^s.
\]
Substituting this into the previous estimate, we obtain
\[
\begin{aligned}
\int_{4B_R} \int_t^{10\ell(R)} \beta_{2,\omega}^1(x,r)^2 &\,\Theta_\nu^1(x,2r)\,\frac{dr}{r}\,d\nu(x)\\
&\gtrsim_{C_0} \Theta_\omega^s(R)^2 \ell(R)^{2(s-1)} \frac{\omega(R)}{\mu(R)}
\Bigl( \sum_{P\in\mathcal{D}_{0,N}(R)} \beta_{\infty}(P)^2 \ell(P)^s - \ell(R)^s \Bigr) \\
&\qquad - C\epsilon^2 N \,\Theta_\omega^s(R)^2 \ell(R)^{2(s-1)} \omega(R).
\end{aligned}
\]

Now, the  assumption \eqref{non-flat beta infty} implies $\beta_{\infty}(P)\ge \tau\delta \gtrsim \delta_0$ for every $P\in\mathcal{D}_{0,\,N}(R)$. Using the regularity property $\mu(P) \approx_{C_0} \ell(P)^s$, the first sum is bounded below by $\delta_0^2 \sum_{P\in\mathcal{D}_{0,N}(R)} \mu(P)$. Because the cubes $\mathcal{D}_{0,N}(R)$ form a partition of $R$ at the $N$-th generation, we have $\sum_{P\in \mathcal{D}_{0,N}(R)} \mu(P) \approx N \mu(R)$. Hence the dominant positive term is proportional to $\delta_0^2 N \,\Theta_\omega^s(R)^2 \ell(R)^{2(s-1)} \omega(R)$.

To absorb the negative terms, we choose $\epsilon$ so that $\epsilon = c \delta_0$ with a sufficiently small constant $c>0$; then the penalty term $C\epsilon^2 N$ is dominated by $\delta_0^2 N$. Additionally, we take $N$ large enough that $\delta_0^2 N \gtrsim_{C_0} 1$, which allows us to absorb the $\ell(Q)^s$ term. Under these choices, the lower bound simplifies to
\[
\int_{4B_R} \int_t^{12\ell(R)} \beta_{2,\omega}^1(x,r)^2 \,\Theta_\nu^1(x,2r)\,\frac{dr}{r}\,d\nu(x)
\gtrsim_{C_0} \delta_0^2 \,\Theta_\omega^s(R)^2 \ell(R)^{2(s-1)} \frac{\omega(R)}{\mu(R)} \sum_{P\in\mathcal{D}_{0,N}(R)} \mu(P).
\]

Finally, we use again \eqref{eq:ratio_bound} to rewrite the right‑hand side as
\[
\Theta_\omega^s(R)^2 \frac{\omega(R)}{\mu(R)}\sum_{P\in\mathcal{D}_{0,N}(R)} \mu(P)
\approx_{C_0} \sum_{P\in\mathcal{D}_{0,N}(R)} \Theta_\omega^s(P)^2 \omega(P).
\]
This establishes the desired inequality and completes the proof.
\end{proof}

Therefore, by invoking Claim \ref{2.54} in place of Lemma \ref{lower-bound1}, in Case 1 the desired estimate \eqref{3.17} follows directly from the arguments used in the proof of Proposition \ref{prop:stopping}, in combination with Lemma \ref{lem:decay}.

\textbf{Case 2:} Suppose conversely that there exists at least one cube $P_0 \in \mathcal{D}_{0,m_0}(Q)$
such that
$$\beta_{\infty}(P_0) < \tau\delta.$$
By the scaling properties of dyadic cubes (Lemma \ref{dyadic-cube}), there exists an associated ball $\tilde{B}_{P_0}$ concentric with $P_0$, with $\tilde{B}_{P_0}\cap E\subset P_0$, with radius comparable to the side length of $P_0$, and such that $\beta_{\infty}(\tilde{B}_{P_0}) \lesssim \beta_{\infty}(P_0)$. Recall from the hypothesis of Theorem \ref{thm:main_planar} that 
$\beta_{\infty}(\tilde{B}_{P_0})+ \beta_{\operatorname{hole}}(\tilde{B}_{P_0})\geq \delta_0$. Thus, by choosing $\delta$ sufficiently small relative to $\delta_0$, the upper bound on $\beta_{\infty}(\tilde{B}_{P_0})$ forces the topological term $\beta_{\operatorname{hole}}(\tilde{B}_{P_0})$ to be large. Specifically, we obtain$$\beta_{\infty}(\tilde{B}_{P_0}) \le \tau\delta \quad \text{and} \quad \beta_{\operatorname{hole}}(\tilde{B}_{P_0}/10) \ge \delta.$$
Applying Lemma \ref{6.46}, there exist balls $B_1 \subset B_2 \subset \tilde{B}_{P_0}$ with radii $r(B_i) \gtrsim \delta \ell(P_0)$ such that
\[
\frac{\omega(B_1)}{r(B_1)^s} > M\frac{\omega(B_2)}{r(B_2)^s}.
\]
We now transfer this estimate from balls to dyadic cubes. By the completeness and nesting properties of the dyadic grid, we can find dyadic cubes $Q_1$ and $Q_2$ such that $Q_1 \subset B_1 \subset B_2 \subset Q_2 \subset P_0$ with comparable sizes: $\ell(Q_1) \approx r(B_1)$ and $\ell(Q_2) \approx r(B_2)$. Due to the Scaling property of Lemma \ref{dyadic-cube} and the fact that $E$ is an $(s, C_0)$-Ahlfors regular compact set, $r(B_i)^s$ is  comparable to $\mu(Q_i)$ up to a constant depending  on $C_0$. Consequently, 
\[
\frac{\omega(Q_1)}{\mu(Q_1)} \ge c(C_0) M \frac{\omega(Q_2)}{\mu(Q_2)}.
\]
 Since Lemma \ref{6.46} allows us to choose the constant $M$ to be arbitrarily large depending only on $C_0$,  then this guarantees the existence of cubes $Q_1$ and $Q_2$ with $Q_1 \subset Q_2 \subset P_0$ satisfying the desired density jump:
\[
\frac{\omega(Q_1)}{\mu(Q_1)} \ge 16\frac{\omega(Q_2)}{\mu(Q_2)}.
\]
Furthermore, the side lengths satisfy $\ell(Q_1) \approx_{C_0} \ell(Q_2)$ and $\ell(Q_2) \gtrsim_{C_0} \delta \ell(P_0)$. In terms of dyadic generations, we may assume that $Q_1 \in \mathcal{D}_{N_1}(Q_2)$ and $Q_2 \in \mathcal{D}_{N_2}(P_0)$, where $N_1 \approx_{C_0} 1$ and $N_2 \lesssim |\log \delta|$.

With $Q_1$ and $Q_2$ at hand, we proceed by applying the argument from Lemma \ref{lem:decay} to the cube $Q_2$, which yields$$\sum_{P \in \mathcal{D}_{ N_1}(Q_2)} \omega(P)^{1/2} \mu(P)^{1/2} \le \gamma' \omega(Q_2)^{1/2}\mu(Q_2)^{1/2},$$where $\gamma' = 1 - c\ell_{0}^{N_1s}$.

At this stage, we fix the total generation gap $m_1 \approx m_0 + |\log \delta_0| + N_1$.

To establish the bound for $Q$, we decompose the sum over $P \in \mathcal{D}_{ m_1}(Q)$ into those cubes contained in $Q_2$ and those in $Q \setminus Q_2$:
\begin{align*}
    \sum_{P \in \mathcal{D}_{ m_1}(Q)} & \omega(P)^{1/2} \mu(P)^{1/2} \\
    &= \Bigg( \sum_{P \in \mathcal{D}_{ m_1}(Q)\cap \mathcal{D}_{\mu}(Q_2) } + \sum_{P \in \mathcal{D}_{ m_1}(Q) \setminus \mathcal{D}_{\mu}(Q_2)} \Bigg) \omega(P)^{1/2} \mu(P)^{1/2} \\
    &\le \gamma' \omega(Q_2)^{1/2} \mu(Q_2)^{1/2} + \omega(Q \setminus Q_2)^{1/2} \mu(Q \setminus Q_2)^{1/2} \\
    &\le \Big( \omega(Q \setminus Q_2) + \gamma' \omega(Q_2) \Big)^{1/2} \Big( \mu(Q \setminus Q_2) + \gamma' \mu(Q_2) \Big)^{1/2} \\
    &= \Big( \omega(Q) - (1-\gamma')\omega(Q_2) \Big)^{1/2} \Big( \mu(Q) - (1-\gamma')\mu(Q_2) \Big)^{1/2} \\
    &\le \omega(Q)^{1/2} \mu(Q)^{1/2} \Big( 1 - (1-\gamma') \frac{\mu(Q_2)}{\mu(Q)} \Big)^{1/2},
\end{align*}
Here, the second inequality follows from the Cauchy-Schwarz inequality applied to vectors in $\mathbb{R}^2$, and in the final step, we used the trivial bound $\omega(Q) - (1-\gamma')\omega(Q_2) \le \omega(Q)$.

Recall that the Ahlfors regularity implies the  bound $\frac{\mu(Q_2)}{\mu(Q)} \gtrsim \ell_0^{s(m_0+c|\log \delta_0|)}$. Thus, we obtain the desired estimate \eqref{3.17} by setting$$\gamma = \Big( 1 - c'(1-\gamma') \ell_0^{s(m_0+c|\log \delta_0|)} \Big)^{1/2} \in (0,1),$$which concludes the proof.
\end{proof}
With Proposition \ref{prop:stopping1} established, the proof of Theorem \ref{thm:main_planar} is complete. Observe that the decay estimate \eqref{3.17} obtained here is structurally identical to the estimate established in Lemma \ref{lem:decay}, with the generation gap $m_0$ simply replaced by the new parameter $m_1$. Since $m_1$ depends only on the structural constants $\delta_0$ and $C_0$, we can proceed by applying the exact same dimension drop argument used in Section \ref{11.28}. This immediately yields Theorem \ref{thm:main_planar}.

\appendix

\section{Proof of the regularised measure inequality} 
We work with a regularised measure $\nu:=\omega|_{3B_0}*\varphi_t$, where $\varphi_t$ is a standard mollifier at scale $t>0$ and $B_0$ is a fixed ball.

\begin{lemma}\label{lem:beta-inequality}
Fix $x\in B_0$ and assume $2t<r$. Then  
\[
\beta_{2,\nu}^n(x,2r)^2 \gtrsim \beta_{2,\omega}^n(x,r)^2 - \bigl(\tfrac{t}{r}\bigr)^2\Theta^n_\omega(x,3r).
\]
\end{lemma}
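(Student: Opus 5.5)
Fix an $n$-plane $L$ that (almost) minimises $\beta_{2,\nu}^n(x,2r)$. The plan is to test $\beta_{2,\omega}^n(x,r)$ against this same plane $L$ and compare the two integrals. Write $d(y)=\operatorname{dist}(y,L)$ and choose $\varphi_t$ radial, nonnegative, supported in $B(0,t)$, with $\int\varphi_t=1$. Since $x\in B_0$ and $r<2\operatorname{rad}(B_0)$ in the range used, we will treat $\omega$ on $B(x,r)$ as $\omega|_{3B_0}$ on $B(x,r)$; if needed, $r$ is restricted so that $B(x,2r)\subset 3B_0$, and otherwise one replaces $\omega$ by $\omega|_{3B_0}$ throughout.

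\emph{Step 1 (Fubini).} By Fubini,
\[
\int_{B(x,2r)} d(y)^2\,d\nu(y)=\int\Big(\int_{B(x,2r)} d(y)^2\varphi_t(y-z)\,dy\Big)d\omega|_{3B_0}(z).
\]
For $z\in B(x,r)$ we have $B(z,t)\subset B(x,2r)$ because $t<r/2$. So the inner integral equals $(d^2*\check\varphi_t)(z)$, and it suffices to restrict the outer integration to $z\in B(x,r)$, which only decreases the right-hand side.

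\emph{Step 2 (pointwise comparison).} For $|y-z|<t$ we have $d(y)\ge d(z)-t$, hence $d(y)^2\ge \tfrac12 d(z)^2-t^2$. Averaging against $\varphi_t$ gives
\[
(d^2*\varphi_t)(z)\ge \tfrac12 d(z)^2-t^2 .
\]
One can do slightly better: since $d^2$ is a quadratic function of the normal coordinate, $d^2*\varphi_t=d^2+c\,t^2\ge d^2$ exactly. Either version suffices. Therefore
\[
\int_{B(x,2r)} d^2\,d\nu\ \ge\ \tfrac12\int_{B(x,r)} d(z)^2\,d\omega(z)-t^2\,\omega(B(x,r)).
\]

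\emph{Step 3 (normalisation).} Divide by $(2r)^{n+2}$ and use the definition of $\beta_{2,\omega}^n(x,r)$ with the particular plane $L$, which only increases the integral relative to the infimum. This gives
\[
\beta_{2,\nu}^n(x,2r)^2\ \gtrsim\ \beta_{2,\omega}^n(x,r)^2-\Big(\frac tr\Big)^2\frac{\omega(B(x,r))}{r^n}.
\]
Finally, $\omega(B(x,r))\le \omega(B(x,3r))$, so the error term is bounded by $(t/r)^2\Theta^n_\omega(x,3r)$. With the exact identity $d^2*\varphi_t\ge d^2$ the error term even disappears, but the stated weaker form is all that is required.

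The only delicate point is the localisation. We need the mass of $\omega$ in $B(x,r)$ to coincide with that of $\omega|_{3B_0}$, and the mollified mass to stay inside $B(x,2r)$. The condition $2t<r$ handles the second, and the containment $B(x,r)\subset 3B_0$ handles the first (otherwise the statement is read with $\omega|_{3B_0}$). Everything else is the elementary Fubini-plus-triangle-inequality computation above.
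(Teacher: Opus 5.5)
Your proof is correct, but it follows a different route from the paper's. The paper fixes a plane $L$ meeting $B(x,2r)$ and introduces a smooth cutoff $\Phi$ with $\chi_{B(x,r)}\le\Phi\le\chi_{B(x,2r)}$. It then compares $\int g\,d\nu$ with $\int g\,d\omega$ for $g=\operatorname{dist}(\cdot,L)^2\Phi$ via a second-order Taylor expansion. Evenness of $\varphi_t$ removes the linear term, and $|\nabla^2 g|\lesssim 1$ gives $|g*\varphi_t-g|\lesssim t^2$. The Hessian bound is where $L\cap B(x,2r)\neq\varnothing$ is needed, so that $\operatorname{dist}(\cdot,L)\lesssim r$ on $\operatorname{supp}\Phi$. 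This is a symmetric $O(t^2)$ comparison. However, because $\Phi$ is squeezed between the two indicators, only the one-sided bound $\int_{B(x,2r)}\operatorname{dist}(y,L)^2\,d\nu\ge\int_{B(x,r)}\operatorname{dist}(y,L)^2\,d\omega-Ct^2\,\omega(B(x,3r))$ transfers back to the ball integrals. That is the direction the lemma needs.

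You work one-sidedly from the outset. Fubini together with $B(z,t)\subset B(x,2r)$ for $z\in B(x,r)$ reduces everything to the pointwise bound $d^2*\check\varphi_t\ge\frac12 d^2-t^2$. This uses no cutoff, no derivative bounds, and no restriction on $L$. Your further remark is that $d^2$ is a quadratic in the normal coordinate, so $d^2*\varphi_t=d^2+ct^2\ge d^2$ for radial $\varphi_t$ (and $\ge d^2$ for any centred mollifier, by Jensen). This shows the error term is actually superfluous: $\beta^n_{2,\omega|_{3B_0}}(x,r)^2\le 2^{n+2}\beta^n_{2,\nu}(x,2r)^2$.

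You also make explicit the identification of $\omega$ with $\omega|_{3B_0}$ on $B(x,r)$, which the paper leaves implicit. This is valid in the application, since $x\in 4B_Q\subset B_0$ and $r\le 12\ell(Q)<2\operatorname{rad}(B_0)=98\ell(Q)$. The Taylor-expansion argument is the more portable template when one wants two-sided control of general smooth test integrals. For this lemma, though, your route is shorter and gives a sharper conclusion.
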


\begin{proof}
By definition of $\beta$, it suffices to show that for any $n$-plane $L$ intersecting $B(x,2r)$,  
\[
\Bigl|\int_{B(x,2r)}\operatorname{dist}(y,L)^2\,d\nu(y)-\int_{B(x,r)}\operatorname{dist}(y,L)^2\,d\omega(y)\Bigr|\lesssim t^2\,\omega(B(x,3r)).
\]
Choose a smooth cutoff $\Phi\in C_c^\infty(B(x,2r))$ with  
\[
\Phi\equiv1\ \text{on}\ B(x,r),\qquad |\nabla\Phi|\lesssim 1/r,\qquad |\nabla^2\Phi|\lesssim 1/r^2. \tag{1}
\]
Define $g(y):=\operatorname{dist}(y,L)^2\Phi(y)$. Then  
\[
\Bigl|\int g\,d\nu-\int g\,d\omega\Bigr| = \Bigl|\int g\,d\omega-\int(g*\varphi_t)\,d\omega\Bigr|
\le \int|g-g*\varphi_t|\,d\omega.
\]
For a fixed $z$, write  
\[
(g*\varphi_t)(z)-g(z)=\int[g(z-y)-g(z)]\varphi_t(y)\,dy.
\]  
Using the second‑order Taylor expansion,  
\[
g(z-y)-g(z)=-\nabla g(z)\cdot y+\frac12 y^T\nabla^2g(\xi)y,
\]  
and the evenness of $\varphi_t$ eliminates the linear term. Because $\operatorname{supp}\varphi_t\subset B(0,t)$, we have $|y|\le t$, hence  
\[
|(g*\varphi_t)(z)-g(z)|\lesssim t^2\sup_{\xi\in B(z,t)}\|\nabla^2g(\xi)\|\lesssim t^2,
\]  
where the last estimate uses (1). Since $\operatorname{supp}g\subset B(x,2r)$ and $2t<r$, the difference $g-g*\varphi_t$ is supported in $B(x,2r+t)\subset B(x,3r)$. Integrating the pointwise bound yields the desired estimate.
\end{proof}

\section{Technical estimates for the planar case}

The purpose of this appendix is to establish the technical estimates required for the planar case. The proof follows the same lines as the argument  in \cite[Chapter 7]{T14}. We adapt the original proof, which was established for $1$-Ahlfors regular sets, to the $s$-Ahlfors regular sets. The full details are provided for the reader's convenience.

Assume that $E \subset \mathbb{R}^2$ is an $(s, C_0)$-Ahlfors regular compact set with $s\in(0, 1]$. Recall that for any $Q\in\mathcal{D}_\mu (E),$ we define
 $$\beta_\infty(Q) := \inf_L \sup_{y \in Q} \frac{\operatorname{dist}(y, L)}{\ell(Q)},$$
 \[
 \beta_{1,\mu}^s(Q) := \inf_L  \frac{1}{\ell(Q)^s} \int_{Q}  \frac{\operatorname{dist}(y, L)}{\ell(Q)}  d\mu(y),
 \]
 where in both cases the infimum are taken over all  affine lines $L \subset \mathbb{R}^{2}.$
 
\begin{lemma}\label{16.09} Let $Q,R\in\mathcal{D}_{\mu}(E)$ with $Q\subset R$ and let $L_{R}$ be a line which minimizes $\beta_{1,\mu}^s(R).$
 Then for any $x\in Q,$  the following holds
  \[
  \operatorname{dist}(x,L_R)\lesssim_{C_0}
  \sum_{P\in \mathcal{D}_{\mu}:Q\subset P\subset R}\beta_{1,\mu}^s (P)\ell(P) + \ell(Q).
  \]
\end{lemma}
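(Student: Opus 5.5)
We follow the standard telescoping argument from \cite[Chapter 7]{T14}, adapted to $s$-Ahlfors regular measures. Let $Q=P_0\subset P_1\subset\cdots\subset P_k=R$ be the chain of dyadic ancestors of $Q$ up to $R$. For each $P_i$ we let $L_i$ be a line minimizing $\beta_{1,\mu}^s(P_i)$, so that $L_k=L_R$. We then split
\[
\operatorname{dist}(x,L_R)\le \operatorname{dist}(x,L_0)+\sum_{i=1}^{k}\bigl(\text{distance between } L_{i-1} \text{ and } L_i \text{ near } x\bigr).
\]
Here the distance between two lines is measured in a ball around $x$ of radius comparable to $\ell(P_i)$.

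\textbf{Step 1: the base term.} We claim $\operatorname{dist}(x,L_0)\lesssim_{C_0}\beta_{1,\mu}^s(Q)\ell(Q)+\ell(Q)$. Since $x\in Q\subset B_Q$ and $\mu(Q)\gtrsim_{C_0}\ell(Q)^s$, Chebyshev's inequality gives a point $y\in Q$ with $\operatorname{dist}(y,L_0)\lesssim_{C_0}\beta_{1,\mu}^s(Q)\ell(Q)$. Then $|x-y|\lesssim\ell(Q)$.

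\textbf{Step 2: comparing consecutive lines.} Fix $P=P_{i-1}\subset P'=P_i$. Both lines $L_{i-1}$ and $L_i$ approximate $\mu|_P$ well in $L^1$:
\[
\frac1{\mu(P)}\int_P\bigl(\operatorname{dist}(y,L_{i-1})+\operatorname{dist}(y,L_i)\bigr)\,d\mu(y)\lesssim_{C_0}\bigl(\beta_{1,\mu}^s(P)+\beta_{1,\mu}^s(P')\bigr)\ell(P').
\]
This uses $\mu(P)\approx_{C_0}\mu(P')\approx\ell(P)^s$, since $\ell(P')=\ell_0^{-1}\ell(P)$. The key geometric claim is that there exist two points $y_1,y_2\in P$ with $|y_1-y_2|\gtrsim_{C_0}\ell(P)$, each at distance $\lesssim_{C_0}(\beta(P)+\beta(P'))\ell(P')$ from both lines. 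Two such well-separated points pin down both lines. It follows that within $B(x,C\ell(P'))$ we have
\[
\sup_{z\in L_{i-1}\cap B(x,C\ell(P'))}\operatorname{dist}(z,L_i)\lesssim_{C_0}\bigl(\beta(P)+\beta(P')\bigr)\ell(P').
\]

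\textbf{Step 3: the main obstacle.} The delicate point is finding well-separated good points. A set of $\mu$-measure $\ge\frac12\mu(P)$ in $P$ must have diameter $\gtrsim_{C_0}\ell(P)$. Indeed, if it were contained in a ball of radius $\rho$, Ahlfors regularity would give $\rho^s\gtrsim_{C_0}\ell(P)^s$. This argument works uniformly for $s\in(0,1]$, but the constants degrade with $C_0$, not with $s$, since $\rho\gtrsim C_0^{-2/s}\ell(P)$. For $s$ bounded below, for instance $s\ge 2/3$ as used later, this is harmless. Alternatively, we keep the dependence on $s$ implicit in $\lesssim_{C_0}$.

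\textbf{Step 4: propagating back to $x$.} To transfer the estimate of Step 2 back to $x$, note that $x$ is at distance $\lesssim\ell(P_i)$ from $P_{i-1}$, where the two lines are close. The comparison between two lines at a point is linear in the distance from the pinning points, so the error at $x$ is $\lesssim_{C_0}(\beta(P_{i-1})+\beta(P_i))\ell(P_i)$. Summing over $i$ and using $\ell(P_{i})\approx\ell(P_{i-1})$ gives
\[
\operatorname{dist}(x,L_R)\lesssim_{C_0}\sum_{Q\subset P\subset R}\beta_{1,\mu}^s(P)\ell(P)+\ell(Q).
\]
Each $\beta$ appears at most twice in this sum, which is the claimed bound.
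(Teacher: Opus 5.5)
Your proof is correct, but it organizes the telescoping differently from the paper.

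\textbf{The paper's route.} Following Tolsa's book, the paper works with ancestors separated by a large generation gap $n_0$. It stops at the first cube $Q_N$ with $\beta_{1,\mu}^s(Q_N)\ge\varepsilon_0$ and moves a point $a_N\in Q_N$ up the chain by iterated orthogonal projections $a_m$ onto the lines $L_{Q_m}$. Because these points could drift, it needs the induction $a_m\in 5B(Q_m)$, which uses $\beta_{1,\mu}^s(Q_m)<\varepsilon_0$ above the stopping cube. All scales below $Q_N$ are then absorbed at once via $\ell(Q_N)\le\varepsilon_0^{-1}\beta_{1,\mu}^s(Q_N)\ell(Q_N)$.

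\textbf{Your route.} You telescope $\operatorname{dist}(x,L_i)$ for the fixed point $x$, which lies in every $P_i$. There is no drift, so neither the stopping time nor the gap $n_0$ is needed. Both proofs rest on the same fact: consecutive best lines are close on a ball of comparable size, via two well-separated points near both lines (Claim \ref{23.23} in the paper, your Step 2). The paper only invokes this comparison where $\beta$ is small, so its pinning is literal. Yours is shorter but must handle large $\beta$ scale by scale, and two points should be made explicit.

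\textbf{First point: localization.} Write the step as $\operatorname{dist}(x,L_i)\le\operatorname{dist}(x,L_{i-1})+\operatorname{dist}(\pi_{i-1}(x),L_i)$, where $\pi_{i-1}(x)$ is the projection of $x$ onto $L_{i-1}$. Step 4 does not need that $x$ is close to $P_{i-1}$; it lies in it. It needs $\pi_{i-1}(x)\in B(x,C\ell(P_i))$ with $C=C(C_0)$. This follows from your Step 1 applied to $P_{i-1}$, plus the trivial bound $\beta_{1,\mu}^s(P)\le\mu(P)/\ell(P)^s\lesssim C_0$, obtained by taking $L$ through $z_P$.

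\textbf{Second point: large $\beta$.} Two points at mutual distance $c(C_0)\ell(P_{i-1})$ pin down the lines only if $(\beta_{1,\mu}^s(P_{i-1})+\beta_{1,\mu}^s(P_i))\ell(P_i)$ is small compared with that separation. In the complementary case, use $\operatorname{dist}(\pi_{i-1}(x),L_i)\le\operatorname{dist}(x,L_{i-1})+\operatorname{dist}(x,L_i)\lesssim_{C_0}\ell(P_i)$. This is then $\lesssim_{C_0}(\beta_{1,\mu}^s(P_{i-1})+\beta_{1,\mu}^s(P_i))\ell(P_i)$. This case split is what replaces the paper's stopping time.

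\textbf{On the $s$-dependence.} Your separation bound $\rho\gtrsim(2C_0^2)^{-1/s}\ell(P)$ does depend on $s$, contrary to your remark. It is uniform for $s$ bounded below, which is all the application needs, and the same dependence is implicit in the paper's proof.
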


\begin{proof}Let $n_0\geq 1$ be some fixed positive integer to be chosen below.
    We may assume that  $Q\in \mathcal D_{n_0N_Q}(R)$, where $N_Q$ is some positive integer. 
    Consider the sequence
of dyadic cubes $R = Q_0 \supset Q_1 \supset Q_2 \supset \cdots$ such that $x \in Q_m\in\mathcal D_\mu(E)$ for each $m \ge 1$ and
$\ell(Q_m) = \ell_0^{mn_0}\ell(R)$. Let $\varepsilon_0$ be some (small) constant that will be fixed below
too. Let $N \in [0,N_Q] $ be the least integer such that $\beta_{1,\mu}^s(Q_N) \ge \varepsilon_0$. If $N$ does not
exist because $\beta_{1,\mu}^s(Q_m) < \varepsilon_0$ for $0\le m \le N_Q$, we let $N=N_Q$. Let $a_N$ be any point from $Q_N$ and for $m = N-1, N-2, \dots, 0.$  Let $a_m$ be
the orthogonal projection of $a_{m+1}$ onto a line that minimizes $\beta_{1,\mu}^s(Q_m)$, that we
denote by $L_{Q_m}$. Then we have
\begin{equation}
\operatorname{dist}(a_N, L_R) \le \operatorname{dist}(a_N, L_{Q_N}) +
\sum_{m=0}^{N-1} |a_m - a_{m+1}|. 
\end{equation}
Our next objective consists in showing that
\begin{equation}\label{9.15}
|a_m - a_{m+1}| \lesssim_{C_0}\beta_{1,\mu}^s(Q_m)\,\ell(Q_m) \quad\text{for } m = 0, 1, \dots, N-1. 
\end{equation}
Let us see first that the desired result follows from this estimate. Indeed, from the above inequalities,  we infer that
\[
\operatorname{dist}(a_N, L_R) \lesssim_{C_0} \operatorname{dist}(a_N, L_{Q_N}) +
\sum_{m=0}^{N-1} \beta_{1,\mu}^s(Q_m)\,\ell(Q_m).
\]
If $\beta_{1,\mu}^s(Q_m) < \varepsilon_0$ for $0\le m \le N_Q$ (in this case $N=N_Q$), we let $a_N=x$ in the preceding inequality 
and since $\operatorname{dist}(a_N, L_{Q_N}) \lesssim \ell(Q)$,  Lemma \ref{16.09} follows. If $\beta_{1,\mu}^s(Q_N) \ge \varepsilon_0$, then
\[
|x - a_N| + \operatorname{dist}(a_N, L_{Q_N}) \lesssim \ell(Q_N) \le \varepsilon_0^{-1} \beta_{1,\mu}^s(Q_N)\,\ell(Q_N).
\]
Thus, 
\[
\operatorname{dist}(x, L_R) \le |x - a_N| + \operatorname{dist}(a_N, L_{Q_N}) +
\sum_{m=0}^{N-1} |a_m - a_{m+1}|
\le \varepsilon_0^{-1} \sum_{m=0}^{N} \beta_{1,\mu}^s(Q_m)\,\ell(Q_m).
\]

To prove \eqref{9.15}, we claim that
\begin{claim}\label{23.23}
For $m=0, 1, \dots, N-1,$
\begin{equation}\label{9.25}
    \operatorname{dist}_H\bigl(L_{Q_m} \cap 5B(Q_m),\; L_{Q_{m+1}} \cap 5B(Q_m)\bigr) \lesssim_{C_0} \beta_{1,\mu}^s(Q_m)\,\ell(Q_m).
\end{equation}
\end{claim}

\begin{proof}[Proof of Claim \ref{23.23}]
We define the sets of ``good'' points in $Q_{m+1}$ that are close to the respective best approximating lines:
$$A_1 := \{x \in Q_{m+1} \mid \operatorname{dist}(x, L_{Q_{m+1}}) \le K \beta_{1,\mu}^s (Q_{m+1})\ell(Q_{m+1})\}$$
and
$$A_2 := \{x \in Q_{m+1} \mid \operatorname{dist}(x, L_{Q_{m}}) \le K' \beta_{1,\mu}^s (Q_{m})\ell(Q_{m})\}.$$

By Chebyshev's inequality, we can choose the constants $K$ and $K'$ to be sufficiently large such that
$$\mu(A_1) \ge \frac{3}{4} \mu(Q_{m+1}) \quad \text{and} \quad \mu(A_2) \ge \frac{3}{4} \mu(Q_{m+1}).$$

This yields that their intersection has a large measure:
$$\mu(A_1 \cap A_2) \ge \frac{1}{2} \mu(Q_{m+1}) \gtrsim_{C_0} \ell(Q_{m+1})^s \approx \ell(Q_m)^s.$$

By the Ahlfors-David regularity of $\mu$, the intersection $A_1 \cap A_2$ cannot be concentrated in a ball of arbitrarily small radius. Therefore, we can find two points $x_1, x_2 \in A_1 \cap A_2$ such that $|x_1 - x_2| \approx \ell(Q_m)$.

Since both $x_1$ and $x_2$ are close to $L_{Q_m}$ and $L_{Q_{m+1}}$ within an error of $O(\beta_{1,\mu}^s(Q_m)\ell(Q_m))$, the relative angle and translation between the two lines are bounded by this error. This concludes the proof.
\end{proof}

Then we need to show first that $a_m \in 5B(Q_m)$ for $m = N, N-1, \dots, 1$. We argue
by backward induction. Indeed, for $m = N$, this holds by the definition of $a_N$.
Assume now that $a_{m+1} \in 5B(Q_{m+1})$ and let us see that $a_m \in 5B(Q_{m})$. Remember that
for $m = N-1, N-2, \dots, 1$, we have $\beta_{1,\mu}^s(Q_m) \le \varepsilon_0$. By the Ahlfors regularity of $\mu$,
all points $y \in Q_{m+1} \subset Q_m$ satisfy
\[
\operatorname{dist}(y, L_{Q_m}) \le C(\varepsilon_0)\,\ell(Q_m) \le \ell(Q_{m+1})/2,
\]
assuming that $\varepsilon_0$ has been taken small enough (depending also on the choice of
$n_0$). So we infer that $L_{Q_m} \cap 5B(Q_{m+1}) \neq \varnothing$. Recall that, by the induction hypothesis,
$a_{m+1} \in 5B(Q_{m+1})$. If $n_0$ has been chosen big enough we deduce that
\[
|a_{m+1} - a_m| = \operatorname{dist}(a_{m+1}, L_{Q_m}) \le \operatorname{diam}(5B(Q_{m+1})) \le \ell(Q_m). 
\]
Also, for $n_0$ big enough again, $5B(Q_{m+1}) \subset 2B(Q_{m})$, and thus $a_{m+1} \in 2B(Q_{m})$. This fact
and the above estime imply that $a_m \in 5B(Q_{m})$.

The estimate  follows now easily from  the claim using the fact that $a_m,
a_{m+1} \in 5B(Q_{m})$:
\[
|a_m - a_{m+1}| \le \operatorname{dist}_H\bigl(L_{Q_m} \cap 5B(Q_{m}),\; L_{Q_{m+1}} \cap 5B(Q_{m})\bigr) \lesssim_{C_0} \beta_{1,\mu}^s(Q_m)\,\ell(Q_m).
\]
\end{proof}

\begin{lemma}\label{beta_infty}
Fix a cube $R\in \mathcal{D}_{\mu}(E)$. Then for all integers $N \ge 1$, we have
$$\sum_{Q\in\mathcal{D}_{0,N}(R)} \beta_{\infty}(Q)^2 \ell(Q)^s \lesssim_{C_0} \sum_{Q\in\mathcal{D}_{0,N}(R)} \beta_{1,\mu}^s (Q)^2 \ell(Q)^s +\ell(R)^s.$$
\end{lemma}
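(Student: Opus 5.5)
The plan is to use Lemma \ref{16.09} to bound $\beta_\infty(Q)\ell(Q)$ pointwise by a sum of $\beta_{1,\mu}^s$ over the ancestors of sub-cubes, and then to square and sum over the tree. The argument follows \cite[Chapter 7]{T14}, with $\ell(\cdot)$ weighted by $\ell(\cdot)^s$.

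\textbf{Step 1: a pointwise bound for $\beta_\infty(Q)$.} Fix $Q\in\mathcal{D}_{0,N}(R)$ and a point $x\in Q$, and let $L_Q$ minimize $\beta_{1,\mu}^s(Q)$. Take the cube $P_x\in\mathcal{D}_{N}(R)$ with $x\in P_x\subset Q$ at the bottom generation (or one a fixed number of generations below). Applying Lemma \ref{16.09} with $R$ replaced by $Q$ gives
\[
\operatorname{dist}(x,L_Q)\lesssim_{C_0}\sum_{P:\,P_x\subset P\subset Q}\beta_{1,\mu}^s(P)\ell(P)+\ell(P_x).
\]
The bottom term $\ell(P_x)$ is harmful: it contributes $\ell_0^{N}\ell(R)$ relative to $\ell(Q)$, which is only small when $Q$ is far above generation $N$. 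A cleaner route is to apply the lemma with $P_x$ replaced by arbitrarily small cubes containing $x$, i.e.\ to let the depth tend to infinity. Since $\beta_{1,\mu}^s\lesssim_{C_0}1$, this gives
\[
\beta_\infty(Q)\lesssim_{C_0}\sup_{x\in Q}\sum_{P\ni x,\,P\subset Q}\beta_{1,\mu}^s(P)\frac{\ell(P)}{\ell(Q)}.
\]
Cubes below generation $N$ of $R$ are not in the right-hand sum of the statement, however. So I will instead truncate at generation $N$ and bound the deeper terms by $\sum_{P\ni x,\ \ell(P)<\ell_0^N\ell(R)}\ell(P)/\ell(Q)\lesssim \ell_0^{N}\ell(R)/\ell(Q)$. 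After squaring and summing against $\ell(Q)^s$, this error totals
\[
\sum_{k\le N}\ell_0^{2(N-k)}\,\#\mathcal{D}_k(R)\,\bigl(\ell_0^{k}\ell(R)\bigr)^s\lesssim_{C_0}\sum_{k}\ell_0^{2(N-k)}\ell(R)^s\lesssim\ell(R)^s,
\]
which accounts exactly for the $\ell(R)^s$ term in the statement.

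\textbf{Step 2: Cauchy--Schwarz with geometric weights.} For the main term, write $\ell(P)/\ell(Q)=\ell_0^{j}$ when $P$ lies $j$ generations below $Q$. Cauchy--Schwarz with weights $\ell_0^{j/2}$ gives
\[
\Bigl(\sum_j \beta_{1,\mu}^s(P_j)\ell_0^{j}\Bigr)^2\lesssim\sum_j\beta_{1,\mu}^s(P_j)^2\ell_0^{j}.
\]
The supremum over $x\in Q$ is then handled by bounding it with the maximum over the descendant paths, and it is convenient to replace that maximum by a sum over all descendants:
\[
\sup_x\sum_j\beta_{1,\mu}^s(P_j(x))^2\ell_0^j\le\sum_{P\subset Q}\beta_{1,\mu}^s(P)^2\frac{\ell(P)}{\ell(Q)}.
\]

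\textbf{Step 3: summation over $Q$.} Multiply by $\ell(Q)^s$ and sum over $Q\in\mathcal{D}_{0,N}(R)$, then exchange the order of summation. Each $P$ receives the factor
\[
\sum_{Q\supset P}\ell(P)\,\ell(Q)^{s-1}=\ell(P)^s\sum_{i}\ell_0^{i(1-s)},
\]
and here lies the main obstacle. For $s<1$ this geometric series converges, but with constant $\approx(1-s)^{-1}$, which is not uniform in $s$ near $1$; for $s=1$ it diverges. Replacing the sum over $P\subset Q$ by the supremum over paths avoids counting all descendants, but then each $P$ still receives contributions from all its ancestors $Q$, with weight $\ell(Q)^s\ell_0^{j}$ where $j$ is the generation gap between $Q$ and $P$. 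This equals $\ell(P)^s\ell_0^{j(1-s)}$, so the same issue remains.

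To fix this I will use the exponent $\ell_0^{2j}$ that comes from squaring, rather than $\ell_0^{j}$. In Cauchy--Schwarz, choose weights $\ell_0^{j}$ split as $\ell_0^{j/2}\cdot\ell_0^{j/2}$ differently: bound $(\sum_j b_j\ell_0^j)^2\le(\sum_j\ell_0^{j/2})\sum_j b_j^2\ell_0^{3j/2}$. The transferred weight then becomes $\ell_0^{3j/2}\ell(Q)^s=\ell(P)^s\ell_0^{j(3/2-s)}$. With $s\le1$ this series is summable uniformly in $s$, giving constants depending only on $C_0$.

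The remaining difficulty is the supremum over $x$ versus summing over all descendants, since $\#\{P\subset Q$ at gap $j\}\approx\ell_0^{-js}$. I intend to keep the supremum along a single path by choosing, for each $Q$, the point $x_Q$ (nearly) attaining $\beta_\infty(Q)$. Each $P$ is then charged only by those ancestors $Q$ whose chosen path passes through $P$, which is at most one $Q$ per generation. This yields
\[
\sum_{Q\in\mathcal{D}_{0,N}(R)}\beta_\infty(Q)^2\ell(Q)^s\lesssim_{C_0}\sum_{P\in\mathcal{D}_{0,N}(R)}\beta_{1,\mu}^s(P)^2\ell(P)^s+\ell(R)^s,
\]
with the last term coming from the truncation error in Step 1.
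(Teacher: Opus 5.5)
Your proposal is correct and is essentially the paper's argument. You apply Lemma \ref{16.09} at a near-maximizing point of $Q$ down to generation $N$, where the bottom term $\ell(S_Q)$ gives the $\ell(R)^s$ error through the same geometric sum. You then use a Cauchy--Schwarz with unequal geometric weights so the transferred weight decays in the generation gap, and exchange the order of summation. Your split gives the factor $\ell(P)^s(\ell(P)/\ell(Q))^{3/2-s}$, uniform in $s\le 1$, where the paper's split $\ell(P)^{3s/2}\cdot\ell(P)^{2-3s/2}$ gives $\ell(P)^s(\ell(P)/\ell(Q))^{s/2}$.

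Your final worry about counting descendants is unnecessary, so the single-path choice of $x_Q$ is not needed. After exchanging the sums, each $P$ only collects its ancestors $Q$, one per generation. This is why the paper can simply replace the path by the sum over all $P\in\mathcal{D}_{0,N}(R)$ with $P\subset Q$.
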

\begin{proof}
Fix $Q\in\mathcal{D}_{0,N}(R)$ and let $L_Q$ be a line that minimizes $\beta_{1,\mu}^s(Q)$. By the definition of $\beta_{\infty}(Q)$, we can find 
$x\in Q$ such that
$$\beta_{\infty}(Q)\ell(Q)\le \sup_{y \in Q} \operatorname{dist}(y, L_Q)\le 2\operatorname{dist}(x,L_Q).$$
Let $S_Q\in \mathcal D_N(R)$ with $x\in S_Q\subset Q$, so that by Lemma \ref{16.09}, 
$$\operatorname{dist}(x,L_Q)\lesssim_{C_0} \sum_{P\in \mathcal{D}_{\mu}: S_Q\subset P\subset Q}\beta_{1,\mu}^s (P)\ell(P) + \ell(S_Q).$$
Therefore, for $s\in (0, 1)$, applying the Cauchy-Schwarz inequality yields
$$\begin{aligned}
\beta_{\infty}(Q)^2 \ell(Q)^2 &\lesssim_{C_0} \!\Bigg( \sum_{P\in \mathcal{D}_{\mu}: S_Q\subset P\subset Q} \!\!\beta_{1,\mu}^s(P)^2 \ell(P)^{3s/2} \Bigg) \Bigg( \sum_{P\in \mathcal{D}_{\mu}: S_Q\subset P\subset Q}\!\! \ell(P)^{2-3s/2} \Bigg) +\ell(S_Q)^2 \\
&\lesssim_{C_0} \sum_{P\in \mathcal{D}_{\mu}: S_Q\subset P\subset Q} \beta_{1,\mu}^s(P)^2 \ell(P)^{3s/2} \ell(Q)^{2-3s/2}+\ell_0^{2N}\ell(R)^2.
\end{aligned}$$
Consequently, by interchanging the order of summation, we obtain
\begin{equation}\label{eqfj62}\begin{aligned}
   \sum_{Q\in\mathcal{D}_{0,N}(R)} \beta_{\infty}(Q)^2 \ell(Q)^s &\lesssim_{C_0} \sum_{Q\in\mathcal{D}_{0,N}(R)}
   \sum_{P\in\mathcal{D}_{0,N}(R):P\subset Q}
\beta_{1,\mu}^s(P)^2 \ell(P)^{3s/2} \ell(Q)^{-s/2}\\&\quad
+ \sum_{Q\in\mathcal{D}_{0,N}(R)}\ell(Q)^{s}\,\frac{\ell_0^{2N}\ell(R)^2}{\ell(Q)^2}\\
  & \lesssim_{C_0} \sum_{P\in\mathcal{D}_{0,N}(R)} \beta_{1,\mu}^s(P)^2 \ell(P)^{3s/2}\!\! \sum_{Q:P\subset Q\subset R} \ell(Q)^{-s/2} \\ &\quad + \sum_{Q\in\mathcal{D}_{0,N}(R)}\ell(Q)^{s}\,\frac{\ell_0^{2N}\ell(R)^2}{\ell(Q)^2}.
\end{aligned}
\end{equation}
To deal with the first term on the right hand side, notice that
$$\sum_{Q:P\subset Q\subset R } \ell(Q)^{-s/2} \lesssim \ell(P)^{-s/2}.$$
For the second term, we write
$$\sum_{Q\in\mathcal{D}_{0,N}(R)}\ell(Q)^{s}\,\frac{\ell_0^{2N}\ell(R)^2}{\ell(Q)^2} = \sum_{0\leq k\leq N}
\sum_{Q\in\mathcal{D}_{k}(R)} \ell(Q)^{s}\,\ell_0^{2N-2k}\lesssim_{C_0} \ell(R)^s.$$
Plugging the last estimates into \eqref{eqfj62}, the lemma follows.
\end{proof}




\end{document}